\renewcommand{\O}{{\mathcal O}}
\renewcommand{\bar}[1]{{\overline{#1}}}
\renewcommand{\P}{{\mathbb P}}
\newcommand{\X}{{\mathcal X}}
\newcommand{\G}{{\mathcal G}}
\newcommand{\W}{{\mathcal W}}
\newcommand{\R}{\mathbb{R}}
\newcommand{\T}{\mathbb{T}}
\renewcommand{\div}{\text{div}}
\newcommand{\N}{\mathbb{N}}
\newcommand{\E}{\mathbb{E}}
\newcommand{\Z}{\mathbb{Z}}
\renewcommand{\phi}{\varphi}
\newtheorem{theorem}{Theorem}
\newtheorem{lemma}[theorem]{Lemma}
\newtheorem{corollary}[theorem]{Corollary}
\newtheorem{proposition}[theorem]{Proposition}
\theoremstyle{definition}
\newtheorem{remark}[theorem]{Remark}
\newtheorem{definition}[theorem]{Definition}
\begin{document} 
\title[Consistency of Lipschitz learning]{Consistency of Lipschitz learning with infinite unlabeled data and finite labeled data}
\author{Jeff Calder}
\address{Department of Mathematics, University of Minnesota}
\email{jcalder@umn.edu}


\begin{abstract}
We study the consistency of Lipschitz learning on graphs in the limit of infinite unlabeled data and finite labeled data. Previous work has conjectured that Lipschitz learning is well-posed in this limit, but is insensitive to the distribution of the unlabeled data, which is undesirable for semi-supervised learning. We first prove that this conjecture is true in the special case of a random geometric graph model with kernel-based weights. Then we go on to show that on a random geometric graph with \emph{self-tuning} weights, Lipschitz learning is in fact highly sensitive to the distribution of the unlabeled data, and we show how the degree of sensitivity can be adjusted by tuning the weights. In both cases, our results follow from showing that the sequence of learned functions converges to the viscosity solution of an $\infty$-Laplace type equation, and studying the structure of the limiting equation. 
\end{abstract}

\maketitle

\section{Introduction}

In many machine learning problems, such as website classification or medical image analysis, an expert is required to label data, which may be costly, while the cost of acquiring unlabeled data can be negligible in comparison. This discrepancy has led to the development of learning algorithms that make use of not only the labeled data, but also properties of the unlabeled data in the learning task. Such algorithms are called \emph{semi-supervised} learning \cite{ssl}, as opposed to \emph{fully supervised} (uses only labeled data) or \emph{unsupervised} (uses no label information). A large class of semi-supervised learning algorithms are \emph{graph-based}, where the data is given the structure of a graph with similarities between data points, and the task is to deduce some interesting information about data in certain regions of the graph.

Let us describe a general formulation of graph-based semi-supervised learning. Let $\G=(\X,\W)$ be a weighted graph with vertices $\X$ and nonnegative edge weights  $\W=\{w(x,y)\}_{x,y\in \X}$. Assume we are given a label function  $g:\O\to \R$ where  $\O\subset \X$ are the labeled vertices. The graph-based semi-supervised learning problem is to extend the labels from  $\O$ to the remaining vertices of the graph  $\X\setminus \O$. The problem is not well-posed as stated, since there is no unique way to extend the labels. One generally makes the \emph{semi-supervised smoothness assumption}, which says that the learned labels must vary smoothly through dense regions of the graph. 
 
There are many ways to impose the semi-supervised smoothness assumption, one of the most popular and successful being Laplacian regularization \cite{zhu2003semi}, which corresponds to the optimization problem
\[\min_{u:\X\to \R}\sum_{x,y\in \X}w(x,y)^2( u(x) - u(y) )^2\ \ \text{ subject to } u(x) = g(x) \text{ for all } x \in \O. \]
It has recently been observed \cite{el2016asymptotic,nadler2009semi}  that Laplacian regularization is ill-posed in the limit of infinite unlabeled and finite labeled data. The label function $u$ degenerates into a constant label that is some type of average of the given labels. In other words, the learned function forgets about the labeled data. In \cite{el2016asymptotic}, the authors study the $p$-Laplacian regularization
\begin{equation}\label{eq:Lplearn}
\min_{u:\X\to \R}\sum_{x,y\in \X}w(x,y)^p|u(x) - u(y)|^p\ \ \text{ subject to } u(x) = g(x) \text{ for all } x \in \O 
\end{equation}
as a replacement for Laplacian regularization in the setting of few labels.
Taking (formally) $p\to \infty$ above one obtains \emph{Lipschitz learning}, which was proposed earlier in \cite{kyng2015algorithms,luxburg2004distance}. Lipschitz learning amounts to solving the problem
\begin{equation}\label{eq:Lp}
\min_{u:\X\to \R} \max_{x,y\in \X} \{w(x,y)|u(x) - u(y)|\}\ \ \text{ subject to } u(x) = g(x) \text{ for all } x \in \O. 
\end{equation}
We mention the Lipschitz learning problem \eqref{eq:Lp} does not in general have a unique solution. Roughly speaking, one can modify any minimizer $u(x)$ away from any pair $(x,y)$ that maximizes the gradient $Du(x,y):=w(x,y)|u(x)-u(y)|$ to obtain another (in fact, an infinite family) of minimizers. To fix this issue, one normally considers the unique minimizer whose gradient $Du(x,y)$ as an element of $\R^{|\X|^2}$ is smallest in the \emph{lexicographical ordering}~\cite{kyng2015algorithms}.\footnote{For two vectors $x,y\in \R^n$ that are ordered $x_1\leq x_2\leq \cdots \leq x_n$ and $y_1\leq y_2\leq \cdots \leq y_n$, we say $x\leq y$ in the \emph{lexicographical ordering} if $x_i< y_i$ at the first entry $i$ where $x_i\neq y_i$. To apply the lexicographical ordering to unordered vectors, we simply order the components of each vector from least to greatest and then apply the ordering.} This amounts to minimizing the largest gradient, and the second largest, and third largest, and so on.

The authors of \cite{el2016asymptotic} were motivated by the Lipschitz learning problem, but were unable to address it directly and instead studied the $p$-Laplace problem \eqref{eq:Lplearn} for large $p$. They showed that for random geometric graphs
\begin{equation}\label{eq:limJp}
\lim_{h\to 0^+}\lim_{n\to \infty}\frac{1}{n^2h^{d+p}}\sum_{x,y\in \X}w(x,y)^p|u(x) - u(y)|^p = C\int \rho^2 |\nabla u|^p\, dx=:J_p(u),
\end{equation}
where $n$ is the number of vertices in the graph, $\rho$ is the data density, and $u$ is a smooth function on $\R^d$. From this result, one can conjecture that solutions of the $p$-Laplace learning problem in the continuum have gradients with bounded $L^p$-norm (i.e., $\int |\nabla u|^p\, dx<\infty$), and by the Sobolev embedding theorem \cite{EvansPDE} are H\"older continuous for $p>d$. This suggests the $L^p$-learning problem is well-posed in the limit of infinite unlabeled and finite labeled data when $p>d$. The authors of \cite{el2016asymptotic} also point out that the Euler-Lagrange equation satisfied by minimizers of $J_p(u)$, defined in \eqref{eq:limJp}, appears to forget about the distribution $\rho$ of the unlabeled data as $p\to \infty$. This suggests that Lipschitz learning ($p=\infty$) is insensitive to the distribution of the unlabeled data. Our initial goal in this work was to formulate and prove this conjecture rigorously. In the course of this work, we discovered that the insensitivity to unlabeled data is a more subtle point, and crucially depends on how one selects the weights in the graph. In particular, for a particular choice of \emph{self-tuning} weights, Lipschitz learning can be made highly sensitive to the distribution $\rho$.

Let us mention that while the formal consistency result \eqref{eq:limJp}, proved in \cite{el2016asymptotic}, is suggestive, it is not sufficient to prove solutions of the graph problem converge in the continuum limit to the solution of a continuum variational problem or partial differential equation. This was addressed in follow-up works for the variational $p$-Laplacian by Slep\v cev and Thrope \cite{slepvcev2017analysis} using $\Gamma$-convergence tools, and for the game-theoretic $p$-Laplacian by Calder \cite{calder2018game} using the theory of viscosity solutions. In particular, in \cite{slepvcev2017analysis}, it was shown that one cannot take the limit as $n\to \infty$ first, and then $h\to 0^+$ afterwards, as is done in \eqref{eq:limJp}, otherwise the problem becomes again ill-posed (e.g., the solution forgets the labeled data) even for $p>d$. In fact, there is a length scale restriction $h\ll (1/n)^{1/p}$ identified in \cite{slepvcev2017analysis}, where $h$ is the bandwidth of the kernel used to define the weights (see \eqref{eq:weights_sigma}), which necessitates sending $n\to \infty$ and $h\to 0$ simultaneously.

The learning problem \eqref{eq:Lplearn} is closely related to the graph $p$-Laplacian. Indeed, we can differentiate the energy in  \eqref{eq:Lplearn} to see that any minimizer satisfies the graph $p$-Laplace equation 
\begin{equation}\label{eq:graphpLap}
\sum_{y\in \X}w(x,y)^p|u(y)-u(x)|^{p-2}(u(y)-u(x)) = 0 \ \ \ \text{ for all } x\in \X\setminus \O,
\end{equation}
subject to the Dirichlet condition $u=g$ on $\O$. Deriving the Euler-Lagrange equation for Lipschitz learning \eqref{eq:Lp} is less direct, since we seek the lexicographic minimizer. To deduce the Euler-Lagrange equation for Lipschitz learning, let us consider sending $p\to \infty$ in \eqref{eq:graphpLap}. To do this, we separate the positive and negative terms, writing \eqref{eq:graphpLap} as
\[\sum_{\substack{y\in \X \\ u(y) > u(x)}}w(x,y)^p(u(y)-u(x))^{p-1}=\sum_{\substack{y\in \X \\ u(y) \leq u(x)}}w(x,y)^p(u(x)-u(y))^{p-1}.\]
We note that both sides must have at least one term in the sum, unless $u$ is constant at all neighbors.
Since the terms in the sums on both sides are non-negative, we can take the $p^{\rm th}$ root of both sides and send $p\to \infty$ to obtain
\[\max_{y \in \X}w(x,y)(u(y)-u(x)) = \max_{y \in \X}w(x,y)(u(x)-u(y)).\]
Rearranging we get the graph $\infty$-Laplace equation
\begin{equation}\label{eq:LpEL}
\max_{y \in \X}w(x,y)(u(y)-u(x))  + \min_{y \in \X}w(x,y)(u(y)-u(x)) = 0.
\end{equation}
While this argument is formal, it can be made rigorous without much trouble, showing that solutions of the graph $p$-Laplace equation converge to solutions of the graph $\infty$-Laplace equation as $p\to \infty$. It is also possible to derive the $\infty$-Laplace equation \eqref{eq:LpEL} directly from the lexicographic minimization property, which is done in \cite{kyng2015algorithms}. 

The graph $p$-Laplace and $\infty$-Laplace equations are closely connected to their continuum counterparts in the theory of partial differential equations (PDE) \cite{lindqvist2017notes}. The continuum version of \eqref{eq:Lplearn} is the variational problem
\begin{equation}\label{eq:pDir}
\min_u \int_{\Omega} |\nabla u|^p \, dx.
\end{equation} 
The Euler-Lagrange equation satisfied by minimizers of \eqref{eq:pDir} is the $p$-Laplace equation 
\begin{equation}\label{eq:pLapEq}
\Delta_pu:=\text{div}(|\nabla u|^{p-2}\nabla u) = 0.
\end{equation}
Solutions of \eqref{eq:pLapEq} are called $p$-harmonic, and arise in problems such as nonlinear potential theory \cite{lindqvist2017notes} and stochastic tug-of-war games \cite{peres2009tug,peres2008tug,lewicka2014game}, among many other applications. We note that the divergence can be formally expanded to show that any $p$-harmonic function also satisfies
\begin{equation}\label{eq:plapexpand}
|\nabla u|^{p-2}(\Delta u + (p-2)\Delta_\infty u) = 0,
\end{equation}
where $\Delta_\infty$ is the $\infty$-Laplace operator defined for $\nabla  u\neq 0$ by
\[\Delta_\infty u := \frac{1}{|\nabla u|^{2}} \sum_{i,j=1}^d u_{x_ix_j}u_{x_i}u_{x_j}.\]
We can divide \eqref{eq:plapexpand}  by $p|\nabla u|^{p-2}$  to see that any $p$-harmonic function satisfies 
\[\tfrac{1}{p}\Delta u  + \left( 1-\tfrac{2}{p} \right)\Delta_\infty u = 0.\]
Sending $p\to \infty$ we obtain the $\infty$-Laplace equation $\Delta_\infty u = 0$, which justifies the notation. It is possible to show that solutions of $\Delta_p u =0$ converge to solutions of $\Delta_\infty u=0$ as $p\to \infty$ (see, e.g., \cite{aronsson2004tour}), however, the reader should be cautioned that the same is not true for solutions of $\Delta_p u = f$ for nonzero $f$, since the step where we cancelled the term $|\nabla u|^{p-2}$ is no longer valid (see \cite{ishii2005limits}). 

We mention it is also possible to send $p\to \infty$ in the variational problem \eqref{eq:pDir}, provided one is careful about interpreting the limit. The formal limit problem $\min_u \|\nabla u\|_{L^\infty}$ does not have unique solutions for the same reason as in the graph-based case; near any point where $|\nabla u|$ is less than the supremum, we are free to modify $u$ without changing the objective function. To resolve this in the continuum setting, one looks for minimizers that are \emph{absolutely minimal} \cite{aronsson2004tour}. A Lipschitz function $u:\Omega\to \R$ is absolutely minimal if 
\[u = v \text{ on } \partial V \implies \|Du\|_{L^\infty(V)}\leq \|Dv\|_{L^\infty(V)},\]
for each $V\subset \Omega$ open and bounded and each $v\in C(\bar{V})$. In other words, $u$ is absolutely minimal if its Lipschitz constant cannot be locally improved. It turns out that the property of being absolutely minimal is equivalent to solving the $\infty$-Laplace equation $\Delta_\infty u=0$ in the viscosity sense \cite{aronsson2004tour}. This variational interpretation of the $\infty$-Laplacian is the prototypical example of a calculus of variations problem in $L^\infty$ \cite{barron2001euler}.

In this paper, we rigorously study the consistency of Lipschitz learning in the limit where the fraction of labeled points is vanishingly small, that is, we take the limit of infinite unlabeled data and finite labeled data. We prove that Lipschitz learning is well-posed in this limit, and that the learned functions converge to the solution of an $\infty$-Laplace type equation, depending on the choice of weights in the graph. For the standard choice of weights $w_{xy} = \Phi(\tfrac{|x-y|}{h})$ in a random geometric graph, the limiting $\infty$-Laplace equation $\Delta_\infty u = 0$ does not depend on the distribution of the unlabeled data, which means that Lipschitz learning is fully-supervised, and not semi-supervised in this limit, as was conjectured in \cite{el2016asymptotic}. However, for a graph with self-tuning weights (see Eq.~\eqref{eq:weights}), which are common in machine learning, we show that the limiting $\infty$-Laplace equation does depend on, and can be highly sensitive to, the distribution of the unlabeled data. In particular, the PDE includes a first order drift term that propagates labels along the negative gradient of the distribution. Thus, the observed insensitivity to the data distribution is a merely a function of the choice of weights in the graph, and is not inherent in Lipschitz learning. This suggests that self-tuning weights may be important in Lipschitz learning. We also present the results of numerical simulations on synthetic and real data showing that self-tuning weights improve classification accuracy for Lipschitz learning with very few labels. 

We mention that, contrary to most consistency results on graph Laplacians (e.g., \cite{hein2007graph}), our results make minimal use of probability and do not depend on the \emph{i.i.d}~assumption. In fact, our first result (Theorem \ref{thm:pinf}) on standard Lipschitz learning does not use probability at all, and simply requires the data to densely fill out a domain. Our second result (Theorem \ref{thm:pinf2}) on Lipschitz learning with self-tuning weights, requires that a kernel density estimator for the data density converges to a smooth function. This holds for random data in both \emph{i.i.d.}~ and non-\emph{i.i.d.}~settings (see Remark \ref{rem:noniid2} for a non-\emph{i.i.d.}~example). The reason the proof can work in non-\emph{i.i.d.}~settings is that the graph $\infty$-Laplacian involves the max and min of a sequence of random variables, instead of a sum, and the max and min can be bounded by controlling the size of the largest ``hole'' in the data, and do not require concentration of measure results, for which the \emph{i.i.d.}~assumption is crucial. We describe our results in more detail below.

\section{Main results}
\label{sec:main}

Here, we describe the setup and our main results. We mention that we use the analysis convention that $C,c>0$ denote arbitrary constants, whose value may change from line to line. We work on the flat Torus $\T^d=\R^d/\Z^d$, that is, we take periodic boundary conditions. For each  $n\in \N$ let  $X_n\subset \T^d$ be a collection of  $n$ points.   Let $\O \subset \T^d$ be a fixed finite collection of points and set
\begin{equation}\label{eq:graph}
\X_n := X_{n}\cup \O.
\end{equation}
The points $\X_n$ will form the vertices of our graph. To select the edge weights, let $\Phi:[0,\infty)\to [0,\infty)$ be a $C^2$  function satisfying 
\begin{equation}\label{eq:phi}
\begin{cases}
\Phi (s)\geq 1,&\text{if }s\in (0,1)\\
\Phi (s)=0,&\text{if }s\geq 2.
\end{cases}
\end{equation}
Select a length scale $h_n>0$ and define the weights
\begin{equation}\label{eq:weights_sigma}
\sigma_n(x,y) := \Phi\left( \frac{|x -y|}{h_n} \right),
\end{equation} 
where $|x-y|$ denotes the distance on the torus.
This choice of weights is standard in the construction of a random geometric graph and is widely used in consistency results \cite{hein2007graph,trillos2016continuum}. We now modify the construction to include self-tuning weights, which is standard in learning problems (see, e.g., \cite{ting2011analysis}). Given a constant $\alpha\in\R$, we define the self-tuning weights
\begin{equation}\label{eq:weights}
w_n(x,y) := d_n(x)^\alpha d_n(y)^\alpha \sigma_n(x,y),
\end{equation} 
where $d_n(x)$ is the (normalized) degree of vertex $x$ given by
\begin{equation}\label{eq:deg}
d_n(x) = \frac{1}{nh_n^d}\sum_{y\in \X_n}\sigma_n(x,y).
\end{equation}
Let  $\W_n=\left\{ w_n(x,y) \right\}_{x,y\in \X_n}$ and let   $\G_n=(\X_n,\W_n)$ be the graph with vertices  $\X_n$ and edge weights  $\W_n$.  We note that when $\alpha=0$ we get the standard construction of a random geometric graph, while for $\alpha>0$ the weights are larger in denser regions of the graph.

Let  $g:\O\to \R$ and let  $u_n:\X_n\to \R$ be the solution of the Lipschitz learning problem \eqref{eq:Lp}. As we discussed above (and will prove in Section \ref{sec:maxprinc}), the function  $u_n$ satisfies the optimality conditions 
\begin{equation}\label{eq:optinf}
\left\{\begin{aligned}
L_{n}u_n &=0&&\text{in } X_n\\
u_n&=g&&\text{in } \O,\end{aligned}\right.
\end{equation}
where $L_{n}$ is the graph $\infty$-Laplacian defined by
\begin{equation}\label{eq:dJi}
L_nu(x) := \max_{y \in \X_n} w_n(x,y)(u(y) -u(x)) + \min_{y \in \X_n} w_n(x,y) (u(y) -u(x)).
\end{equation}
We also define 
\begin{equation}\label{eq:r}
r_n=\sup_{x\in \T^d}\text{dist}(x,\X_n).
\end{equation}
We note that the graph  $\G_n$ is connected  whenever $r_n<h_n/(4\sqrt{d})$.  The only assumption we place on the data $\X_n$ at the moment is that $r_n\to 0$ fast enough so that
\begin{equation}\label{eq:rn}
\lim_{n\to \infty}\frac{r_n^2}{h_n^3}=0.
\end{equation}
This ensures, in particular, that the graph is connected as $n\to \infty$.

We first present a result for the standard random geometric graph with $\alpha=0$.
\begin{theorem}
Suppose that $\alpha=0$, $h_n \to 0$ and $r_n \to 0$ as  $n\to \infty$ so that \eqref{eq:rn} holds. Then
\begin{equation}
u_n \longrightarrow u \ \ \text{ uniformly on }\T^d \text{ as } n \to\infty,
\label{eq:convinf2}
\end{equation}
where $u \in C^{0,1}(\T^d)$ is the unique viscosity solution of the $\infty$-Laplace equation
\begin{equation}
\left\{
\begin{aligned}
\Delta_\infty u &= 0&&\text{in } \T^d\setminus \O\\
u&=g&&\text{on } \O.\\
\end{aligned}
 \right.
\label{eq:infLap}
\end{equation}
\label{thm:pinf}
\end{theorem}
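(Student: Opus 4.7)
The plan is to apply the half-relaxed limits method of Barles--Perthame--Souganidis to the monotone scheme $L_n u_n = 0$, viewing it as a finite-difference-type approximation of the $\infty$-Laplace equation. The three ingredients needed are an a priori $L^\infty$ bound, local consistency of $L_n$ with $\Delta_\infty$ on smooth test functions, and a comparison principle for the limit PDE. The bound $\min g \leq u_n \leq \max g$ is immediate from the discrete maximum principle for $L_n$ (any interior extremum of $u_n$ would force both the max and min weighted differences to vanish, which propagates to $\O$ by the connectivity implied by \eqref{eq:rn}).

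Define the half-relaxed limits
\begin{equation*}
\bar{u}(x) = \limsup_{n\to \infty,\,y\to x} u_n(y), \qquad \underline{u}(x) = \liminf_{n\to \infty,\,y\to x} u_n(y),
\end{equation*}
which are bounded, USC and LSC respectively. The key technical step is to show $\bar{u}$ is a viscosity subsolution and $\underline{u}$ a supersolution of $\Delta_\infty u = 0$ on $\T^d \setminus \O$. Given a test function $\phi \in C^3$ touching $\bar{u}$ from above at $x_0 \in \T^d \setminus \O$ with $\nabla \phi(x_0) \neq 0$, a standard perturbation argument produces $x_n \to x_0$ at which $u_n - \phi$ attains a local maximum, and then the monotonicity inequality $L_n u_n(x_n) \leq L_n \phi(x_n)$ gives $L_n \phi(x_n) \geq 0$. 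The core computation is a Taylor expansion of $\phi(x_n + v)$ and a two-scale analysis of the smooth objective $v \mapsto \Phi(|v|/h_n)(\phi(x_n+v) - \phi(x_n))$: its leading-order maximizer sits at $v^* \approx s^* h_n \nabla\phi/|\nabla\phi|$, where $s^* = \argmax_{s\in[0,2]} s\Phi(s)$, and by the symmetric analysis for the minimum one obtains a consistency expansion
\begin{equation*}
L_n \phi(x_n) = c_\Phi\, h_n^2 \,\Delta_\infty \phi(x_n) + O(h_n^3) + O(r_n^2),
\end{equation*}
with $c_\Phi = (s^*)^2 \Phi(s^*) > 0$. Since $r_n^2/h_n^2 = h_n\cdot (r_n^2/h_n^3) \to 0$ under \eqref{eq:rn}, dividing through yields $\Delta_\infty \phi(x_0) \geq 0$. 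The degenerate case $\nabla \phi(x_0) = 0$ is handled by passing to the lower semicontinuous envelope of the singular operator $\Delta_\infty$, as is standard in the viscosity theory for the $\infty$-Laplacian.

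To attain the Dirichlet condition $\bar{u} = \underline{u} = g$ on the finite set $\O$, I would construct cone barriers at each $z \in \O$: the function $v(x) = g(z) + L\,|x-z|$ with $L = \mathrm{osc}(g)/\min_{z\neq z'\in \O}|z-z'|$ dominates $g$ on $\O$, and the same consistency expansion applied to the $\infty$-harmonic cone $|x-z|$ shows $L_n v = o(h_n^2)$ for $x \neq z$. After a small strict-supersolution modification to absorb the $o(h_n^2)$ error, the discrete comparison principle gives $u_n \leq v + o(1)$ uniformly, and hence $\bar{u}(z) \leq g(z)$; since $u_n(z) = g(z)$ the matching lower bound is symmetric.

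The proof concludes by invoking the classical comparison principle for viscosity sub/supersolutions of $\Delta_\infty u = 0$ on $\T^d \setminus \O$ with Dirichlet data on the finite set $\O$ (equivalent to the uniqueness of absolutely minimizing Lipschitz extensions on a length space), which forces $\bar{u} \leq \underline{u}$. Combined with the trivial $\bar{u} \geq \underline{u}$, one concludes $\bar{u} = \underline{u} = u$, and standard arguments upgrade this to uniform convergence $u_n \to u$ on $\T^d$. The main technical obstacle is establishing the quadratic (not linear) discretization error $O(r_n^2)$ in the consistency expansion: this relies on the observation that at the continuous maximizer $v^*$ of $\Phi(|v|/h_n)(\phi(x+v) - \phi(x))$ the Hessian in $v$ is of order one (the $1/h_n$ produced by differentiating $\Phi(|v|/h_n)$ is balanced by the $h_n$ in $\phi(x+v)-\phi(x)$), so replacing $v^*$ by the nearest point of $\X_n - x$ loses only $O(r_n^2)$, and this is exactly why the hypothesis $r_n^2/h_n^3 \to 0$ (rather than merely $r_n \ll h_n$) appears as the sharp scaling.
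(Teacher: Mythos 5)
Your proposal takes a genuinely different route from the paper. You use the Barles--Perthame--Souganidis half-relaxed limits framework together with a strong comparison principle for semicontinuous sub/supersolutions, whereas the paper first derives a uniform Lipschitz estimate $|u_n(x)-u_n(y)|\leq C(|x-y|+h_n)$ (Lemma~\ref{lem:lip}) from the \emph{a priori} bound $\max_{x,y}w_n(x,y)|u_n(x)-u_n(y)|\leq Ch_n$ of Theorem~\ref{thm:existunique} via a chaining argument, and then applies Arzel\`a--Ascoli to pass to a uniformly convergent subsequence. That compactness route makes the Dirichlet condition $u=g$ on $\O$ automatic for the limit; your route must instead manufacture the boundary condition, and that is exactly where your argument breaks.

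The cone barrier is not a discrete supersolution near its vertex. You assert that the consistency expansion applied to $v(x)=|x-z|$ gives $L_nv=o(h_n^2)$ for all $x\neq z$, but the expansion requires $v$ to be $C^2$ on a ball of radius $2h_n$ about $x$, which fails for $|x-z|\lesssim h_n$. A direct computation there shows
\[
L_nv(x)\ \approx\ \Bigl(\max_{0\le s\le 2}s\Phi(s)\Bigr)h_n\ -\ |x-z|\,\Phi\bigl(|x-z|/h_n\bigr)\ >\ 0,
\]
of size $O(h_n)$, so $v$ is a discrete \emph{sub}solution, not a supersolution, in that annulus; the strictification $-c|x-z|^2$ you propose contributes only $O(h_n^2)$ and cannot offset an $O(h_n)$ error of the wrong sign. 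Consequently the discrete comparison cannot be started from $\O$ alone; one needs independent control on $u_n$ at distance $O(h_n)$ from $z$, which is precisely what the one-hop bound $|u_n(x)-u_n(y)|\leq Ch_n$ for $|x-y|\leq h_n$ (valid when $\alpha=0$ since $w_n(x,y)\geq 1$ there) and its chained version \eqref{eq:lip} provide. A secondary issue: your diagnosis of the discretization error is off. The Hessian of $\psi(v)=\Phi(|v|/h_n)(\phi(x+v)-\phi(x))$ has leading term $h_n^{-2}\Phi''(\cdot)(\phi(x+v)-\phi(x))=O(h_n^{-1})$, so $\|\nabla^2\psi\|_\infty=O(h_n^{-1})$ and the loss from snapping the optimizer to $\X_n$ is $O(r_n^2h_n^{-1})$, exactly as in Lemma~\ref{lem:approximation1} --- not $O(r_n^2)$ as you claim. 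Dividing by $h_n^2$ then yields the condition $r_n^2/h_n^3\to 0$ in \eqref{eq:rn}; if the loss were truly $O(r_n^2)$, the required scaling would be the strictly weaker $r_n\ll h_n$, contradicting your own conclusion about sharpness.
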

We remark that Theorem \ref{thm:pinf} is a generalization (to the random graph setting) of the convergence results of Oberman \cite{oberman2013finite,oberman2005convergent} for a similar scheme for the $\infty$-Laplace equation on a uniform grid. We note that the viscosity solution of \eqref{eq:infLap} is in general only Lipschitz continuous, and is not  a classical $C^2$  solution. The notion of viscosity solution is based on the maximum principle, and is the natural notion of weak solution for nonlinear elliptic equations. Viscosity solutions are only required to be continuous functions, and satisfy the partial differential equation in a weak sense. We define viscosity solution for \eqref{eq:infLap} in Section \ref{sec:proof}.  For more details on viscosity solutions, we refer the reader to the user's guide \cite{crandall1992user}. 

\begin{remark}\label{rem:noniid}
Notice that Theorem \ref{thm:pinf} makes no assumptions on the distribution of the unlabeled data $X_n$. The unlabeled data may be deterministic or random, and if random, may not be \emph{i.i.d.} This says that while Lipschitz learning on standard random geometric graphs is well-posed in the limit of infinite unlabeled and finite labeled data, the limit is completely independent of the unlabeled data, and so the algorithm is fully supervised, and not semi-supervised, in this limit. This was suggested by the authors of \cite{el2016asymptotic}, and Theorem \ref{thm:pinf} provides a rigorous statement of this result. 
\end{remark}

We now consider the case where $\alpha\neq 0$. We assume there exists a function $f\in C^2(\T^d)$ such that for 
\begin{equation}\label{eq:Rn}
R_n := \sup_{x\in \X_n}\{|d_n(x) - f(x)|\}
\end{equation}
we have
\begin{equation}\label{eq:Rnlimit}
\lim_{n\to \infty}\frac{R_n}{h_n} = 0.
\end{equation}
In other words, the asymptotic expansion
\begin{equation}\label{eq:Rnlimit2}
d_n(x) = f(x) + o(h_n)
\end{equation}
holds uniformly in $x$ as $n\to \infty$.  Here, $d_n(x)$ is, up to a constant, a kernel density estimator \cite{silverman2018density} for the density of both labeled and unlabeled data. Since the number of labeled data points is finite as $n\to \infty$, the function $f(x)$ represents the density of the \emph{unlabeled data}. Remark \ref{rem:iid} makes this precise.  We note that, in practice, it is not necessary to use the same bandwidth $h_n$ for the kernel density estimator $d_n$ and the weights $\sigma_n$, and there may be situations where decoupling these quantities is advantageous.
\begin{remark}\label{rem:knngraphs}
For $k$-nearest neighbor graphs, the degree \eqref{eq:deg} is not an estimate of the data distribution, that is, \eqref{eq:Rnlimit2} does not hold. Indeed, in an unweighted $k$-nearest neighbor graph the degree is constant. In this case, we can slightly modify the self-tuning weights to use a $k$-nearest neighbor density estimator. Letting $D_{n,k}(x)$ denote the distance from $x$ to the $k^{\rm th}$ nearest neighbor in $\X_n$, self-tuning weights for a $k$-nearest neighbor graph can be defined as 
\begin{equation}\label{eq:selfknn}
w_n(x,y) := D_{n,k}(x)^{-\alpha} D_{n,k}(y)^{-\alpha} \sigma_n(x,y).
\end{equation}
We expect the conclusions of Theorem \ref{thm:pinf2} below to hold in this setting with minor modifications.
\end{remark}

To derive the continuum PDE when $\alpha\neq 0$, we note that the continuum variational problem to \eqref{eq:Lp} is $\min_u \|f^{2\alpha}\nabla u\|_{L^\infty}$. We can approximate this problem by a sequence of $p$-Laplace type problems of the form
\[\min_u \int f^{2\alpha p}|\nabla u|^p\, dx\]
as $p\to \infty$. The Euler-Lagrange equation  for this problem is
\[\div(f^{2\alpha p}|\nabla u|^{p-2}\nabla u)=0.\]
Expanding the divergence we obtain
\[f^{2\alpha p}|\nabla u|^{p-2}(\Delta u + 2\alpha p \nabla \log f \cdot \nabla u + (p-2)\Delta_\infty u) = 0.\]
Cancelling the term out front and sending $p\to \infty$ we formally obtain the $\infty$-Laplace equation
\[\Delta_\infty u + 2\alpha \nabla \log f \cdot \nabla u = 0.\]

We now present our result for $\alpha\neq 0$, which verifies the formal arguments above.
\begin{theorem}
Suppose that $\alpha\neq 0$, and that $h_n \to 0$, $r_n \to 0$, and $R_n\to 0$ as  $n\to \infty$ so that \eqref{eq:rn} and \eqref{eq:Rnlimit} hold.
Then 
\begin{equation}
u_n \longrightarrow u \ \ \text{ uniformly on }\T^d \text{ as } n \to\infty,
\label{eq:convinf}
\end{equation}
where $u \in C^{0,1}(\T^d)$ is the unique viscosity solution of the $\infty$-Laplace type equation
\begin{equation}
\left\{
\begin{aligned}
\Delta_\infty u + 2\alpha \nabla \log f \cdot \nabla u &= 0&&\text{in } \T^d\setminus \O\\
u&=g&&\text{on } \O.\\
\end{aligned}
 \right.
\label{eq:infLap2}
\end{equation}
\label{thm:pinf2}
\end{theorem}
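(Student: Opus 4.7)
The proof mirrors the structure of Theorem~\ref{thm:pinf}, working within the half-relaxed limit framework from the theory of viscosity solutions. Define
\[u^*(x) = \limsup_{\substack{n\to\infty\\y\to x}} u_n(y), \qquad u_*(x) = \liminf_{\substack{n\to\infty\\y\to x}} u_n(y).\]
The discrete maximum principle for $L_n$ (Section~\ref{sec:maxprinc}) gives $\|u_n\|_\infty \leq \max_\O|g|$, so both envelopes are bounded. The plan is to verify that $u^*$ is a viscosity subsolution, and $u_*$ a viscosity supersolution, of \eqref{eq:infLap2}. A comparison principle for this drift-perturbed $\infty$-Laplace equation with labels at the finite set $\O$ then forces $u^* \leq u \leq u_*$, which together with $u_* \leq u^*$ by definition yields $u^* = u_* = u$ and uniform convergence.

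The new ingredient compared to Theorem~\ref{thm:pinf} is a consistency expansion for $L_n$ acting on smooth test functions that accounts for the self-tuning factor. Fix $\phi \in C^3(\T^d)$ and $x \in \T^d$ with $\nabla\phi(x)\neq 0$. For $y = x + h_n v \in \X_n$ with $|v|\leq 2$, the hypothesis \eqref{eq:Rnlimit2} together with a Taylor expansion of $f^\alpha$ (which is $C^2$ provided $f>0$, as will be implicit from \eqref{eq:Rnlimit}) gives
\[d_n(x)^\alpha d_n(y)^\alpha = f(x)^{2\alpha}\bigl(1 + \alpha h_n\nabla\log f(x)\cdot v + O(h_n^2)\bigr) + o(h_n).\]
Combined with $\phi(y)-\phi(x) = h_n\nabla\phi(x)\cdot v + \tfrac{h_n^2}{2}v^\top D^2\phi(x) v + O(h_n^3)$, one obtains
\[\frac{w_n(x,y)(\phi(y)-\phi(x))}{f(x)^{2\alpha}h_n^2} = \Phi(|v|)\Bigl[\tfrac{1}{h_n}\nabla\phi\cdot v + \alpha(\nabla\log f\cdot v)(\nabla\phi\cdot v) + \tfrac{1}{2}v^\top D^2\phi v\Bigr] + o(1).\]
The leading $O(h_n^{-1})$ term is maximized over $v\in B(0,2)$ at $v^* = s^*\,\nabla\phi/|\nabla\phi|$, where $s^*\in(0,2)$ maximizes $s\Phi(s)$; since $v^*$ is a critical point, perturbing $v$ away from $v^*$ at scale $O(h_n)$ affects this leading term only at $O(h_n)$. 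Writing $m^* = s^*\Phi(s^*)$, the max and min are thus, up to $o(1)$,
\[\pm\tfrac{m^*|\nabla\phi|}{h_n} + \Phi(s^*)(s^*)^2\Bigl[\alpha\,\nabla\log f\cdot\nabla\phi + \tfrac{1}{2}\Delta_\infty\phi\Bigr].\]
Crucially, the second-order correction is \emph{even} in $v$ (a quadratic form evaluated at $\pm v^*$), so summing max and min cancels the $O(h_n^{-1})$ term and doubles the correction, yielding
\[L_n\phi(x) = C\, h_n^2 f(x)^{2\alpha}\bigl[\Delta_\infty\phi(x) + 2\alpha\,\nabla\log f(x)\cdot\nabla\phi(x)\bigr] + o(h_n^2),\]
with $C = \Phi(s^*)(s^*)^2 > 0$. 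The density condition \eqref{eq:rn} is precisely what is needed to replace the continuum max/min over $v\in B(0,2)$ by the discrete max/min over $y\in\X_n$ at this $o(h_n^2)$ level of precision.

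The viscosity verification is now standard. If $u^*-\phi$ has a strict local max at $x_0 \in \T^d\setminus\O$ with $\nabla\phi(x_0)\neq 0$, a perturbed-test-function argument produces $x_n\in \X_n\setminus\O$ with $x_n\to x_0$ at which $u_n-\phi$ attains a local max. The monotonicity of max and min then gives $0 = L_n u_n(x_n) \leq L_n\phi(x_n)$, and dividing by $C f(x_n)^{2\alpha}h_n^2 > 0$ and passing to the limit via the consistency identity yields the subsolution inequality at $x_0$. A symmetric argument handles $u_*$. The degenerate case $\nabla\phi(x_0)=0$ is treated via the upper/lower semicontinuous envelope of the nonlinear operator in \eqref{eq:infLap2}, exactly as for the pure $\infty$-Laplacian in Theorem~\ref{thm:pinf}. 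The Dirichlet condition passes to the limit because $\O$ is finite and $u_n\equiv g$ there.

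The hard part is the consistency expansion above: tracking the $o(h_n)$ error in $d_n(x)^\alpha d_n(y)^\alpha - f(x)^\alpha f(y)^\alpha$ through multiplication by $\sigma_n$ and the Taylor expansion of $\phi$, and verifying both the cancellation of the divergent $O(h_n^{-1})$ contributions between max and min and the precise form of the $O(h_n^2)$ remainder. A secondary technical point is the comparison principle for \eqref{eq:infLap2} with Dirichlet data on a finite set; this can be obtained by adapting the doubling-of-variables argument used for the standard $\infty$-Laplacian (as in Theorem~\ref{thm:pinf} and \cite{peres2009tug}), since the drift coefficient $2\alpha\nabla\log f$ is Lipschitz on $\T^d$ when $f\in C^2$ is bounded below.
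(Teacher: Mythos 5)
Your overall strategy is viable and departs from the paper in a genuinely different direction: you use Barles--Perthame half-relaxed limits, while the paper proves a uniform Lipschitz estimate (Lemma~\ref{lem:lip}) and applies a variant of Arzel\`a--Ascoli to extract an honest convergent subsequence before identifying the limit. The trade is not free, though, and as written your version has a gap at the labeled points. Once $u^*$ is a subsolution and $u_*$ a supersolution on $\T^d\setminus\O$, the comparison principle requires $u^*\leq g\leq u_*$ on $\O$. The \emph{a priori} bound $\|u_n\|_\infty\leq\max_\O|g|$ does not give this: $\O$ consists of isolated interior points, and half-relaxed limits need not attain the prescribed values there without some local modulus of continuity near each $x_0\in\O$. ``The Dirichlet condition passes to the limit because $\O$ is finite'' is an assertion, not a proof; you either need a barrier construction near each labeled point or the Lipschitz estimate that the paper proves anyway.

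The consistency step also overclaims. Your identity $L_n\phi = C h_n^2 f^{2\alpha}\bigl[\Delta_\infty\phi + 2\alpha\nabla\log f\cdot\nabla\phi\bigr] + o(h_n^2)$ with $C=\Phi(s^*)(s^*)^2$ is not true in general: the expressions inside $\max_y$ and $\min_y$ are maximized/minimized at radii $s_1\neq s_2$, and the assertion that perturbing $v$ away from $v^*$ by $O(h_n)$ changes the $O(h_n^{-1})$ term only at $O(h_n)$ presupposes a non-degenerate maximum of $s\mapsto s\Phi(s)$, which is not part of the hypotheses \eqref{eq:phi}. The paper deliberately establishes only the one-sided sign inequality (Theorem~\ref{thm:consistency}), with the error controlled carefully via Proposition~\ref{prop:max} (maximize over the sphere at fixed radius, then over radius), and that one-sided estimate is all the viscosity argument needs. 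Your reasoning can be salvaged---the combined second-order contribution has the sign of $\Delta_\infty\phi+2\alpha\nabla\log f\cdot\nabla\phi$ regardless of where the argmaxes land---but the clean constant $C$ is not justified and should not be asserted. You also conflate two approximation errors that the paper keeps separate via the intermediate operator $H_n$ of \eqref{eq:T} and Lemma~\ref{lem:approximation1}: replacing $d_n(x)^\alpha d_n(y)^\alpha$ by $f(x)^\alpha f(y)^\alpha$ (controlled by $R_n/h_n\to 0$), and replacing the discrete extrema over $\X_n$ by continuum extrema over $\T^d$ (controlled by $r_n^2/h_n^3\to 0$); tracking these as a single ``$o(h_n)$'' term obscures why both scaling hypotheses are needed.
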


Several remarks are in order.

\begin{remark}\label{rem:dist}
Notice the continuum PDE \eqref{eq:infLap2} in Theorem \ref{thm:pinf2} contains the additional linear term $\nabla \log f \cdot \nabla u$. This is a drift (also called advection or transport) term that acts to propagate the labels along the negative gradient of $\log f$. Since $f$ represents the distribution of the unlabeled data, this additional drift term acts to propagate labels from regions of high density to regions of lower density (when $\alpha>0$; the reverse is true for $\alpha<0$). Hence, Lipschitz learning with self-tuning weights is not only well-posed in the limit of infinite unlabeled data and finite labeled data, but the algorithm also remembers the structure of the unlabeled data, and the degree of sensitivity to unlabeled data can be controlled by tuning the parameter $\alpha$. Hence, Lipschitz learning with self-tuning weights retains the benefits of semi-supervised learning in the limit of infinite unlabeled data, which suggests that self-tuning weights are very important in applications of Lipschitz learning with few labels.
\end{remark}

\begin{remark}\label{rem:classicallearning}
In contrast with classical learning theory \cite{bousquet2004introduction}, the regularity of the label function $g$ does not play a role in this setting of finite labeled and infinite unlabeled data, because we are very coarsely sampling $g$. Instead, the regularity of the solution $u$ of the limiting partial differential equation \eqref{eq:infLap2} is important in controlling rates of convergence in Theorems \ref{thm:pinf} and \ref{thm:pinf2}. Unfortunately, $u$ is a viscosity solution, and is at best Lipschitz continuous, so it is impossible to exploit regularity of $u$ to prove convergence rates, as is often done in classical learning theory. There may be other techniques available to prove convergence rates (see, e.g., \cite{smart2010infinity}), and we leave this to future work.
\end{remark}
\begin{remark}\label{rem:iid}
We can specialize Theorems \ref{thm:pinf} and \ref{thm:pinf2} to the case where $X_n=\left\{ Y_1,Y_2,Y_3,\dots,Y_n \right\}$ is a sequence of independent and identically distributed  random variables with a $C^2$ probability density function $\rho$ bounded away from zero (strictly positive).  The two conditions we need to verify are \eqref{eq:rn} and \eqref{eq:Rnlimit}.

Obtaining the condition \eqref{eq:rn} is standard in probability; we include the brief argument here for the reader's convenience. We partition $\T^d$ into $t^{-d}$ cubes $B_1,B_2,B_3,\dots$ of side length $t>0$. If $r_n \geq \delta \sqrt{d}$ then at least one cube must contain no points from the sample $X_n$, and so 
\[\P(r_n \geq t \sqrt{d}) \leq \sum_{i=1}^{t^{-d}} \P(X_n\cap B_i=\varnothing)\leq t^{-d}(1- \gamma \,t^d)^n,\]
where $\gamma := \min_{\T^d}\rho > 0$, and we assume $t$ is small enough so that $\gamma\,t^d < 1$. Using $\log(1+x)\leq x$ with $x=-\gamma\, t^d$ we obtain
\[\P(r_n \geq t \sqrt{d}) \leq \exp\left( -\gamma n t^d - d\log(t) \right).\]
Setting $t^2 = \delta h_n^3/d$ yields
\[\P(r_n^2/h_n^3\geq \delta)\leq \exp\left(  -\gamma d^{-d/2} nh_n^{3d/2}\delta^{d/2} -\tfrac{d}{2} \log(\delta h_n^3/d) \right),\]
and hence \eqref{eq:rn} holds almost surely provided
\begin{equation}\label{eq:hs_inf}
\lim_{n\to \infty}\frac{nh_n^{3d/2}}{\log(n)} = \infty.
\end{equation}

The condition \eqref{eq:Rnlimit} follow from standard kernel density estimation theory. Indeed, notice we can write
\[d_n(x) = \frac{1}{nh_n^d}\sum_{y\in X_n}\sigma_n(x,y) + O\left( \frac{|\O|h_n^2}{nh_n^{d+2}} \right).\]
Since $X_n$ is a sequence of \emph{i.i.d.}~random variables,  it is a standard fact in the kernel density estimation literature (see Appendix \ref{app:iid}) that \eqref{eq:Rnlimit} holds provided
\begin{equation}\label{eq:hs_inf2}
\lim_{n\to \infty}\frac{nh_n^{d+2}}{\log(n)} = \infty.
\end{equation}
In summary, in the \emph{i.i.d.}~case, the condition \eqref{eq:rn} in Theorem \ref{thm:pinf} can be replaced with \eqref{eq:hs_inf}, while in Theorem \ref{thm:pinf2} we require both \eqref{eq:hs_inf} and \eqref{eq:hs_inf2} to hold.

The reader should contrast this with the requirement that 
\[\lim_{n\to \infty}\frac{nh_n^{d}}{\log(n)} = \infty \quad \text{ or }\quad \lim_{n\to \infty}\frac{nh_n^{d +2}}{\log(n)} = \infty\]
for the consistency results in Laplacian based regularization \cite{trillos2016continuum,hein2005graphs}. The reason for the difference is that for the graph Laplacian, one needs to control the fluctuations in a sum of random variables, and typically the Bernstein inequality is used for this. For the $\infty$-Laplacian, which involves the maximum of a collection of random variables, the techniques to establish concentration are significantly different.
\end{remark}

\begin{remark}\label{rem:noniid2}
We note that Theorem \ref{thm:pinf2} does not require the \emph{i.i.d.}~assumption. We simply need the kernel density estimator \eqref{eq:deg} to be consistent, i.e., \eqref{eq:Rnlimit} must hold for some $f$. There are many examples of non-\emph{i.i.d.}~data for which kernel density estimators are consistent in this sense. For example, the data may be deterministic, and then \eqref{eq:Rnlimit} holds if the data is sufficiently uniformly spread out. A deterministic example is a grid $h\Z^d$.

For a more involved and realistic example, we can consider data of the form $X_n = \{\tau(Y_i,Y_j)\}_{i\neq j}$, where $Y_1,\dots,Y_m$ is a sequence of \emph{i.i.d.}~random variables with $C^2$ density $\rho$, $1\leq i,j\leq m$, and $n=m(m-1)$. Our dataset $X_n$ is thus a collection of $n$ identically distributed, but \emph{not independent}, random variables. 
 Data in this form arises in problems in statistical analysis of spatial point patterns \cite{illian2008statistical,frees1994estimating}, and in claims models for insurance dealing with sums of insurance claims \cite{frees1994estimating}, among many other problems. As an example, if $Y_i$ and $Y_j$ represent spatial positions, then $\tau(Y_i,Y_j)$ could represent any notion of distance between $Y_i$ and $Y_j$, which is called the \emph{interpoint distance}. There are many problems, such as prediction of airline flight delays \cite{choi2016prediction,rebollo2014characterization}, where labels are assigned to origin-destination pairs in this way.
   
In this setting, the condition \eqref{eq:Rnlimit} is essentially the problem of density estimation for functions of observations, such as $U$-statistics, which is the focus of much work in statistics (see, e.g.,  \cite{gine2007local}).  The condition \eqref{eq:Rnlimit} holds with
\begin{equation}\label{eq:marginal}
f(z) = \int_{\R^d}\Phi(|x|)\, dx\int_{\R^d}\rho(\psi_y(z)) |D_x \tau(\psi_y(z),y)|^{-1} \rho(y) dy,
\end{equation}
provided $\tau$ satisfies some non-degeneracy conditions, and
\begin{equation}\label{eq:hs_inf4}
\lim_{n\to \infty}\frac{\sqrt{n}h_n^{d+2}}{\log(n)} = \infty.
\end{equation}
In \eqref{eq:marginal}, $\psi_y$ is the inverse of $x\mapsto \tau(x,y)$.
We review a proof of these facts, and make precise our assumptions on $\tau$, in Appendix \ref{app:noniid}. We note this construction can easily be generalized to higher degrees of dependence, such as $X_n = \{\tau(Y_i,Y_j,Y_k)\}_{i\neq j\neq k}$, and so on. In these cases, the rate \eqref{eq:hs_inf4} worsens in the dependence on $n$ (e.g., $n^{1/3}, n^{1/4}$, etc.), since there is less independence in the data.
\end{remark}

\begin{remark}
We note that the requirement  $\Phi \in C^2$ in Theorems \ref{thm:pinf} and \ref{thm:pinf2}  is  necessary;  it is used in the proof of Lemma \ref{lem:approximation1}. If  $\Phi \in C^1$  then the proof of Theorem \ref{thm:pinf} can be modified by using \eqref{eq:approximation2} in place of \eqref{eq:approximation1} from Lemma \ref{lem:approximation1}. The only difference is that the condition \eqref{eq:rn} must be replaced with 
\[\lim_{n\to \infty}\frac{r_n}{h_n^3}=0.\]
Remark   \ref{rem:iid} remains true provided \eqref{eq:hs_inf} is replaced by 
\[\lim_{n\to \infty}\frac{nh_n^{3d}}{\log(n)} = \infty.\]
\end{remark}
\begin{remark}
If instead of working on the Torus  $\T^d$, we take our unlabeled points to be sampled from a domain  $X_n\subset \Omega\subset \R^d$, then we expect that Theorem \ref{thm:pinf} will hold  under similar hypotheses with the additional boundary condition
\[\frac{\partial u}{\partial \nu}=0\quad \text{  on }\partial \Omega.\] 
\end{remark}

\subsection{Outline}

The rest of the paper is organized as follows. In Section \ref{sec:maxprinc} we discuss the maximum principle for the graph  $\infty$-Laplacian and prove existence and uniqueness of  solutions to \eqref{eq:optinf}. In Section \ref{sec:cons} we prove consistency of the graph $\infty$-Laplacian for graphs with self-tuning weights, and in Section \ref{sec:proof} we review the definition of viscosity solution, and then give the proofs of Theorems \ref{thm:pinf} and \ref{thm:pinf2}. We conclude in Section \ref{sec:conc}.

\section{The maximum principle}
\label{sec:maxprinc}

In this section we show that \eqref{eq:optinf} is well-posed and establish \emph{a priori} estimates on the solution  $u_n$. The proof relies on the maximum principle on a graph, which we review below. 
 
We first introduce some notation. We say that  $y$ is adjacent to  $x$ whenever  $w_n(x,y)>0$.  We say that the graph  $\G_n=(\X_n,\W_n)$  is connected to $\O\subset \X_n$ if for every  $x\in \X_n\setminus \O$  there exists $y\in \O$  and a path from  $x$ to $y$ consisting of adjacent vertices.    

We now present the maximum principle for the graph $\infty$-Laplace equation. 
\begin{theorem}[Maximum principle]\label{thm:maxprinc}
Assume the graph $\G_n=(\X_n,\W_n)$ is connected to $\O\subset \X_n$. Let $u,v:\X_n \to \R$ satisfy
\[L_nu(x)\geq 0\geq L_nv(x).\]
Then 
\begin{equation}\label{eq:maxprinc}
\max_{\X_n}(u - v) = \max_{\O} (u - v).
\end{equation}
\end{theorem}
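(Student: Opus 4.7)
The plan is to prove the maximum principle by a contact-set propagation argument, carefully exploiting the max-plus-min structure of $L_n$. Set $M := \max_{\X_n}(u-v)$ and
\[
S := \left\{x \in \X_n : (u-v)(x) = M\right\}.
\]
Since $\O \subset \X_n$, the inequality $\max_\O(u-v) \leq M$ is automatic, so it suffices to show $S \cap \O \neq \varnothing$. Suppose for contradiction that $S \subseteq \X_n \setminus \O$.

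First I would observe that at any $x_0 \in S$ the inequality $(u-v)(y) \leq (u-v)(x_0)$ rewrites as $u(y) - u(x_0) \leq v(y) - v(x_0)$, and multiplication by $w_n(x_0,y) \geq 0$ preserves it. Applying $\max_y$ and $\min_y$ (both monotone under pointwise inequalities) and summing yields $L_n u(x_0) \leq L_n v(x_0)$. For $x_0 \in S \setminus \O$ the hypothesis then forces $L_n u(x_0) = L_n v(x_0) = 0$, and a short check (the differences of max-terms and of min-terms have opposite signs but sum to zero) shows that the max-term and the min-term for $u$ must each coincide with the corresponding term for $v$.

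Second, I would establish a propagation lemma: if $x_0 \in S \setminus \O$ and $y^* \in \argmax_y w_n(x_0,y)(u(y) - u(x_0))$ with $w_n(x_0,y^*) > 0$, then $y^* \in S$. Indeed, the coincidence of max-terms sandwiches $w_n(x_0,y^*)(v(y^*)-v(x_0))$ between $w_n(x_0,y^*)(u(y^*)-u(x_0))$ below and the common max above, forcing $u(y^*) - u(x_0) = v(y^*) - v(x_0)$, hence $(u-v)(y^*) = M$. Crucially, if this common max is strictly positive then $u(y^*) > u(x_0)$. To close the argument, pick $z \in \argmax_{x \in S} u(x)$; by assumption $z \in S \setminus \O$. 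If $\max_y w_n(z,y)(u(y)-u(z)) > 0$, the lemma produces some $y^* \in S$ with $u(y^*) > u(z)$, contradicting the choice of $z$. Hence both the max-term and the min-term at $z$ vanish, so every $y$ adjacent to $z$ satisfies $u(y) = u(z)$; the identical reasoning for $v$ yields $v(y) = v(z)$. Thus every neighbor of $z$ lies in $\argmax_S u \subseteq S$, and iterating along a path from $z$ to $\O$ guaranteed by the hypothesis that $\G_n$ is connected to $\O$ produces a point in $S \cap \O$, the desired contradiction.

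The main obstacle is the nonlinearity of $L_n$, which prevents subtracting the sub- and supersolution inequalities to obtain a clean equation for $u - v$; one must work structurally with the max and min operators and propagate from the \emph{right} vertex in $S$, namely the one maximizing $u$, since starting from an arbitrary contact point would leave no handle to rule out the alternative in which the inner max at $x_0$ is strictly positive.
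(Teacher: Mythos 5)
Your proof is correct, and while it shares the same core observations with the paper's argument---at a contact point $x_0$ of $u-v$ one has $L_n u(x_0)=L_n v(x_0)=0$ with the $\max$- and $\min$-terms for $u$ and $v$ coinciding separately, and the dichotomy of whether the common max vanishes---it takes a genuinely cleaner route to the contradiction. The paper starts from an \emph{arbitrary} contact point and builds a path that interleaves ``jump'' steps (when the common max is positive, move to a contact point with strictly larger $u$) with ``flat'' walks (when it vanishes, walk toward $\O$ through the contact set), then invokes strict monotonicity of $u$ along the jumps plus finiteness to force termination in $\O$. You instead begin at $z\in\argmax_{S} u$, which immediately rules out the jump case (it would produce a contact point with $u$-value exceeding $\max_S u$), so only the flat case occurs; $\argmax_S u$ is then closed under adjacency on $S\setminus\O$, and a path to $\O$ guaranteed by connectivity stays in $S$ the whole way, contradicting $S\cap\O=\varnothing$. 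This trades the paper's interleaved path construction and strict-monotonicity termination argument for a single extremal choice of starting vertex, and you correctly flag that choice as the essential trick. One small slip in the wording of your first paragraph: the differences of the $\max$-terms and of the $\min$-terms for $u$ and $v$ are both nonpositive (not of opposite signs); the conclusion that each vanishes still follows because they are same-signed and sum to zero.
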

The proof of Theorem \ref{thm:maxprinc} in a similar setting was proved in \cite{manfredi2015nonlinear}. We include a simple proof here for completeness. 
\begin{proof}
Define
\[M=\{x \in \X_n \, : \, u(x)-v(x) = \max_{\X_n} (u-v)\}.\]
If  $M\cap \O\neq  \varnothing $ then we are done, so we may assume that  $M\cap \O=\varnothing $.   Let $x \in M$. Then we have 
\[u(x) -u(y)\geq v(x) -v(y)\quad \text{for all }y\in \X_n.\]
It follows that  $L_nu(x)\leq L_nv(x)$.  The opposite inequality is true by hypothesis, and hence $L_nu(x)=0=L_nv(x)$ whenever  $x\in M$.  This implies that 
\[A:=\max_{y \in \X_n} w_n(x,y) (u(x)-u(y)) = \max_{y \in \X_n} w_n(x,y) (v(x)-v(y)),\]
\[B:=\min_{y \in \X_n} w_n(x,y) (u(x)-u(y)) = \min_{y \in \X_n} w_n(x,y) (v(x)-v(y)),\]
and $A + B = -L_n u(x)=-L_n v(x)=0$. We now have two cases.

1. If $A>0$ and $B<0$ then there exists $z \in \X_n$ such that
\[w_n(x,z)(u(x)-u(z)) = \min_{y\in \X_n} w_n(x,y)(u(x) - u(y)) < 0.\]
Therefore $u(z) > u(x)$ and
\[u(z) - v(z) = -(u(x) - u(z)) + v(x) - v(z) + u(x) - v(x)\geq u(x)-v(x).\]
It follows that
\begin{equation}\label{eq:a}
u(z) - v(z) = u(x)-v(x),\  u(z) > u(x), \text{ and } v(z) > v(x).
\end{equation}

2. If $A=B= 0$ then $u(x)=u(y)$ and $v(x)=v(y)$ for all $y$ adjacent to $x$, and so  
\[u(y) - v(y) = u(x) - v(x)\]
for all $y$ adjacent to $x$.

Let $Q_1 \subset M$ be the collection of points for which case 1 holds, and let $Q_2\subset M $ be the points for which case 2 holds. We construct a path in $M$ inductively as follows. Let $x_0 \in M$ and suppose we have chosen $x_0,\dots,x_k$. If $x_k \in Q_1$, we choose $x_{k+1}=z$ as in case 1 above. If $x_k \in Q_2$, then we find a path $x_k=y_1,\dots,y_\ell$ from $x_k$ to $y_\ell\in\O$. Let
\[j = \max\{i \, : \,  y_q \in Q_2 \text{ for all } 1 \leq q \leq i\}.\]
Since $y_j \in Q_2$, case 2 holds and so we have $y_{j+1} \in M$. Therefore $j+1\leq \ell-1$, $y_{j+1} \not\in \O$, and  $y_{j +1}\in Q_1$. Choose $x_{k+1}=z$ as in case 1. We terminate the construction when $x_{k+1} \in \O$.

This constructs a path $x_0,x_1,\dots,x_k,\dots$ belonging to $M$ such that $u-v$ is constant along the path, and $u$ is strictly increasing, i.e., 
\[u(x_0) < u(x_1) < \cdots < u(x_k) < \cdots.\]
Therefore, the path cannot revisit any point, and must eventually terminate at some $x_T \in \O$. Since $u-v$ is constant along the path, we have
\[\max_{\X_n} (u-v) = u(x_0) - v(x_0) = u(x_T) - v(x_T) \leq \max_\O (u-v),\]
which completes the proof.
\end{proof}
\begin{corollary}\label{cor:stability}
Assume the graph $\G_n=(\X_n,\W_n)$ is connected to $\O$. Let $u,v:\X_n \to \R$ satisfy 
\[L_n v(x)=0=L_n u(x) \ \text{ for all } \ x \in X_n.\]
Then
\begin{equation}\label{eq:stability}
\max_{\X_n}|u-v| = \max_{\O} |u-v|.
\end{equation}
\end{corollary}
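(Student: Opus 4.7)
The plan is to apply Theorem \ref{thm:maxprinc} twice, once in each direction, since the hypothesis $L_n u = 0 = L_n v$ means both functions are simultaneously subsolutions and supersolutions. First I would observe that the pair $(u,v)$ satisfies $L_n u(x) \geq 0 \geq L_n v(x)$ for all $x \in X_n$, so Theorem \ref{thm:maxprinc} gives $\max_{\X_n}(u-v) = \max_{\O}(u-v)$. Then, swapping the roles, the pair $(v,u)$ satisfies $L_n v(x) \geq 0 \geq L_n u(x)$, which yields $\max_{\X_n}(v-u) = \max_{\O}(v-u)$.

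Combining these two identities gives
\[
\max_{\X_n}|u-v| = \max\!\left\{ \max_{\X_n}(u-v),\, \max_{\X_n}(v-u) \right\} = \max\!\left\{ \max_{\O}(u-v),\, \max_{\O}(v-u) \right\} = \max_{\O}|u-v|,
\]
which is the claimed equality. (The inequality $\max_{\O}|u-v| \leq \max_{\X_n}|u-v|$ is automatic from $\O \subset \X_n$, so only one direction required work.)

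There is essentially no obstacle here: the corollary is a direct symmetrization of Theorem \ref{thm:maxprinc}. The only thing to be a little careful about is that the hypothesis of Theorem \ref{thm:maxprinc} was stated with strict inequalities $L_n u \geq 0 \geq L_n v$, so I should note that equality $L_n u = 0$ qualifies on both sides, allowing the roles of $u$ and $v$ to be interchanged. Connectedness of $\G_n$ to $\O$ is inherited from the standing assumption of the corollary, so no extra hypothesis check is needed.
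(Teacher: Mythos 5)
Your proof is correct and is exactly the intended argument: the paper states Corollary~\ref{cor:stability} without a written proof, treating it as an immediate consequence of Theorem~\ref{thm:maxprinc} applied with the roles of $u$ and $v$ swapped, which is precisely the symmetrization you carry out.
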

\begin{remark}\label{rem:unique}
Corollary \ref{cor:stability} shows that \eqref{eq:optinf}  has at most one solution, and the solution is stable under perturbations in the boundary conditions.
\end{remark}

Existence of a solution to \eqref{eq:optinf} was proved in~\cite{kyng2015algorithms,sheffield2012vector} as the absolutely minimal Lipschitz extension on a graph. It is also possible to prove existence via the Perron method, as was done in \cite{calder2018game} for the game theoretic $p$-Laplacian on a graph. We record these standard existence results in the following theorem. 
\begin{theorem}\label{thm:existunique}
Assume the graph $\G_n=(\X_n,\W_n)$ is connected to $\O$. Then there  exists a unique solution  $u_n:\X_n\to \R$ of \eqref{eq:optinf}.  Furthermore, there exists a constant  $C>0$ depending only on  $\O$ and $g$ such that 
\begin{equation}\label{eq:prior}
\min_\O g \leq u_n \leq \max_\O g,\text{ and}
\end{equation}
\begin{equation}\label{eq:lipest}
\max_{x,y\in \X_n}w_n(x,y)|u_n(x) -u_n(y)|\leq Ch_n.
\end{equation}
\end{theorem}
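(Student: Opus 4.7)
The plan is to derive all three conclusions from the maximum principle already established in Theorem \ref{thm:maxprinc}. Uniqueness is immediate from Corollary \ref{cor:stability}: any two solutions $u_1, u_2$ of \eqref{eq:optinf} agree on $\O$ and hence $\max_{\X_n}|u_1-u_2|=\max_\O|u_1-u_2|=0$. For existence, I would run the Perron method. Let
\[\mathcal{S} := \left\{ v:\X_n \to \R \,:\, L_n v \geq 0 \text{ on } X_n \text{ and } v \leq g \text{ on } \O \right\}.\]
This set is non-empty (it contains $v\equiv\min_\O g$) and, by Theorem \ref{thm:maxprinc} applied against the constant supersolution $\max_\O g$, every element is bounded above by $\max_\O g$. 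Define $u_n(x) := \sup_{v\in\mathcal{S}} v(x)$. Since $\X_n$ is finite and the operators $\max_y w_n(x,y)(\cdot)$ and $\min_y w_n(x,y)(\cdot)$ are monotone under pointwise increasing limits, $u_n\in\mathcal{S}$. A standard contradiction argument then shows that $u_n$ solves \eqref{eq:optinf}: if either $L_n u_n(x_0)>0$ at some $x_0\in X_n$ or $u_n(x_0)<g(x_0)$ at some $x_0\in\O$, one bumps $u_n$ upward by a small $\eps>0$ at $x_0$ alone to produce a strictly larger element of $\mathcal{S}$, contradicting maximality. Alternatively one can cite the absolutely minimal Lipschitz extension construction of \cite{kyng2015algorithms,sheffield2012vector}, which by uniqueness produces the same function.

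For the $L^\infty$ bound \eqref{eq:prior}, apply Theorem \ref{thm:maxprinc} against the constants $M:=\max_\O g$ and $m:=\min_\O g$: both satisfy $L_n\cdot=0$, and since $u_n - M \leq 0$ and $u_n - m \geq 0$ on $\O$, these inequalities propagate to all of $\X_n$. For the gradient-type bound \eqref{eq:lipest}, the key point is that $u_n$ is the lexicographic minimizer of \eqref{eq:Lp}, so its largest weighted gradient does not exceed that of any competitor. Take as competitor the restriction $v:=\tilde g|_{\X_n}$, where $\tilde g:\T^d\to\R$ is a Lipschitz extension of $g$ (e.g.\ McShane-Whitney) with Lipschitz constant $L$ depending only on $g$ and $\O$. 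Since $w_n(x,y)>0$ forces $|x-y|<2h_n$ by the support condition \eqref{eq:phi}, we estimate
\[
w_n(x,y)\,|u_n(x)-u_n(y)|\ \leq\ \max_{x',y'\in\X_n}w_n(x',y')\,|\tilde g(x')-\tilde g(y')|\ \leq\ 2L\,\|w_n\|_\infty\,h_n,
\]
and absorbing $\|w_n\|_\infty\leq\|\Phi\|_\infty\|d_n\|_\infty^{2\alpha}$ into the constant gives \eqref{eq:lipest}.

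The main obstacle is the Perron contradiction step, which must verify that bumping $u_n(x_0)$ upward by $\eps$ preserves $L_n \geq 0$ at \emph{every} neighbor of $x_0$ as well as at $x_0$ itself. At a neighbor $z$, the perturbation raises $u_n(x_0)-u_n(z)$, which can only increase the max term and raise (make less negative) the min term entering $L_n \tilde u(z)$; at $x_0$ itself, both max and min decrease continuously in $\eps$, so for $\eps$ small enough the strict inequality $L_n u_n(x_0)>0$ persists. This is routine on a finite graph but is the one non-trivial ingredient beyond the maximum principle. A secondary subtlety is that the constant $C$ in \eqref{eq:lipest} tacitly absorbs a uniform bound on $\|w_n\|_\infty$; this is automatic when $\alpha=0$ and follows from $R_n\to 0$ under the hypotheses of Theorem \ref{thm:pinf2}.
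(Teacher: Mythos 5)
Your proof is correct and follows essentially the same approach as the paper: uniqueness from Corollary~\ref{cor:stability}, the $L^\infty$ bound from Theorem~\ref{thm:maxprinc} by comparison with constants, and the gradient bound \eqref{eq:lipest} by comparing $u_n$ against (the restriction of) a Lipschitz extension of $g$ using the minimizing property of $u_n$ --- the paper uses a $C^1$ extension $\varphi$ where you use McShane--Whitney, a cosmetic difference, and where you spell out a Perron argument for existence the paper simply cites \cite{calder2018game,kyng2015algorithms,sheffield2012vector}. One small inaccuracy: for $\alpha<0$ the bound $\|w_n\|_\infty\le\|\Phi\|_\infty\|d_n\|_\infty^{2\alpha}$ should involve $\inf d_n$ rather than $\|d_n\|_\infty$, but your surrounding observation that the constant in \eqref{eq:lipest} tacitly absorbs a uniform bound on $\|w_n\|_\infty$ is correct and, if anything, a more careful reading than the paper's statement that $C$ depends only on $\O$ and $g$.
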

\begin{proof}
Existence of a solution follows from~\cite{calder2018game,kyng2015algorithms,sheffield2012vector} and uniqueness follows from Corollary \ref{cor:stability}. The \emph{a priori} estimate \eqref{eq:prior} follows from Theorem \ref{thm:maxprinc}. All that is left to prove is \eqref{eq:lipest}.  Let  $\varphi \in C^1(\T^d)$  such that $\varphi (x)=g(x)$  for all $x\in \O$. Since  $u_n$  is the absolutely minimal Lipschitz extension of $g$  to the graph $\G_n$, we have that 
\[\max_{x,y\in \X_n}w_n(x,y)|u_n(x) -u_n(y)|\leq \max_{x,y\in \X_n}w_n(x,y)|\varphi (x) -\varphi (y)|\leq C\|\nabla  \varphi \|_{L^\infty(\T^d)}h_n.\]      
\end{proof}

\section{Consistency for smooth functions}
\label{sec:cons}

In this section we prove consistency for the graph $\infty$-Laplacian for smooth functions. Even though the viscosity solutions of the $\infty$-Laplace equations \eqref{eq:infLap} and \eqref{eq:infLap2} are not smooth, the viscosity solution framework allows for checking consistency only with smooth functions. 

We define the nonlocal operator 
\begin{align}\label{eq:T}
H_nu(x) &:=  \max_{y \in \T^d}\left\{ f(x)^\alpha f(y)^\alpha\sigma_n(x,y)(u(y) -u(x))  \right\}\\
&\hspace{1in}+ \min_{y \in \T^d}\left\{  f(x)^\alpha f(y)^\alpha \sigma_n(x,y) (u(y) -u(x))\right\}.\notag
\end{align}
The proof of consistency is split into two steps. First, in Lemma \ref{lem:approximation1} we show that  $L_n$  can be approximated by  $H_n$.  Then, in Theorem \ref{thm:consistency} we prove that  $H_n$ is consistent with the $\infty$-Laplace operator  $\Delta _\infty$ in the limit as  $n\to \infty$.      

\begin{lemma}\label{lem:approximation1}
Let  $\varphi \in C^2(\R^d)$.  Then 
\begin{equation}\label{eq:approximation1}
|L_n\varphi (x) -H_n\varphi (x)|\leq C\left( \|\nabla  \phi\|_\infty +h_n \|\nabla  ^2\phi\|_\infty \right) (r^2_nh_n^{ -1} + |\alpha| R_n h_n),
\end{equation}
and
\begin{equation}\label{eq:approximation2}
|L_n\varphi (x) -H_n\varphi (x)|\leq C\|\nabla  \phi\|_\infty (r_nh_n^{ -1}  + |\alpha| R_n h_n).
\end{equation}
\end{lemma}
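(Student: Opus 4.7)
The plan is to decompose the error into a weight-substitution piece and a discretization piece. Setting $b_n(x,y):=f(x)^\alpha f(y)^\alpha\sigma_n(x,y)(\varphi(y)-\varphi(x))$ and
\[\tilde L_n\varphi(x):=\max_{y\in\X_n}b_n(x,y)+\min_{y\in\X_n}b_n(x,y),\]
I write $L_n\varphi(x)-H_n\varphi(x)=\bigl(L_n\varphi(x)-\tilde L_n\varphi(x)\bigr)+\bigl(\tilde L_n\varphi(x)-H_n\varphi(x)\bigr)$. The first difference isolates the effect of replacing $d_n$ by $f$ in the weights while keeping the max/min over $\X_n$; the second captures the discretization error in passing to a max/min over $\T^d$.

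Because $\sigma_n(x,y)$ is supported on $|x-y|\leq 2h_n$, whenever it is nonzero we have $|\varphi(y)-\varphi(x)|\leq 2h_n\|\nabla\varphi\|_\infty$. Combining this with the elementary bound $|d_n(x)^\alpha d_n(y)^\alpha - f(x)^\alpha f(y)^\alpha|\leq C|\alpha|R_n$ (valid because $f$ is bounded away from zero and $d_n\to f$ uniformly at rate $R_n$) yields
\[|L_n\varphi(x)-\tilde L_n\varphi(x)|\leq C|\alpha|R_nh_n\|\nabla\varphi\|_\infty,\]
which accounts for the $|\alpha|R_nh_n$ term in both estimates.

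For the discretization piece I treat the maximum (the minimum is symmetric). Let $y^*\in\T^d$ attain the max of $b_n(x,\cdot)$ and pick $y_0\in\X_n$ with $|y_0-y^*|\leq r_n$, so that $0\leq\max_{\T^d}b_n(x,\cdot)-\max_{\X_n}b_n(x,\cdot)\leq b_n(x,y^*)-b_n(x,y_0)$. When $y^*\neq x$, $b_n(x,\cdot)$ is $C^2$ near $y^*$ (this is where $\Phi\in C^2$ is used) and $y^*$ is an interior critical point on the boundaryless $\T^d$, so a second-order Taylor expansion gives $b_n(x,y^*)-b_n(x,y_0)\leq \tfrac{1}{2}r_n^2\|\nabla_y^2 b_n(x,\cdot)\|_\infty$. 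Differentiating $b_n$ twice in $y$ and tracking that each derivative on $\sigma_n$ brings out a factor $h_n^{-1}$ while the factor $|\varphi(y)-\varphi(x)|=O(h_n\|\nabla\varphi\|_\infty)$ inside $\supp\sigma_n(x,\cdot)$ absorbs one of them yields
\[\|\nabla_y^2 b_n(x,\cdot)\|_\infty\leq Ch_n^{-1}\bigl(\|\nabla\varphi\|_\infty+h_n\|\nabla^2\varphi\|_\infty\bigr),\]
producing the $r_n^2h_n^{-1}$ term of \eqref{eq:approximation1}. The degenerate case $y^*=x$ is handled directly, since then $b_n(x,y^*)=0$ and $|b_n(x,y_0)|\leq C r_n\|\nabla\varphi\|_\infty$, which is even smaller than required.

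For the weaker estimate \eqref{eq:approximation2} the Taylor step is replaced by $b_n(x,y^*)-b_n(x,y_0)\leq r_n\|\nabla_y b_n(x,\cdot)\|_\infty$, and an analogous but shorter computation gives $\|\nabla_y b_n(x,\cdot)\|_\infty\leq C\|\nabla\varphi\|_\infty$, which is absorbed into $Cr_nh_n^{-1}\|\nabla\varphi\|_\infty$ once $h_n\leq 1$. The main obstacle is the careful derivative bookkeeping for $b_n$ in $y$: the singular factors $h_n^{-1},h_n^{-2}$ coming from derivatives of $\sigma_n$ must be partly cancelled by the small oscillation $|\varphi(y)-\varphi(x)|=O(h_n\|\nabla\varphi\|_\infty)$ on the support of $\sigma_n$, and tracking this cancellation is what forces the $C^2$ hypothesis on $\Phi$ and $\varphi$ to appear in the sharper estimate \eqref{eq:approximation1}.
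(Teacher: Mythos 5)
Your argument is the paper's argument in a slightly different packaging: your $b_n(x,\cdot)$ is the paper's $\psi$, and both proofs rest on the maximizer over the boundaryless torus being an interior critical point, combined with the Hessian bound $\|\nabla_y^2 b_n(x,\cdot)\|_\infty \leq C h_n^{-1}(\|\nabla\varphi\|_\infty + h_n\|\nabla^2\varphi\|_\infty)$ that exploits the cancellation between the $h_n^{-2}$ from $\sigma_n$ and the $O(h_n)$ oscillation of $\varphi$ on $\supp\sigma_n(x,\cdot)$. Your explicit split into a weight-substitution piece and a discretization piece is a cleaner presentation of the same two estimates, and your $C^1$-only derivation of \eqref{eq:approximation2} is correct.

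The one step that does not close as written is the degenerate case $y^*=x$: you bound $|b_n(x,y_0)|\leq Cr_n\|\nabla\varphi\|_\infty$ and assert this is ``even smaller than required,'' but the target in \eqref{eq:approximation1} is of order $r_n^2 h_n^{-1}\|\nabla\varphi\|_\infty$, which is \emph{smaller} than $r_n\|\nabla\varphi\|_\infty$ in the relevant regime $r_n\ll h_n$ (recall \eqref{eq:rn} only gives $r_n^2\ll h_n^3$, so $r_n^2 h_n^{-1}=r_n\cdot(r_n/h_n)\ll r_n$). To repair it, notice that $y^*=x$ means $\max_{\T^d}b_n(x,\cdot)=0$, which since $\Phi\geq 1$ on $(0,1)$ forces $\varphi(y)\leq\varphi(x)$ for $|y-x|<h_n$ and hence $\nabla\varphi(x)=0$; a second-order Taylor expansion of $\varphi$ then yields $|b_n(x,y_0)|\leq C\|\nabla^2\varphi\|_\infty r_n^2 = Ch_n\|\nabla^2\varphi\|_\infty\cdot r_n^2 h_n^{-1}$, which fits. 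Alternatively---and this is what the paper does implicitly by not splitting cases---the vanishing factor $\varphi(y)-\varphi(x)$ absorbs the kink of $y\mapsto|x-y|$ at $y=x$, so $b_n(x,\cdot)$ is $C^{1,1}$ on all of $\T^d$ with the Hessian bound holding almost everywhere, and the semiconcavity inequality $b_n(x,y)\geq b_n(x,y^*)-C\|\nabla_y^2 b_n\|_\infty|y-y^*|^2$ holds uniformly in $y^*$, including $y^*=x$.
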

\begin{proof}
Set
\[\psi(y) = f(x)^\alpha f(y)^\alpha\Phi\left(\frac{|x-y|}{h_n}\right)(\varphi (y) -\varphi (x)),\]
and note that
\[\|\nabla  ^2\psi\|_\infty \leq C\left( \frac{1}{h_n}\|\nabla  \phi\|_\infty + \|\nabla  ^2\phi\|_\infty \right).\]
Let $y_0\in \T^d$ be a point at which $\psi$ attains its maximum value. Since $\psi$ is $C^2$ and $\nabla  \psi(y_0)=0$ we have
\[\psi(y) \geq \psi(y_0)  - \frac{1}{2}\|\nabla  ^2\psi\|_\infty |y-y_0|^2\]
for all $y$. Set  $t=\psi(y_0) - \max_{\X_n}\psi $. Then for any  $y\in \X_n$    we have
\[\frac{1}{2}\|\nabla  ^2\psi\|_\infty |y-y_0|^2\geq \psi (y_0) -\psi (y) \geq t.\]
It follows that 
\[r^2_n\geq \frac{ 2t}{\|\nabla  ^2\psi\|_\infty },\]
and hence 
\[\max_{\T^d}\psi - \max_{\X_n}\psi\leq \frac{1}{2}\|\nabla  ^2\psi\|_\infty  r^2_n. \]
A similar argument shows that 
\[\min_{\X_n}\psi-\min_{\T^d}\psi  \leq \frac{1}{2}\|\nabla  ^2\psi\|_\infty  r^2_n. \]
Since $|d_n(x)^\alpha - f(x)^\alpha|\leq  C|\alpha| R_n$ this yields
\begin{align*}
|L_n\varphi (x) - H_n\varphi(x)|&\leq \min_{\X_n}\psi-\min_{\T^d}\psi-(\max_{\T^d}\psi - \max_{\X_n} \psi) + C\|\nabla \varphi\|_\infty |\alpha|R_n h_n \\
&\leq C(\|\nabla \phi\|_\infty + h_n\|\nabla ^2\phi\|_\infty)(r_n^2 h_n^{-1} + |\alpha| R_n h_n).
\end{align*}
This completes the proof of \eqref{eq:approximation1}.
The proof of \eqref{eq:approximation2} is similar. 
\end{proof}

Before proceeding, we need an elementary proposition.
\begin{proposition}\label{prop:max}
For any $p\in\R^d$ with $p\neq 0$ and $A\in \R^{d\times d}$ we have
\begin{equation}\label{eq:maxTE}
\left|\max_{|v|=r}\left\{ p\cdot v + \tfrac{1}{2}v\cdot A v  \right\}  - r|p| - \tfrac{1}{2}r^2|p|^{-2}p\cdot Ap\right| \leq 2r^3\|A\|^2|p|^{-1}
\end{equation}
for all $r>0$.
\end{proposition}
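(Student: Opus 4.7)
The plan is to parametrize $v = ru$ with $|u| = 1$, so that the task becomes estimating $F(u) := r\,p\cdot u + \tfrac{r^2}{2}\,u\cdot Au$ on the unit sphere. The closed-form expression $r|p| + \tfrac{r^2}{2|p|^2}\,p\cdot Ap$ is precisely $F(\hat{p})$ with $\hat{p} := p/|p|$, so the lower half of \eqref{eq:maxTE} is immediate by using $\hat{p}$ as a test direction. All the work lies in the one-sided upper bound.

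For the upper bound I would expand around $\hat{p}$, writing a general competitor as $u = \hat{p} + w$. The constraint $|u| = 1$ is equivalent to $\hat{p}\cdot w = -\tfrac{1}{2}|w|^2$, which in particular forces $|w|\leq 2$ and reduces $p\cdot u$ to $|p| - \tfrac{|p|}{2}|w|^2$. Expanding $u\cdot Au$ as well, a direct computation yields
\[
F(u) - F(\hat{p}) \;=\; -\tfrac{r|p|}{2}|w|^2 + \tfrac{r^2}{2}\,\hat{p}^\top(A + A^\top)w + \tfrac{r^2}{2}\,w\cdot Aw.
\]
Bounding the last two terms with the operator norm of $A$ (and dropping the sign on the first), it suffices to prove that for $t := |w| \in [0,2]$,
\[
h(t) \;:=\; -\tfrac{r|p|}{2}\,t^2 + r^2\|A\|\,t + \tfrac{r^2\|A\|}{2}\,t^2 \;\leq\; \frac{2r^3\|A\|^2}{|p|}.
\]

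This reduces the whole proposition to a short one-variable analysis. Depending on the sign of the $t^2$-coefficient $\tfrac{1}{2}(r^2\|A\| - r|p|)$, the maximum of $h$ on $[0,2]$ is either attained at an interior critical point (the concave regime, where one can compute $h(t^*) = \tfrac{r^3\|A\|^2}{2(|p|-r\|A\|)}$ and compare directly with $2r^3\|A\|^2/|p|$) or at the endpoint $t = 2$. In the endpoint case the cleanest route is to set $s := |p|/(r\|A\|)$, at which point the desired bound $|p|(4r^2\|A\| - 2r|p|) \leq 2r^3\|A\|^2$ reads exactly $(s-1)^2 \geq 0$. The main subtlety I expect is stitching the regimes together, in particular the range where $|p|$ is comparable to or smaller than $r\|A\|$ so that $2r^3\|A\|^2/|p|$ is already large; the point is that this is exactly where the right-hand side is most generous, and the identity $(s-1)^2 \geq 0$ carries us across all regimes uniformly.
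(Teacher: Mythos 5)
Your approach is correct, but it is a genuinely different argument from the paper's. The paper works with the (unknown) maximizer $v_r$ on the sphere $|v|=r$, compares it against the test vector $w_r := r p/|p|$, and closes a short bootstrap: the upper estimate $C_r \leq p\cdot v_r + \tfrac{1}{2}w_r\cdot Aw_r + r\|A\|\,|w_r-v_r|$ together with the lower estimate $C_r\geq r|p|+\tfrac{1}{2}w_r\cdot Aw_r$ (test $w_r$) control the Cauchy--Schwarz gap $r|p|-p\cdot v_r$, and the algebraic identity $|w_r-v_r|^2 = 2r|p|^{-1}(r|p|-p\cdot v_r)$ then gives the self-improving bound $|w_r-v_r|\leq 2r^2\|A\|\,|p|^{-1}$ with no case analysis. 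You instead parametrize the sphere explicitly around $\hat p$ and reduce everything to a one-variable quadratic $h(t)$ over $t\in[0,2]$; your accounting is consistent: in the concave regime $|p|>r\|A\|$ the vertex value $h(t^*)=\tfrac{r^3\|A\|^2}{2(|p|-r\|A\|)}$ satisfies $h(t^*)\leq 2r^3\|A\|^2|p|^{-1}$ precisely when $|p|\geq\tfrac{4}{3}r\|A\|$, while $t^*\in[0,2]$ forces $|p|\geq\tfrac{3}{2}r\|A\|$, so the vertex bound always applies when it is needed; in the complementary range (convex, or concave with $t^*>2$) the maximum over $[0,2]$ is at $t=2$, where the claim reduces to $(s-1)^2\geq 0$ with $s=|p|/(r\|A\|)$, as you observe. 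Your route is more elementary and fully explicit, at the cost of splitting into regimes; the paper's is terser and avoids casework, at the cost of a less obvious self-bounding step. Both land exactly on the constant $2$.
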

\begin{proof}
Let $C_r=\max_{|v|=r}\left\{ p\cdot v + \tfrac{1}{2}v\cdot A v  \right\} $. Then there exists $v_r$ with $|v_r|=r$ such that
\begin{equation}\label{eq:maxob}
C_r = p\cdot v_r + \frac{1}{2}v_r\cdot Av_r.
\end{equation}
Let $w_r := rp/|p|$. Then we have
\begin{align}\label{eq:upperbd}
C_r&=p\cdot v_r + \frac{1}{2}w_r \cdot Aw_r + \frac{1}{2}(v_r\cdot A v_r - w_r\cdot A w_r)\\
&=p\cdot v_r + \frac{1}{2}w_r \cdot Aw_r + \frac{1}{2}( (v_r-w_r)\cdot A v_r + w_r\cdot A(v_r - w_r))\notag\\
&\leq p\cdot v_r +\frac{1}{2}w_r \cdot Aw_r + \frac{1}{2}(|w_r-v_r|\|A\| |v_r| + |w_r|\|A\| |w_r-v_r|)\notag\\
&\leq  p\cdot v_r +\frac{1}{2}w_r \cdot Aw_r + r\|A\||w_r-v_r|.\notag
\end{align}
Substituting $w_r$ into the definition of $C_r$ we also have
\begin{equation}\label{eq:lowerbd}
C_r \geq r|p| + \frac{1}{2}w_r\cdot A w_r.
\end{equation}
Combining this with \eqref{eq:upperbd} we have
\begin{equation}\label{eq:pdotv}
r|p| - p\cdot v_r \leq r\|A\| |w_r-v_r|.
\end{equation}
It follows that
\begin{align*}
|w_r - v_r|^2&=|w_r|^2 - 2w_r\cdot v_r + |v_r|^2\\
&=2(r^2 - w_r\cdot v_r)\\
&=2r|p|^{-1}(r|p| - p \cdot v_r)\\
&\leq 2r^2\|A\||p|^{-1}|w_r-v_r|,
\end{align*}
and hence
\begin{equation}\label{eq:wrvr}
|w_r-v_r| \leq 2r^2 \|A\| |p|^{-1}.
\end{equation}
Recalling \eqref{eq:upperbd} we have
\[C_r \leq r|p| + \frac{1}{2}w_r\cdot A w_r + 2r^3\|A\|^2|p|^{-1}.\]
Combining this with \eqref{eq:lowerbd} completes the proof.
\end{proof}

We now prove consistency.
\begin{theorem}\label{thm:consistency}
Let $\phi \in C^3(\R^d)$ and $x_0\in \T^d$. If $\nabla  \varphi (x_0)\neq  0$ and
\begin{equation}\label{eq:negop}
\Delta_\infty \varphi(x_0) + 2\alpha \nabla \log f(x_0) \cdot \nabla \varphi(x_0) < 0,
\end{equation}
then for any sequence $x_n \to x_0$ we have
\begin{equation}\label{eq:consistencyalpha}
\limsup_{n \to \infty} \frac{1}{h_n^2}H_n \phi(x_n) < 0.
\end{equation}
\end{theorem}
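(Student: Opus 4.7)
The plan is to Taylor-expand $\phi$ and $f^\alpha$ around $x_n$ and thereby decouple the inner max/min in $H_n\phi(x_n)$ into a radial and an angular optimization. First I would parametrize $y = x_n + rw$ with $r \ge 0$ and $|w|=1$, and use $\phi \in C^3$, $f \in C^2$, and $f(x_0) > 0$ (implicit since $\nabla \log f(x_0)$ appears in \eqref{eq:negop}) to expand
\[f(x_n)^\alpha f(x_n+rw)^\alpha \sigma_n(x_n,y)(\phi(y)-\phi(x_n)) = f(x_n)^{2\alpha}\Phi(r/h_n)\Bigl[r\nabla\phi\cdot w + \tfrac{r^2}{2}\, w^T A_n w + O(r^3)\Bigr],\]
where all derivatives are taken at $x_n$, the symmetric matrix is $A_n := D^2\phi + \alpha(\nabla \log f \otimes \nabla\phi + \nabla\phi \otimes \nabla\log f)$, and the $O(r^3)$ remainder is uniform in $w$ over the effective range $r \in [0, 2h_n]$ (outside which $\Phi$ vanishes).

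Second, I would apply Proposition \ref{prop:max} with $p = \nabla\phi(x_n)$ and $A = A_n$---valid since $|\nabla\phi(x_n)|$ stays bounded away from $0$ and $\|A_n\|$ stays bounded above by the regularity hypotheses---to compute
\[\max_{|w|=1}\bigl\{r\nabla\phi\cdot w + \tfrac{r^2}{2}\, w^T A_n w\bigr\} = r p_n + \tfrac{r^2}{2}S_n + O(r^3), \qquad \min_{|w|=1}\{\cdot\} = -r p_n + \tfrac{r^2}{2}S_n + O(r^3),\]
with $p_n := |\nabla\phi(x_n)|$ and
\[S_n := |\nabla\phi(x_n)|^{-2}\nabla\phi(x_n)\cdot A_n \nabla\phi(x_n) = \Delta_\infty \phi(x_n) + 2\alpha\,\nabla\log f(x_n)\cdot\nabla\phi(x_n),\]
so that by \eqref{eq:negop} and continuity $S_n \to S(x_0) < 0$. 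The structural fact I would then exploit is that both expressions share the same second-order coefficient $S_n$: writing $\Psi^\pm(r) := \Phi(r/h_n)[\pm r p_n + \tfrac{r^2}{2} S_n + O(r^3)]$ and letting $r_n^+$ be a maximizer of $\Psi^+$ on $[0,2h_n]$, I would bound the outer minimum by evaluating $\Psi^-$ at $r = r_n^+$ (not at its own minimizer). This makes the linear-in-$r$ terms cancel exactly and yields
\[H_n\phi(x_n) \le f(x_n)^{2\alpha}\bigl[\Psi^+(r_n^+) + \Psi^-(r_n^+)\bigr] = f(x_n)^{2\alpha}\Bigl[\Phi(r_n^+/h_n)(r_n^+)^2 S_n + O(h_n^3)\Bigr].\]

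The final step is to show $\Phi(r_n^+/h_n)(r_n^+)^2 \ge c\, h_n^2$ for some $c > 0$. Testing against $r = h_n$ and using $\Phi(1) \ge 1$ from \eqref{eq:phi} gives $\Psi^+(r_n^+) \ge \Psi^+(h_n) \ge h_n p_n - Ch_n^2$, while trivially $\Psi^+(r_n^+) \le \Phi(r_n^+/h_n)\, r_n^+ p_n + Ch_n^2$. Combined, these force $\Phi(r_n^+/h_n)\, (r_n^+/h_n) \ge 1/4$ for $n$ large; boundedness of $\Phi$ then gives both $r_n^+/h_n \ge c$ and $\Phi(r_n^+/h_n) \ge c$, so $\Phi(r_n^+/h_n)(r_n^+)^2 \ge c\,h_n^2$. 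Dividing by $h_n^2$ and using $S_n \to S(x_0) < 0$ together with $f(x_n)^{2\alpha} \to f(x_0)^{2\alpha} > 0$ yields $\limsup_{n\to\infty} h_n^{-2}H_n\phi(x_n) \le c\, f(x_0)^{2\alpha} S(x_0) < 0$. The main obstacle is precisely this last step: a priori $r_n^+$ could degenerate to $0$ or fall into a region where $\Phi$ vanishes, either of which would collapse the negative second-order term; the hypothesis $\Phi \ge 1$ on $(0,1)$ is exactly what pins $r_n^+$ into a productive range and keeps the strict negativity of $S(x_0)$ alive in the limit.
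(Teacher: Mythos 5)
Your argument is correct and follows essentially the same route as the paper's proof: Taylor expand, apply Proposition~\ref{prop:max} to resolve the angular optimization, cancel the linear terms by evaluating the min-branch at the max-branch's optimal radius, and then use a test-point argument exploiting $\Phi \geq 1$ on $(0,1)$ to pin the optimal radius into a range where $\Phi(r/h_n)r^2 \gtrsim h_n^2$. The only differences are cosmetic (you symmetrize $A$, which changes nothing in the quadratic form, and test at $r=h_n$ rather than at $\argmax_s s\Phi(s)$); your explicit step extracting that both $\Phi(r_n^+/h_n)$ and $r_n^+/h_n$ are individually bounded below is if anything slightly more careful than the paper's terse conclusion that $s_n \geq c$.
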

\begin{proof}
Let $x\in \T^d$ and define
\begin{equation}\label{eq:Bdef}
B_n(x) = \max_{y \in \T^d}\left\{ f(x)^\alpha f(y)^\alpha \Phi(h_n^{ -1}|x-y|) (\phi(y) - \phi(x))\right\}.
\end{equation}
By Taylor expansion we have
\[f(y)^\alpha =f(x)^\alpha + \alpha f(x)^{\alpha-1} \nabla f(x)\cdot (y-x) + O(|x-y|^2).\]
Since the supremum in \eqref{eq:Bdef} is attained for $|x-y|\leq 2h_n$ we have
\begin{align*}
B_n(x)&=f(x)^{2\alpha}\max_{y\in B(x,2h_n)}\left\{ \Phi(h_n^{ -1}|x-y|)(1 + \alpha \nabla \log f(x)\cdot (y-x)) (\varphi(y)-\varphi(x)) \right\} \\
&\hspace{4in}+ O(h_n^3).
\end{align*}
Setting $v=y-x$ and continuing to Taylor expand yields
\begin{align*}
\frac{B_n(x)}{f(x)^{2\alpha}}&= \max_{v\in B(0,2h_n)}\Big\{ \Phi(h_n^{-1}|v|)\Big(\nabla \varphi(x)\cdot v + \tfrac{1}{2}v\cdot \nabla ^2\varphi(x)v \\
&\hspace{1.75in}+ \alpha (\nabla \log f(x)\cdot v)(\nabla \varphi(x)\cdot v)\Big)\Big\} + O(h_n^3)\\
& = \max_{v\in B(0,2h_n)}\left\{ \Phi(h_n^{-1}|z|)\left( p\cdot v + \tfrac{1}{2}v\cdot Av  \right) \right\} + O(h_n^3),
\end{align*}
where
\[p = \nabla \varphi(x) \ \ \text{ and } \ \ A =\nabla^2 \varphi(x) + 2\alpha \nabla \log f(x)\otimes \nabla \varphi(x).\]
By Proposition \ref{prop:max} 
\begin{align*}
\frac{B_n(x)}{f(x)^{2\alpha}}&= \max_{0 \leq r \leq 2h_n}\left\{ \Phi(h_n^{-1}r)\max_{|v|=r}\left\{ p\cdot v + \tfrac{1}{2}v\cdot Av  \right\} \right\} + O(h_n^3)\\
&= \max_{0 \leq r \leq 2h_n}\left\{ \Phi(h_n^{-1}r)\left( |p|r+ \tfrac{r^2}{2|p|^{2}}p\cdot Ap\right) \right\} + O(h_n^3)\notag\\
&= \max_{0 \leq r \leq 2h_n}\left\{ \Phi(h_n^{-1}r)\left( |\nabla \varphi|r+ \tfrac{1}{2}\Delta_\infty \varphi r^2 + \alpha\nabla \log f\cdot \nabla \varphi r^2\right) \right\} + O(h_n^3)\notag\\
&= \max_{0 \leq s \leq 2}\left\{ s\Phi(s)|\nabla \varphi|h_n+ \tfrac{1}{2}s^2\Phi(s)\left(\Delta_\infty \varphi + 2\alpha\nabla \log f\cdot \nabla \varphi\right)h_n^2 \right\} + O(h_n^3).\notag
\end{align*}
Similarly, for
\[B_n'(x) := \min_{y \in \T^d}\left\{ f(x)^\alpha f(y)^\alpha \Phi(h_n^{ -1}|x-y|) (\phi(y) - \phi(x))\right\},\]
we have
\[\frac{B_n'(x)}{f(x)^{2\alpha}}= \min_{0 \leq s \leq 2}\left\{ -s\Phi(s)|\nabla \varphi|h_n+ \tfrac{1}{2}s^2\Phi(s)\left(\Delta_\infty \varphi + 2\alpha\nabla \log f\cdot \nabla \varphi\right)h_n^2 \right\} + O(h_n^3).\]

Now, let $s_n\in [0,2]$ such that
\begin{align*}
B_n(x_n) &= s_n\Phi(s_n)|\nabla \varphi(x_n)|h_n\\
&\hspace{0.5in}+ \tfrac{1}{2}s_n^2\Phi(s_n)\Big(\Delta_\infty \varphi(x_n) +2\alpha\nabla \log f(x_n)\cdot \nabla \varphi(x_n)\Big)h_n^2+ O(h_n^3).
\end{align*}
Then  we have
\begin{align}\label{eq:almost}
H_n\varphi (x_n)&= B_n(x_n) +B_n'(x_n)\\
&\leq f(x_n)^{2\alpha}s_n^2\Phi (s_n)(\Delta _\infty\varphi (x_n) + 2\alpha\nabla \log f(x_n)\cdot \nabla \varphi(x_n))h_n^2 +Ch_n^3.\notag
\end{align}
Let $c_0 = \max_{0 \leq s \leq 2} s\Phi(s)>0$ and set $\delta:=|\nabla \varphi(x_0)|$, recalling that $\delta>0$ by assumption. We may assume $x_n$ is close enough to $x_0$ so that $|\nabla \varphi(x_n)|>\frac{\delta}{2}$. Then we have
\[\frac{1}{2}\delta c_0 h_n - Ch_n^2 \leq B_n(x_n) \leq C(s_nh_n + h_n^2).\]
It follows that there exists a constant $c>0$ such that $s_n \geq 2c-h_n$, and so $s_n\geq c>0$ for $n$ sufficiently large. Combining this with \eqref{eq:almost} completes the proof.
\end{proof}

\section{Proof of main results}
\label{sec:proof}
In this section we prove our main results, Theorems \ref{thm:pinf} and \ref{thm:pinf2}. The first step is to prove a Lipschitz  estimate on the sequence  $u_n$ (Lemma \ref{lem:lip}), which gives us compactness. Then we introduce the notion of viscosity solution for the $\infty$-Laplace equation, and complete the proof of Theorems \ref{thm:pinf} and \ref{thm:pinf2}. 
    
\begin{lemma}\label{lem:lip}
There exists $C>0$ such that whenever $r_n\leq h_n/(8\sqrt{d})$ and $R_n \leq \tfrac{1}{2}\inf_{\T^d}f$ we have 
\begin{equation}\label{eq:lip}
|u_n(x) -u_n(y)|\leq C(|x -y| +h_n)\quad \text{ for all } x,y\in \X_n.
\end{equation}
\end{lemma}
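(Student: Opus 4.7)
\medskip

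\noindent\textbf{Proof plan.} The strategy is to upgrade the pointwise bound \eqref{eq:lipest} from Theorem \ref{thm:existunique} to a Lipschitz-type estimate by chaining it along a short path in the graph. The key observation is that if $|x-y| < h_n$, then the \emph{edge weight} $w_n(x,y)$ is uniformly bounded below, so \eqref{eq:lipest} directly yields $|u_n(x)-u_n(y)| \leq C h_n$, and a chain argument produces the estimate for general pairs.

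\medskip

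\noindent\textbf{Step 1 (one-step bound).} Suppose $x,y\in\X_n$ with $|x-y|<h_n$. By \eqref{eq:phi}, $\sigma_n(x,y) = \Phi(|x-y|/h_n) \geq 1$. If $\alpha=0$, then $w_n(x,y)=\sigma_n(x,y)\geq 1$. If $\alpha\neq 0$, the hypothesis $R_n\leq \tfrac12\inf_{\T^d} f$ together with $f\in C^2(\T^d)$ bounded above and below by positive constants (implicit in the hypothesis) gives
\[
\tfrac{1}{2}\inf_{\T^d} f \;\leq\; d_n(z)\;\leq\; \sup_{\T^d} f + \tfrac{1}{2}\inf_{\T^d} f \qquad \text{for all } z\in \X_n,
\]
so $d_n(z)^\alpha$ is bounded between two positive constants independent of $n$. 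In either case there is $c_0>0$ with $w_n(x,y)\geq c_0$ whenever $|x-y|<h_n$. Combining with \eqref{eq:lipest} yields a constant $C_1>0$ such that
\[
|u_n(x)-u_n(y)| \;\leq\; C_1 h_n \qquad \text{whenever } x,y\in\X_n,\ |x-y|<h_n.
\]

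\medskip

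\noindent\textbf{Step 2 (chaining).} Fix arbitrary $x,y\in\X_n$ and let $\gamma:[0,1]\to\T^d$ be a geodesic segment from $x$ to $y$ of length $|x-y|$. Set $K = \lceil 4|x-y|/h_n\rceil$ and $z_i = \gamma(i/K)$ for $i=0,\dots,K$, so that $|z_i - z_{i+1}| \leq |x-y|/K \leq h_n/4$. For each $i$, pick $x_i\in \X_n$ with $|x_i-z_i|\leq r_n$, taking $x_0=x$ and $x_K=y$; this is possible by \eqref{eq:r}. Using $r_n\leq h_n/(8\sqrt d)\leq h_n/4$,
\[
|x_i - x_{i+1}| \;\leq\; |z_i-z_{i+1}| + 2r_n \;\leq\; h_n/4 + h_n/4 \;<\; h_n.
\]
Applying Step 1 along the chain and summing,
\[
|u_n(x)-u_n(y)| \;\leq\; \sum_{i=0}^{K-1} |u_n(x_i)-u_n(x_{i+1})| \;\leq\; K\,C_1 h_n \;\leq\; C_1\bigl(4|x-y| + 2h_n\bigr),
\]
which gives \eqref{eq:lip} with $C = 4C_1$.

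\medskip

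\noindent\textbf{Anticipated difficulty.} There is no serious obstacle; the argument is essentially a path-and-sum argument that is routine once Step 1 is isolated. The only point requiring care is verifying the uniform lower bound on $w_n(x,y)$ for $|x-y|<h_n$ when $\alpha\neq 0$, which is the sole place where the hypothesis $R_n \leq \tfrac{1}{2}\inf_{\T^d} f$ enters the proof. The condition $r_n\leq h_n/(8\sqrt d)$ is exactly what is needed to make the chain hops lie in the regime $|x_i-x_{i+1}|<h_n$ where Step 1 applies.
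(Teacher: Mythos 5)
Your proof is correct and follows essentially the same strategy as the paper: obtain a one-step bound $|u_n(x)-u_n(y)|\leq Ch_n$ for $|x-y|\leq h_n$ by lower-bounding $w_n(x,y)$ via the hypothesis $R_n\leq\tfrac12\inf_{\T^d}f$, then chain along a short path in $\X_n$. The only (immaterial) difference is how the chain is built — you discretize a geodesic and project onto $\X_n$, whereas the paper partitions the torus into small cubes and uses that each cube contains a point of $\X_n$.
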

\begin{proof}
By Theorem \ref{thm:existunique} there exists  $C>0$  such that 
\begin{equation}\label{eq:liplocal}
w_n(x,y)|u_n(x) -u_n(y)|\leq Ch_n\quad \text{ for all } x,y\in \X_n.
\end{equation}
Recall that (see Eq.~\eqref{eq:weights})
\[w_n(x,y) = d_n(x)^\alpha d_n(y)^\alpha \sigma_n(x,y)\]
where $\sigma_n(x,y)\geq 1$ whenever  $|x -y|\leq h_n$.  By \eqref{eq:Rn} we have $|d_n(x)- f(x)|\leq  R_n$, and so if $R_n \leq \tfrac{1}{2}\inf_{\T^d}f$ we have
\[d_n(x)^\alpha d_n(y)^\alpha \geq \frac{1}{4^{|\alpha|}}\inf_{\T^d}f^{2\alpha}.\]
Therefore
\[w_n(x,y) \geq \frac{1}{4^{|\alpha|}}\inf_{\T^d}f^{2\alpha} \ \ \ \text{whenever} \ \ |x-y|\leq h_n.\]
Combining this with \eqref{eq:liplocal} we have
\begin{equation}\label{eq:b}
|u_n(x) -u_n(y)|\leq Ch_n\quad \text{ for all } x,y\in \X_n\text{  such that }|x -y|\leq h_n. 
\end{equation}
Partition  $\R^d$ into cubes of side lengths  $h_n/2\sqrt{d}$. Assume $r_n \leq h_n/8\sqrt{d} < h_n/4\sqrt{d}$. Then every cube must have at least one point from  $\X_n$. Therefore, for any  $x,y\in \X_n$ there exists a path  $x=x_1,x_2,x_3,\dots,x_\ell =y$ with  $x_i\in \X_n$ and $|x_i -x_{i +1}|\leq h_n$ for all  $i$ and  
\[\ell \leq d\left(   \frac{2\sqrt{d}|x -y|}{h_n} +1\right).\]
Therefore
\[|u_n(x) -u_n(y)|\leq \sum_{i=1}^{\ell-1} |u_n(x_i) -u_n(x_{i  +1})|\leq C\ell h_n.\]
\end{proof}

We recall the definition of viscosity solution for the partial differential equation 
\begin{equation}\label{eq:pde}
\Delta _\infty u + b\cdot \nabla u =0\ \ \text{ in } \Omega,
\end{equation}
 where $\Omega\subset \R^d$  is open, and $b:\Omega \to \R^d$.
\begin{definition}\label{def:viscosity}
We say that  $u\in C(\Omega)$ is a viscosity subsolution of   \eqref{eq:pde} if for every  $x_0\in \Omega$ and $\varphi \in C^\infty(\Omega)$ such that  $u -\varphi $  has a strict local maximum at $x_0$ and $\nabla  \varphi (x_0)\neq  0$ we have 
\[\Delta _\infty \phi(x_0)+ b\cdot \nabla \phi(x_0) \geq 0.\]      
We say that  $u\in C(\Omega)$ is a viscosity supersolution of \eqref{eq:pde} if $ -u$ is a viscosity subsolution of \eqref{eq:pde}.    We say that  $u\in C(\Omega)$ is a viscosity solution of \eqref{eq:pde} if  $u$ is both a viscosity sub- and supersolution of \eqref{eq:pde}. 
\end{definition}
Let  $\pi :\R^d\to \T^d$  be the projection operator.
\begin{definition}\label{def:viscosity2}
A function  $u\in C(\T^d)$  is a viscosity solution of \eqref{eq:infLap2} if $u=g$  on $\O$ and $v(x):=u(\pi (x))$ is a viscosity solution of \eqref{eq:pde} with      
\[\Omega:=\pi ^{ -1}(\T^d\setminus \O) \ \text{ and } \ b(x) = 2\alpha \nabla  \log f(x).\] 
\end{definition}
Uniqueness of viscosity solutions of \eqref{eq:infLap2} follows form the original work of Jensen \cite{jensen1993uniqueness} on uniqueness of viscosity solutions to the $\infty$-Laplace equation. In particular, we refer to Juutinen~\cite{juutinen1998minimization} for an adaptation of Jensen's argument to equations of the form \eqref{eq:infLap2}, which include the spatially dependent term $\nabla \log f\cdot \nabla u$. 

We now give the proof of of Theorems \ref{thm:pinf} and \ref{thm:pinf2}. The proofs are similar, and can be combined together.
\begin{proof}[Proof of Theorems \ref{thm:pinf} and \ref{thm:pinf2}]
Let $p_n:\T^d \to \X_n$ be the closest point projection. That is
\[|x-p_n(x)| = \min_{y\in \X_n}|x -y|.\]
Define  $v_n:\T^d\to \R$ by  $v_n(x)=u_n(p_n(x))$.   
Since  $|x -p_n(x)|\leq r_n$, it follows from Lemma \ref{lem:lip} that for any  $x,y\in \T^d$  
\begin{align*}
|v_n(x) -v_n(y)|&\leq C(|p_n(x) -p_n(y)| +h_n)\\
&=C(|p_n(x) -x +y -p_n(y) +x -y| +h_n)\\
&\leq C(|x -y| +2r_n +h_n).
\end{align*}
Since  $r_n,h_n\to 0$ as  $n\to \infty$, we can use    a variant of the Arzel\`a-Ascoli Theorem (see the appendix in \cite{calder2015PDE}) to show that there exists a subsequence, which we again  denote by  $v_n$, and a Lipschitz continuous function  $u\in C^{0,1}(\T^d)$ such that  $v_n\to u$ uniformly on  $\T^d$ as  $n\to \infty$. Since  $u_n(x)=v_n(x)$ for all  $x\in \X_n$ we have      
\begin{equation}\label{eq:uniform}
\lim_{n\to \infty}\max_{x\in \X_n}|u_n(x) - u(x)|=0.  
\end{equation}
We claim that  $u$ is the unique viscosity solution of \eqref{eq:infLap}. Once this is verified, we can apply the same argument to any subsequence of  $u_n$ to show that the entire sequence converges uniformly to $u$.   

We first show that  $u$ is a viscosity subsolution  of \eqref{eq:infLap}. Let $x_0\in \T^d$ and $\varphi \in C^\infty(\R^d)$  such that  $u -\varphi $  has a strict global maximum at the point $x_0$ and $\nabla  \varphi (x_0)\neq  0$. We need to show that
\[\Delta_\infty \varphi(x_0) + 2\alpha \nabla \log f(x_0)\cdot \varphi(x_0) \geq 0.\]
Assume, by way of contradiction, that 
\[\Delta_\infty \varphi(x_0) + 2\alpha \nabla \log f(x_0)\cdot \varphi(x_0) < 0.\]
By \eqref{eq:uniform} there exists a sequence of points  $x_n\in \X_n$   such that $u_n -\varphi $   attains its global maximum at $x_n$ and  $x_n\to x_0$ as $n\to \infty$.          Therefore 
\[u_n(x_n) -u_n(x)\geq \varphi (x_n) -\varphi (x)\quad \text{  for all }x\in \X_n.\] 
Since  $x_0 \not\in \O$, we have that  $x_n \not\in \O$ for  $n$ sufficiently large.
By Lemma \ref{lem:approximation1} 
\[0=L_nu_n(x_n)\leq L_n\varphi (x_n)\leq H_n\varphi (x_n) +C(r_n^2h_n^{ -1} + |\alpha| R_n h_n)\quad \text{ for all }n\geq 1.\]
By Theorem \ref{thm:consistency} and the assumption  $r_n^2h_n^{ -3}\to 0$ and (if $\alpha\neq 0$) $R_nh_n^{-1} \to 0$ as $n\to \infty$ 
\[0\leq \limsup_{n\to \infty}\frac{1}{h_n^2}H_n\varphi (x_n) +C(r^2_nh_n^{ -3} + |\alpha| R_nh_n^{-1})< 0.\]   
This is a contradiction. Thus $u$ is a viscosity subsolution of \eqref{eq:infLap}. 

To verify the supersolution property, we simply set  $v_n= -u_n$  and note that $L_nv_n= -L_nu_n=0$ and  $v_n\to  -u$  uniformly as $n\to \infty$.   The argument given above for the subsolution property shows that  $ -u$ is a viscosity subsolution, and hence   $u$ is a viscosity supersolution.   
This completes the proof.
\end{proof}

\section{Numerical experiments and applications to learning}
\label{sec:num}

Here, we present numerical experiments and applications to learning theory. 

To solve the graph $\infty$-Laplace equation \eqref{eq:optinf}, we iterate the gradient descent-type scheme
\begin{equation}\label{eq:iteration}
u^{k+1}(x) = u^k(x) + \frac{1}{2M}Lu^k(x),
\end{equation}
where
\[Lu(x):=\max_{x\in \X}w(x,y)(u(y) - u(x)) +  \min_{x\in \X}w(x,y)(u(y) - u(x))\]
is the graph $\infty$-Laplacian, and $M:=\max_{(x,y)}w(x,y)$. The time step $\delta t = 1/(2M)$ is the largest possible while ensuring stability, and is obtained by ensuring the scheme is monotone (increasing in $u^k(x)$ on the right hand side). This is a standard stability (CFL) condition in numerical PDEs, and ensures the maximum principle holds for the scheme.  We set the Dirichlet condition $u(x)=g(x)$ for $x\in \O$ at each step. In the code, we normalize the weight matrix so that $M=1$ and run the iteration \eqref{eq:iteration} until $\max_x|L u^k(x)|< 10^{-5}$. 

We also experimented with the semi-implicit method presented in \cite{flores2019algorithms}, and found the simple iteration \eqref{eq:iteration} was faster for $\alpha>0$. We tried the code from \cite{kyng2015algorithms} and found it ran out of memory on examples with $n=10^4$ vertices that were not extremely sparse on a laptop with $16$GB of RAM, even when calling their Java code from the command line. For all algorithms we tried, the complexity of solving \eqref{eq:optinf} seems to increase with increasing $\alpha$. In the iteration \eqref{eq:iteration} $\alpha$ affects the construction of the weight matrix $w(x,y)$ as per \eqref{eq:weights}. For example, on MNIST, it takes roughly 2.5 seconds to solve \eqref{eq:optinf} with $\alpha=0$ and $8.7$ seconds with $\alpha=0.5$. For MNIST $\alpha=0.5$ is the largest value we used (the results deteriorated for larger $\alpha$).  Our code is implemented in C and is available on the author's website. 

\subsection{Visualizing the learned surface}
\label{sec:surf}

\begin{figure}
\centering
\subfigure[Uniform samples]{\includegraphics[clip=true,trim = 30 20 30 20, width=0.32\textwidth]{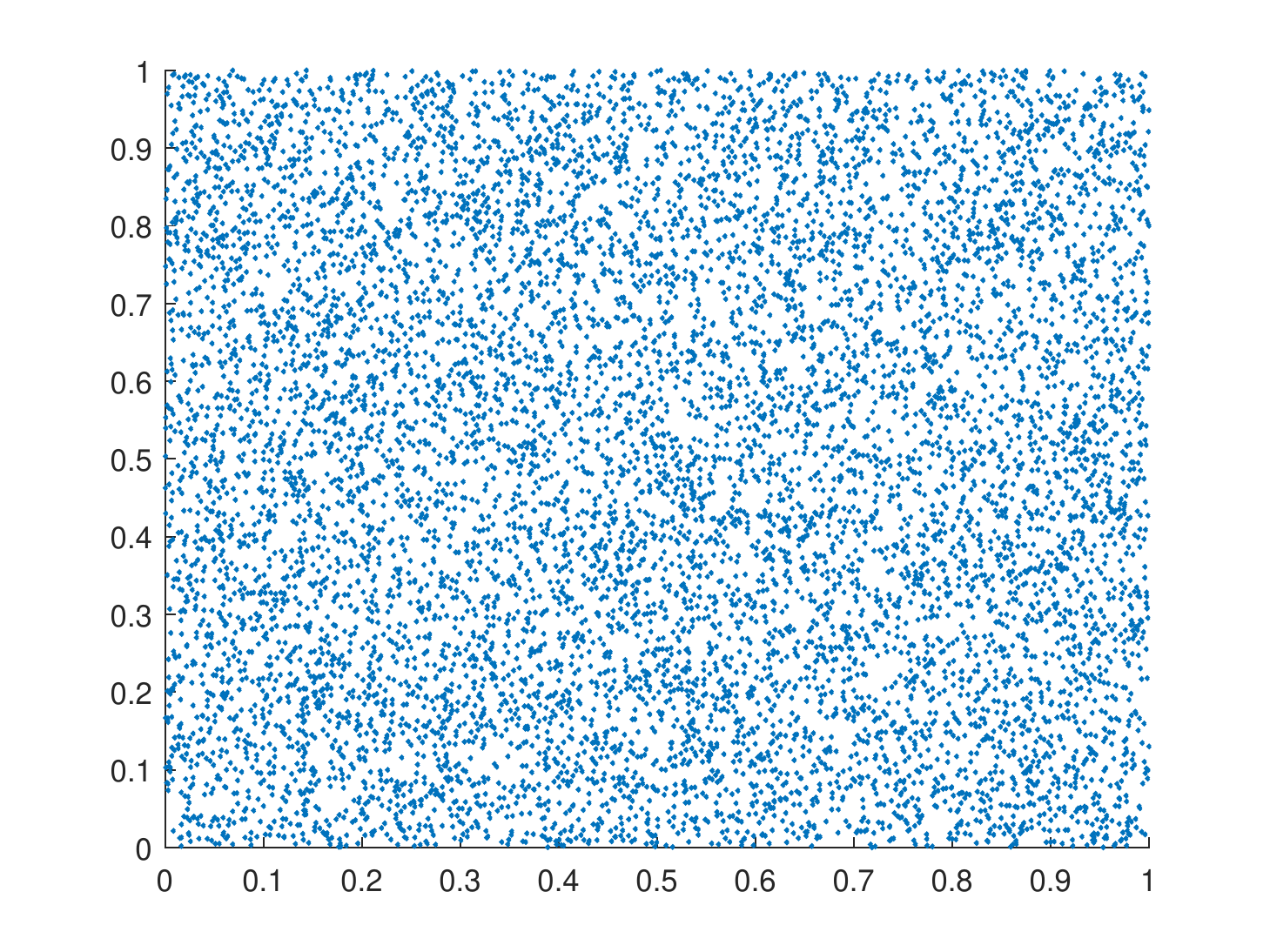}}
\subfigure[Learned function ($\alpha=0$)]{\includegraphics[clip=true,trim = 30 20 30 20, width=0.32\textwidth]{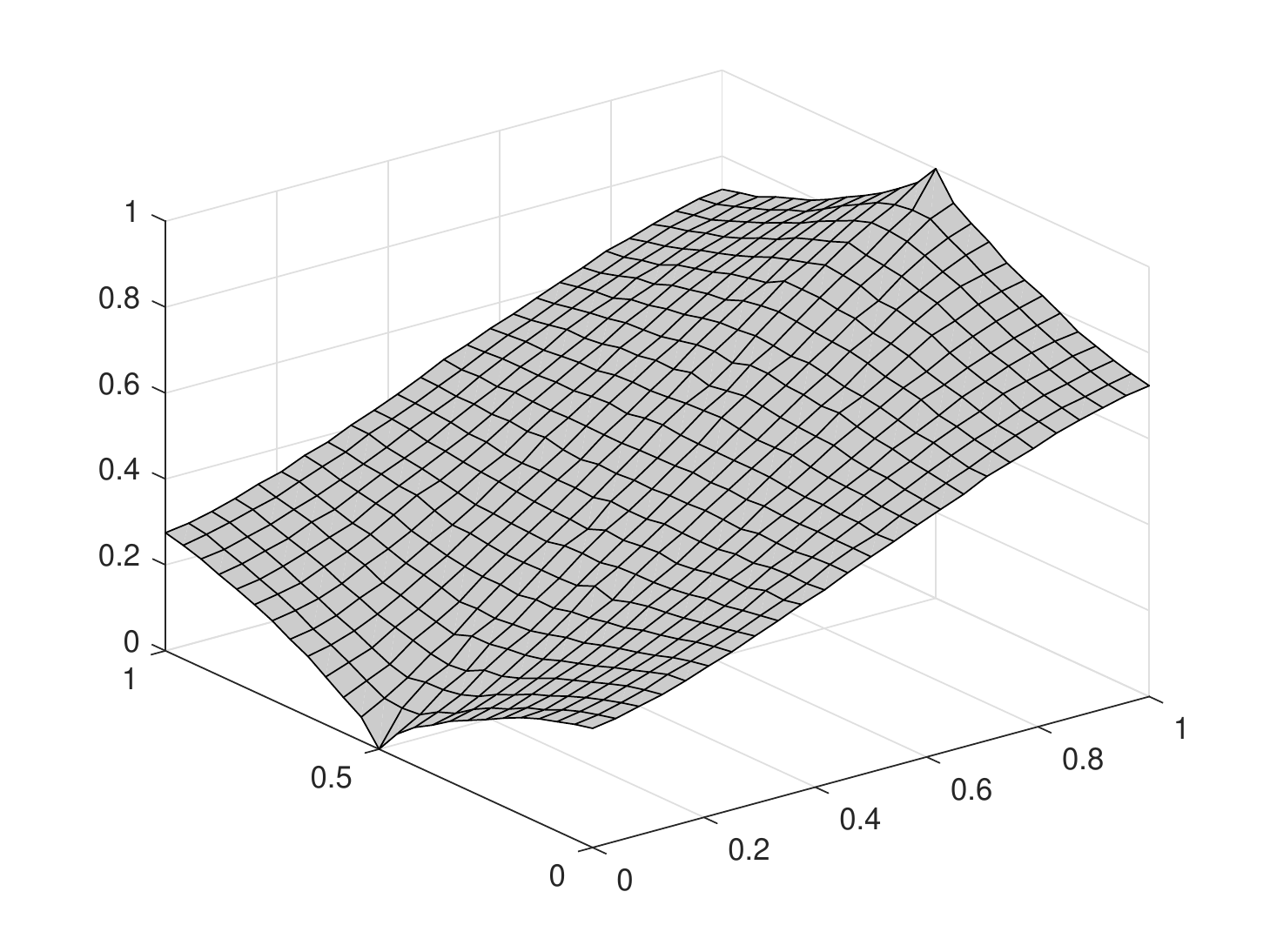}}
\subfigure[Learned function ($\alpha=1$)]{\includegraphics[clip=true,trim = 30 20 30 20, width=0.32\textwidth]{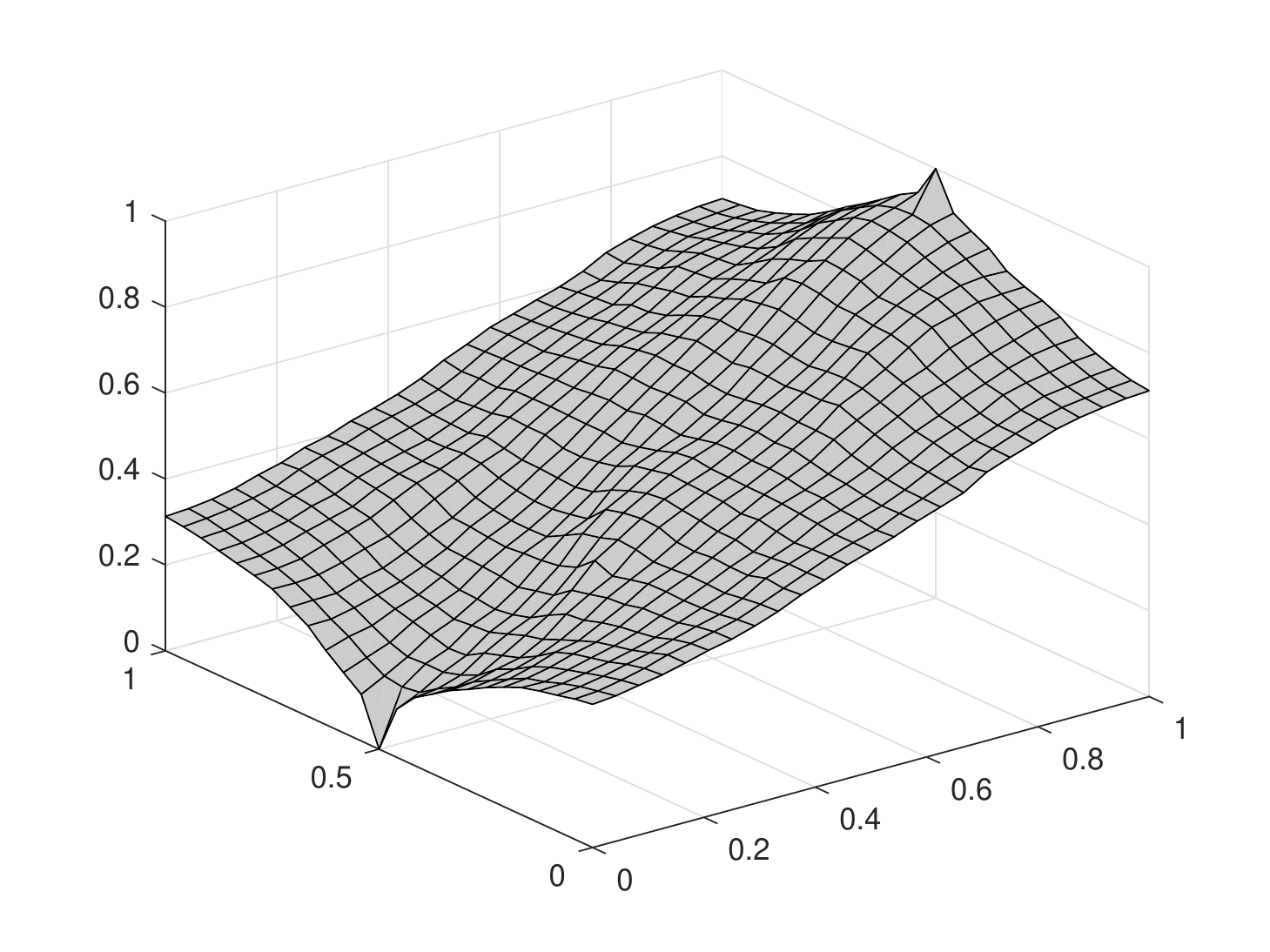}\label{fig:uniformc}}
\caption{Example of Lipschitz learning with self-tuning weights on a graph with uniformly sampled vertices.}
\label{fig:uniform}
\end{figure}

Our first simulation is designed to visualize the affect of the self-tuning weights on the learned function $u_n$. We consider graphs generated by $n=10,000$ random points on the unit box $[0,1]^2$. We choose $h = 0.05$ and the weights are selected to be 
\begin{equation}\label{eq:wei}
\sigma(x,y) = 
\begin{cases}
1,&\text{if }|x-y|\leq h\\
0,&\text{otherwise,}
\end{cases}
\end{equation}
and then $w(x,y)$ is defined as in \eqref{eq:weights}. We assign two labeled points $g(0,0.5)=0$ and $g(1,0.5)=1$, so we have $9,998$ unlabeled points, and $2$ labeled points. In Figure \ref{fig:uniform}, we show the result of Lipschitz learning for the $n=10,000$ \emph{i.i.d.}~random variables uniformly distributed on the box. In this case, the solution of Lipschitz learning does not depend on the parameter $\alpha$, since the distribution of the unlabeled points is constant. We remark the solution in Figure \ref{fig:uniformc} with $\alpha=1$ appears slightly rougher; this is due to the fact that the kernel density estimations $d_n(x)$ contain some random fluctuations, which pass to the weights in the graph. While the fluctuations in the density estimation pass through to the learned function $u_n(x)$, the learned function is still Lipschitz continuous, so it possesses  the same regularity as for $\alpha=0$. To make the surface appear smoother, one could use a larger bandwidth in the kernel density estimator, favoring lower variance and larger bias.

\begin{figure}
\centering
\subfigure[Non-uniform samples]{\includegraphics[clip=true,trim = 30 20 30 20, width=0.32\textwidth]{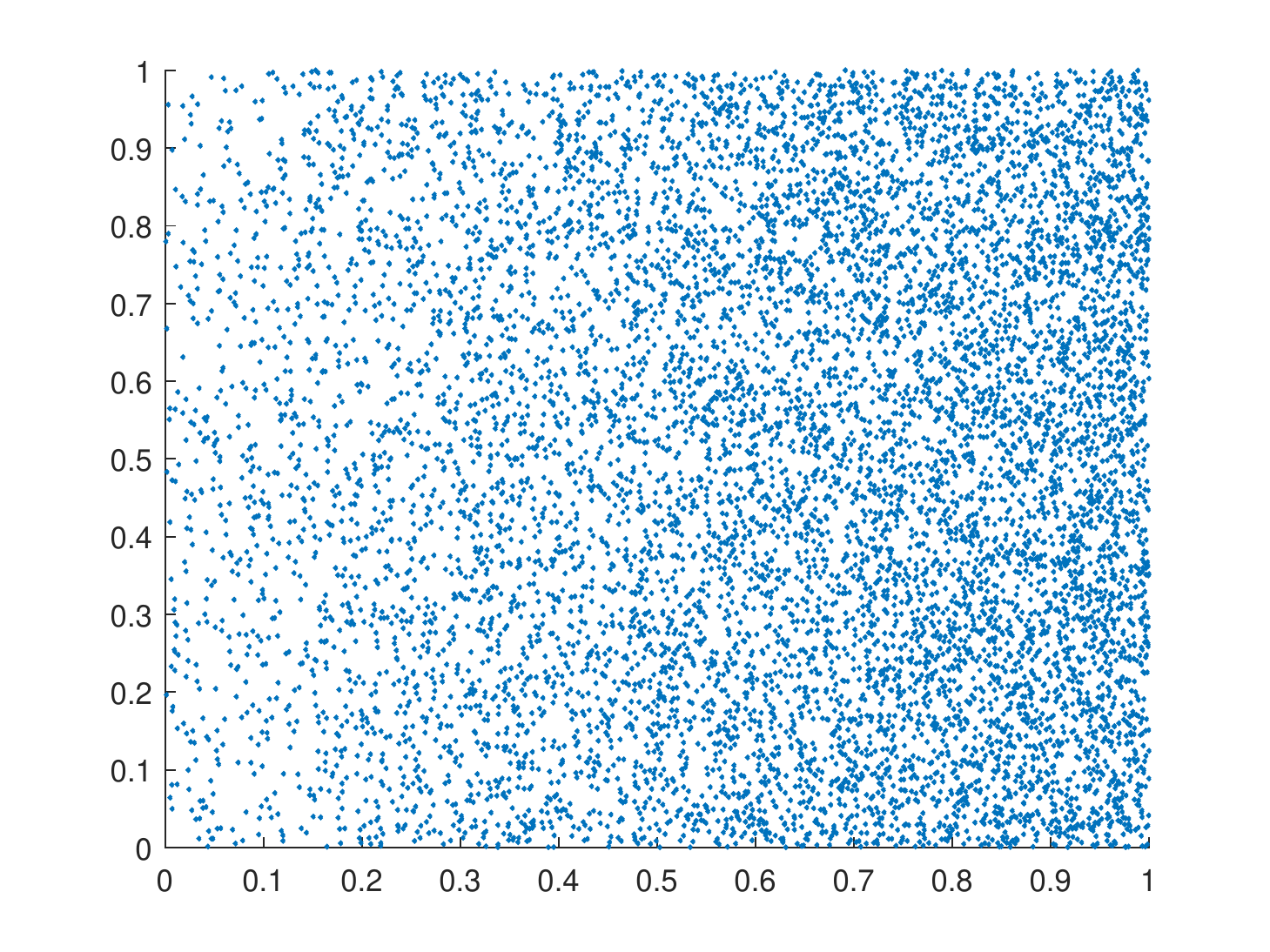}}
\subfigure[Learned function ($\alpha=0$)]{\includegraphics[clip=true,trim = 30 20 30 20, width=0.32\textwidth]{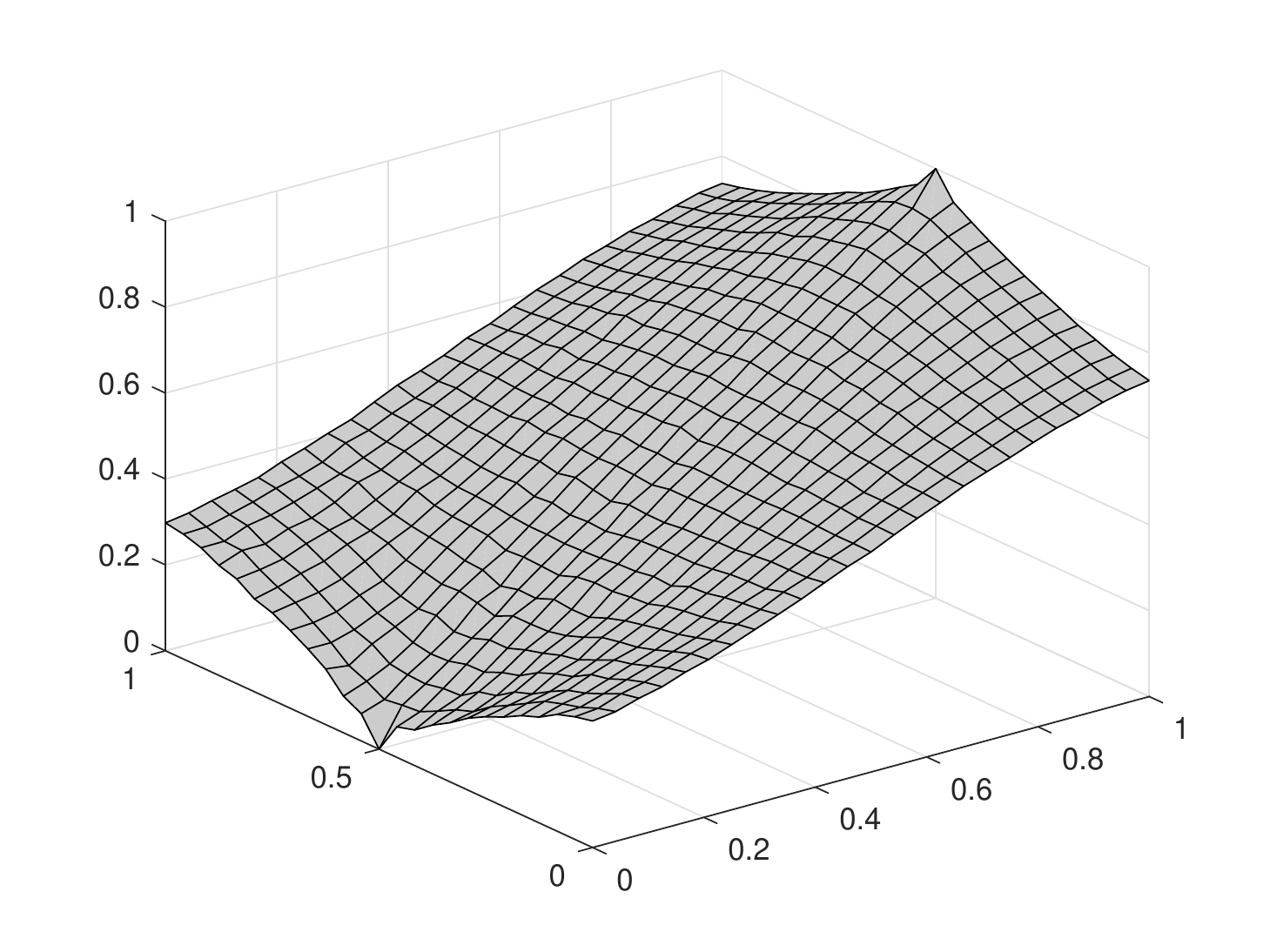}\label{fig:nonuniformb}}
\subfigure[Learned function ($\alpha=1$)]{\includegraphics[clip=true,trim = 30 20 30 20, width=0.32\textwidth]{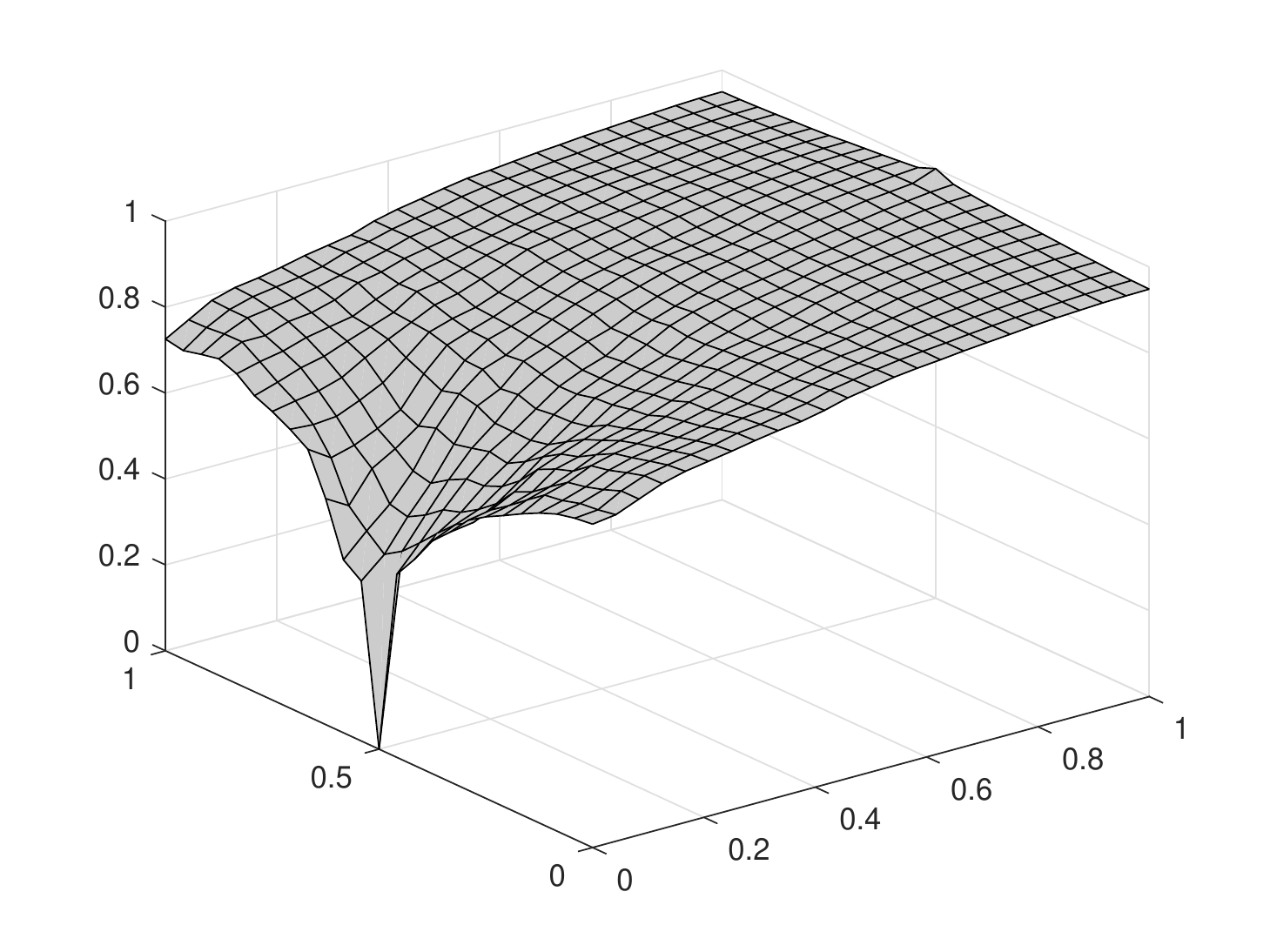}\label{fig:nonuniformd}}
\caption{Example of Lipschitz learning with self-tuning weights on a graph with non-uniformly sampled vertices. }
\label{fig:nonuniform}
\end{figure}

In Figure \ref{fig:nonuniform}, we present the same example with $n=10,000$ \emph{i.i.d.}~random variables drawn from the probability density $f(x) =\tfrac{1}{4} + \tfrac{3}{2}x_1$ on the box $[0,1]^2$. We see in Figure \ref{fig:nonuniformb} that Lipschitz learning without self-tuning weights (i.e., $\alpha=0$) gives roughly the same result as for uniformly distributed data; that is, the algorithm is insensitive to the distribution of the unlabeled data. However, we see in Figure \ref{fig:nonuniformd} that as $\alpha$ is increased, Lipschitz learning with self-tuning weights begins to feel the distribution of the unlabeled data, and places more trust in the label in the denser region. 

\subsection{An analytic example}
\label{sec:analytic}

We now study a one dimensional problem analytically, and show how self-tuning weights improve generalization performance. Due to the high degree of nonlinearity in the $\infty$-Laplace equation \eqref{eq:infLap2}, it is generally impossible to obtain closed form solutions in interesting cases for dimension larger than one. In Section \ref{sec:synth}, we give a numerical study of the higher dimensional version of this classification problem.

We assume our data lies in the interval $\Omega=[-1,1]$.\footnote{It is straightforward to extend the setup to periodic boundary conditions, as in Theorems \ref{thm:pinf} and \ref{thm:pinf2}, but the presentation is more cumbersome.} We assume our unlabeled data has distribution $f$ given by
\begin{equation}\label{eq:fdef}
f(x)=
\begin{cases}
A,&\text{if }\delta \leq |x|\leq 1\\
\mu A,&\text{if } |x|\leq \delta,
\end{cases}
\end{equation}
where $\mu,\delta \in (0,1)$ are parameters, and $A>0$ is chosen so that $\int_{-1}^1 f(x)\, dx =1$, that is,
\[A = \frac{1}{2(\delta\mu + 1-\delta)}.\]
The distribution $f$ has a dip in the region $[-\delta,\delta]$ of relative magnitude $\mu$, indicating the transition region between two labels. A similar example was also considered recently in \cite{calder2018properly}. In particular, we assume the true label function $g:\Omega\to \R$ is
\begin{equation}\label{eq:gdef}
g(x)=
\begin{cases}
1,&\text{if } 0 \leq x\leq 1\\
-1,&\text{if } -1 \leq x < 0.
\end{cases}
\end{equation}
For fixed $x_1,x_2\in [\delta,1]$, we assume we are given exactly two labels 
\begin{equation}\label{eq:given}
g(-x_1) = -1 \ \ \text{ and }\ \ g(x_2) =1.
\end{equation}
The unlabeled data points are sampled independently from the distribution $f$. We construct a graph with self-tuning weights \eqref{eq:weights} with parameter $\alpha$, and solve the Lipschitz learning problem \eqref{eq:optinf} to obtain a classifier on the unlabeled data. We can then compute the classification accuracy as the fraction of unlabeled data points that are labeled correctly according to \eqref{eq:gdef}.

To analyze classification accuracy, and how it depends on $\alpha$ and $\mu$,  we solve the continuum problem \eqref{eq:infLap2} instead of solving the graph-based problem \eqref{eq:optinf}. Theorem \ref{thm:pinf2} guarantees the approximation error will be small for sufficiently many vertices in the graph, so the approximation is justified. For $\alpha\in \R$, let $u_{\alpha}$ denote the solution of the continuum $\infty$-Laplace equation \eqref{eq:infLap2}, which represents the continuum limit of the Lipschitz learning with self-tuning weights. Hence, $u_\alpha$ solves the Euler-Lagrange equation
\begin{equation}\label{eq:uaode}
u_\alpha'' + 2\alpha \frac{f'(x)}{f(x)}u_\alpha'(x) = 0 \ \ \text{ for }x \in (-1,1)\setminus \{-x_1,x_2\},
\end{equation}
subject to $u_\alpha(-x_1)=-1$, $u(x_2)=1$ and the Neumann condition $u_\alpha'(-1)=u_\alpha'(1)=0$. Equivalently, $u_\alpha$ can be characterized as the minimizer (in the absolutely minimal sense) of 
\begin{equation}\label{eq:variational}
E[u]:=\max_{x\in [-1,1]} \{f(x)^{2\alpha}|u'(x)|\}
\end{equation}
among all functions $u:[-1,1]\to \R$ satisfying $u(-x_1)=-1$ and $u(x_2)=1$. 

To solve for $u_\alpha$, we note that away from $x=\pm \delta$, $f'(x)=0$ and so $u_\alpha''(x)=0$ for $x\neq -x_1,x_2$. Hence $u_\alpha$ is linear on the intervals $[-1,-x_1)$, $(-x_1,-\delta)$, $(-\delta,\delta)$, $(\delta,x_2)$ and $(\delta,1]$, and continuous on $[-1,1]$. In the intervals $[-1,-x_1)$ and $(x_2,1]$ the solution must be constant, taking values $-1$ and $+1$ respectively, otherwise the Lipschitz constant can be locally improved by truncating $u_\alpha$ to be constant in these regions. In the other intervals, the quantity $f(x)^{2\alpha}|u'(x)|$ should be equal across the remaining 3 intervals, otherwise we could decrease the energy $E(u)$ by adjusting the slopes in each interval. This yields the solution formula
\begin{equation}\label{eq:ualpha}
u_\alpha(x) = 
\begin{cases}
-1,&\text{if }-1 \leq x \leq -x_1\\
m(x+x_1)-1,&\text{if }-x_1\leq x \leq -\delta\\
\mu^{-2\alpha}m(x+\delta + \mu^{2\alpha}(x_1-\delta))-1,&\text{if }-\delta \leq x \leq \delta\\
m(x-x_2) + 1,&\text{if }\delta \leq x \leq x_2\\
1,&\text{if }x_2 \leq x \leq 1,
\end{cases}
\end{equation}
where
\begin{equation}\label{eq:mdef}
m = \frac{2}{x_1 + x_2 - 2\delta + 2\delta \mu^{-2\alpha}}.
\end{equation}
We note that as $\alpha$ increases, the slope $m$ in the regions $(-x_1,-\delta)$ and $(\delta,x_2)$ decreases, and the slope in the region $(-\delta,\delta)$ increases, allowing a sharper transition between classes (note $0 < \mu < 1$). Figure \ref{fig:ualpha} shows plots of $u_\alpha$ for various values of $\mu$ and $\alpha$.
\begin{figure}
\centering
\subfigure[Varying $\alpha$]{\includegraphics[trim = 35 20 40 20, clip = true, width=0.48\textwidth]{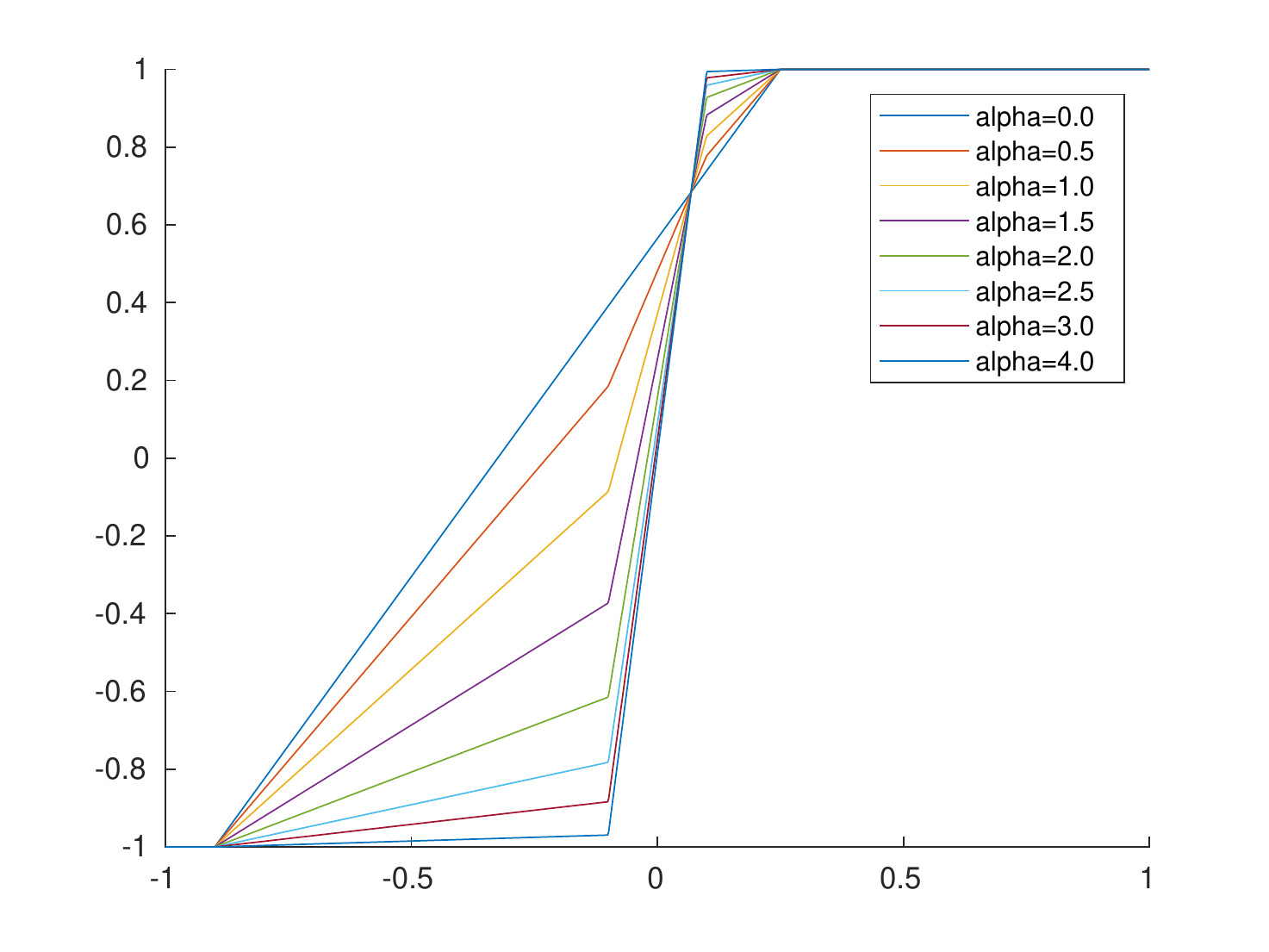}}
\subfigure[Varying $\mu$]{\includegraphics[trim = 35 20 40 20, clip = true, width=0.48\textwidth]{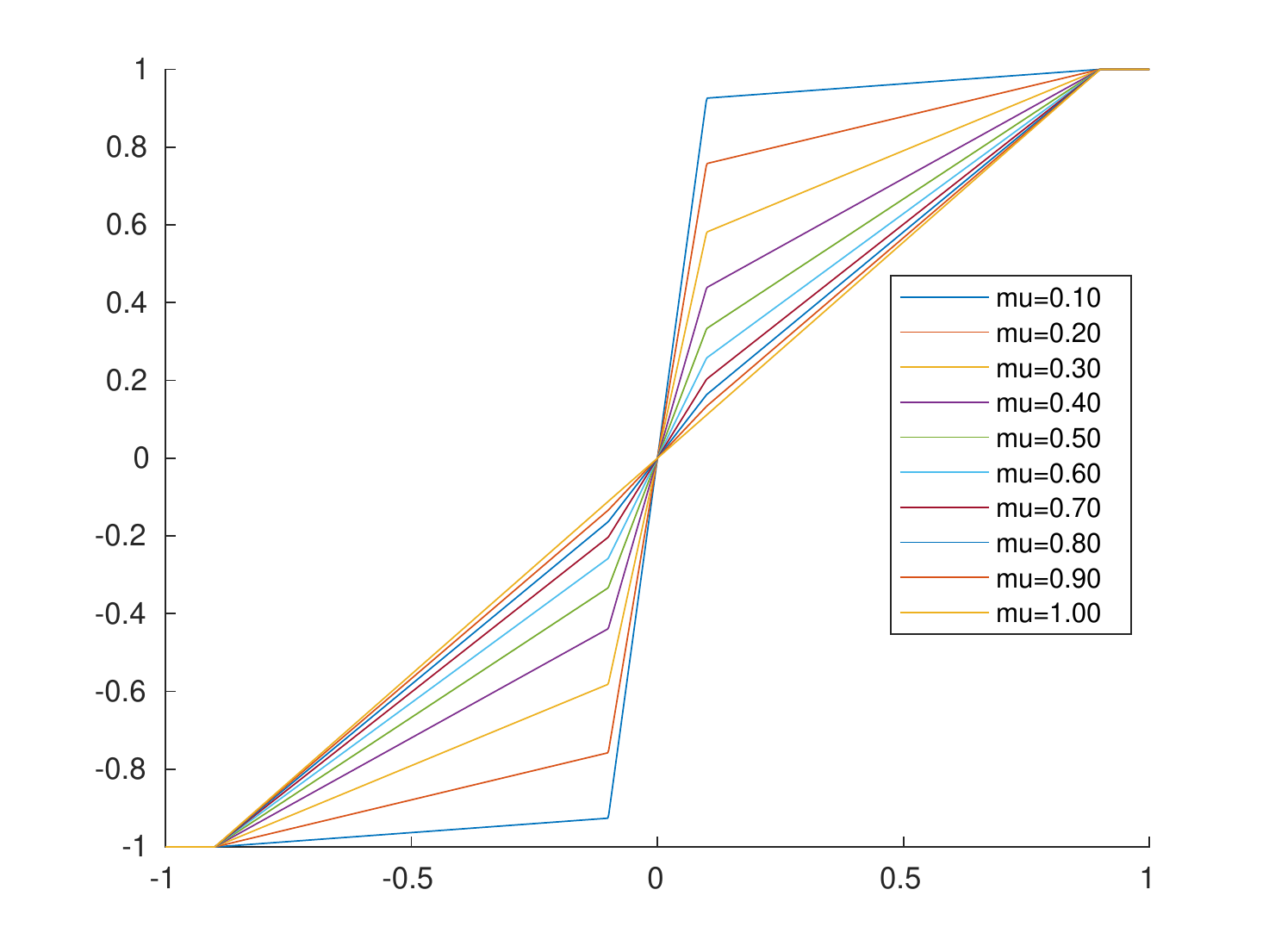}}
\caption{Plots of $u_\alpha$ for (a) varying $\alpha$ and (b) varying $\mu$. In (a) we set $x_1=0.9$, $x_2=0.25$, $\delta=0.1$, and $\mu=0.5$ and in (b) we set $x_1=0.9$, $x_2=0.9$, $\delta=0.1$, and $\alpha=1$. We observe in (a) that as $\alpha$ increases, the learned function $u_\alpha$ becomes more sensitive to the dip in the distribution near $x=0$, and prefers a sharper transition in the less dense region. In (b) we see a similar phenomenon as we change the strength  $\mu$ of the dip in the distribution. }
\label{fig:ualpha}
\end{figure}

We now analyze the classification accuracy of $u_\alpha$. For classification, the points $x$ for which $u_\alpha(x)<0$ are labeled $-1$, and the points $x$ for which $u_\alpha(x) >0$ are labeled $+1$. Thus, the classification accuracy, as a score between $0$ and $1$, is given by
\begin{equation}\label{eq:accuracy}
\text{Accuracy}=\frac{1}{2}\int_{-1}^0 (-u_\alpha(x))_+ \, dx + \frac{1}{2}\int_{0}^1 u_\alpha(x)_+ \, dx,
\end{equation}
where $t_+ := \max\{t,0\}$.  In this case, we can explicitly compute the accuracy.
\begin{proposition}\label{prop:acc}
The accuracy \eqref{eq:accuracy} can be expressed as
\begin{equation}\label{eq:acc}
\text{Accuracy}=
\begin{cases}
1 - \frac{1}{2}\delta - \frac{1}{4}\left( |x_2-x_1| - 2\delta\mu^{-2\alpha} \right),&\text{if }2\delta\mu^{-2\alpha}\leq |x_2-x_1|\\
1 - \frac{1}{4}\mu^{2\alpha}|x_2-x_1|,&\text{if }2\delta\mu^{-2\alpha}\geq |x_2-x_1|.
\end{cases}
\end{equation}
\end{proposition}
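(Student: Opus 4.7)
My plan is to reduce the accuracy computation to the geometry of the sign change of $u_\alpha$. From the piecewise formula \eqref{eq:ualpha}, $u_\alpha$ is non-decreasing on $[-1,1]$: constant at $-1$ on $[-1,-x_1]$, constant at $+1$ on $[x_2,1]$, and piecewise linear with positive slopes $m$, $\mu^{-2\alpha}m$, $m$ on the three intermediate intervals. Hence $u_\alpha$ has a unique zero $x^*\in(-x_1,x_2)$, and since the classifier is $\mathrm{sign}(u_\alpha)$, the correctly classified sets on $[-1,0]$ and $[0,1]$ are $[-1,\min(x^*,0)]$ and $[\max(x^*,0),1]$, of total length $2-|x^*|$. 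Thus \eqref{eq:accuracy} collapses to
\[\text{Accuracy} = 1 - \tfrac{|x^*|}{2},\]
and it remains only to locate $x^*$ in each regime.

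To identify the regime I would evaluate $u_\alpha$ at $\pm\delta$ using \eqref{eq:ualpha}, giving $u_\alpha(-\delta)=m(x_1-\delta)-1$ and $u_\alpha(\delta)=1-m(x_2-\delta)$. Substituting \eqref{eq:mdef} and simplifying, a short algebraic calculation shows that $u_\alpha(-\delta)\leq 0\leq u_\alpha(\delta)$ is equivalent to $|x_2-x_1|\leq 2\delta\mu^{-2\alpha}$, which is exactly the dichotomy in \eqref{eq:acc}. In the regime $|x_2-x_1|\leq 2\delta\mu^{-2\alpha}$, monotonicity forces $x^*\in[-\delta,\delta]$, and solving $u_\alpha(x^*)=0$ on the central piece of \eqref{eq:ualpha} using \eqref{eq:mdef} yields $x^* = \tfrac12\mu^{2\alpha}(x_2-x_1)$, so $|x^*|=\tfrac12\mu^{2\alpha}|x_2-x_1|$. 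In the complementary regime, the problem is symmetric under $x\mapsto -x$ with $x_1\leftrightarrow x_2$, so I may assume $x_2\geq x_1$; then $x^*\in[\delta,x_2]$ satisfies $m(x^*-x_2)+1=0$, giving $x^* = x_2-1/m = \tfrac12(x_2-x_1) - (\mu^{-2\alpha}-1)\delta$.

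Plugging these two values of $|x^*|$ into $\text{Accuracy}=1-|x^*|/2$ recovers the two lines of \eqref{eq:acc}; the branches join continuously at the threshold $|x_2-x_1|=2\delta\mu^{-2\alpha}$, where both evaluate to $1-\delta/2$. I do not anticipate any serious obstacle: \eqref{eq:ualpha} is piecewise linear and explicit, so every step is either a linear-equation solve or an algebraic simplification using the single identity \eqref{eq:mdef}. The one mildly subtle moment is the initial reduction of \eqref{eq:accuracy} to $1-|x^*|/2$, which relies on the monotonicity of $u_\alpha$ together with the identification of the positive-part integrands as indicators of the correctly classified half-intervals under the sign classifier.
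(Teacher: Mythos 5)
Your proposal is correct and follows essentially the same route as the paper: reduce the accuracy to $1-|x^*|/2$ via the monotonicity of $u_\alpha$, split into cases according to whether the zero crossing $x^*$ lies inside or outside $[-\delta,\delta]$ (which is exactly the threshold $|x_2-x_1|\lessgtr 2\delta\mu^{-2\alpha}$), and solve the relevant linear piece of \eqref{eq:ualpha} using \eqref{eq:mdef}. The only cosmetic difference is that you track $|x^*|$ throughout, whereas the paper imposes $x_2\geq x_1$ without loss of generality at the outset; the computations and case split are otherwise identical.
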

\begin{proof}
Let $a=\min\{x_1,x_2\}$ and $b=\max\{x_1,x_2\}$. Without loss of generality, let us assume $b=x_2$. Then
\[\text{Accuracy}=\frac{1}{2} + \frac{1}{2}(1-x^*),\]
where $x^*$ satisfies $u_\alpha(x^*) =0$. If $x^* > \delta$ then $x^*$ satisfies  $m(x^*-b) + 1 =0$, and so
\[x^* = \frac{1}{2}(b-a) + \delta(1-\mu^{-2\alpha}).\]
Since we assumed $x^*>\delta$ we have $2\delta\mu^{-2\alpha}\leq b-a$. In this case, the classification accuracy is
\[\text{Accuracy}=1 - \frac{1}{2}\delta - \frac{1}{4}\left( b-a - 2\delta\mu^{-2\alpha} \right).\]
If $2\delta\mu^{-2\alpha}\geq b-a$ then $x^*\in [-\delta,\delta]$ and we find that
\[x^* = \frac{1}{2}\mu^{2\alpha}(b-a).\]
Thus, in this case
\[\text{Accuracy}=1 - \frac{1}{4}\mu^{2\alpha}(b-a).\]
The proof is completed by noting $b-a=|x_2-x_1|$.
\end{proof}
Proposition \ref{prop:acc} shows that accuracy increases as $\alpha$ increases, and as $\mu$ decreases. We can interpret this in the following way: Increasing $\alpha$ makes the algorithm more sensitive to the distribution of data, and gives a higher preference to placing a decision boundary in the interval $[-\delta,\delta]$ where the distribution dips, while decreasing $\mu$ results in a larger dip in the data distribution, which is easier to detect by the algorithm. 

We also note that since $0 < \mu < 1$, the accuracy converges to $1$ (perfect classification) as $\alpha\to \infty$. On the other hand, if $\alpha=0$ then
\[\text{Accuracy}= 1 - \frac{1}{4}|x_2-x_1|,\]
that is, the algorithm becomes insensitive to the distribution of the unlabeled data.  If $x_2=x_1$, then we always have $\text{Accuracy}=1$, simply due to symmetry in the problem, forcing the zero crossing to $x^*=0$.

\subsection{A synthetic classification example}
\label{sec:synth}

\begin{figure}
\centering
\subfigure[Data]{\includegraphics[trim = 35 20 40 20, clip = true, width=0.48\textwidth]{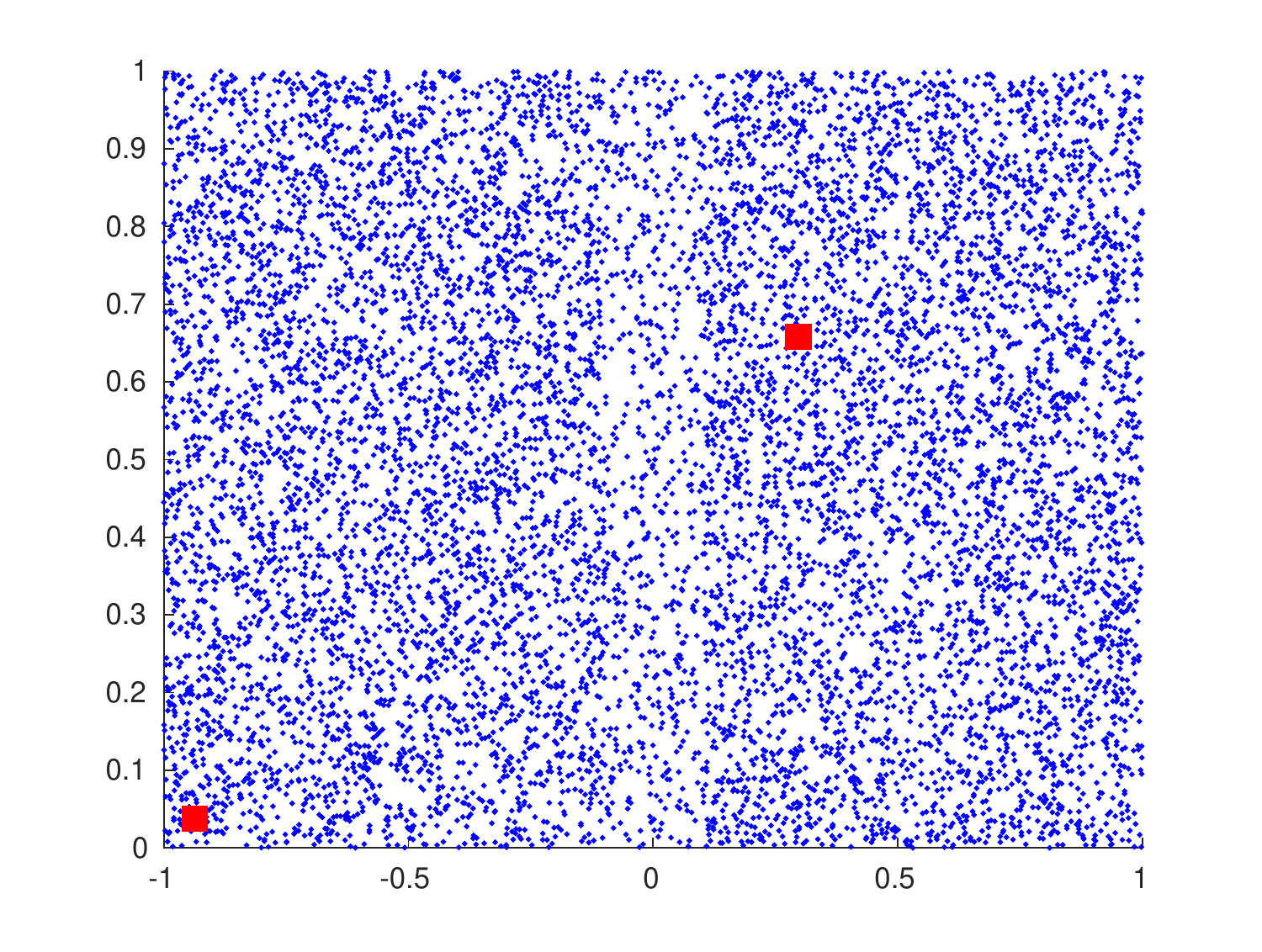}}
\subfigure[$\alpha=0$ $ (81.5\%)$]{\includegraphics[trim = 35 20 40 20, clip = true, width=0.48\textwidth]{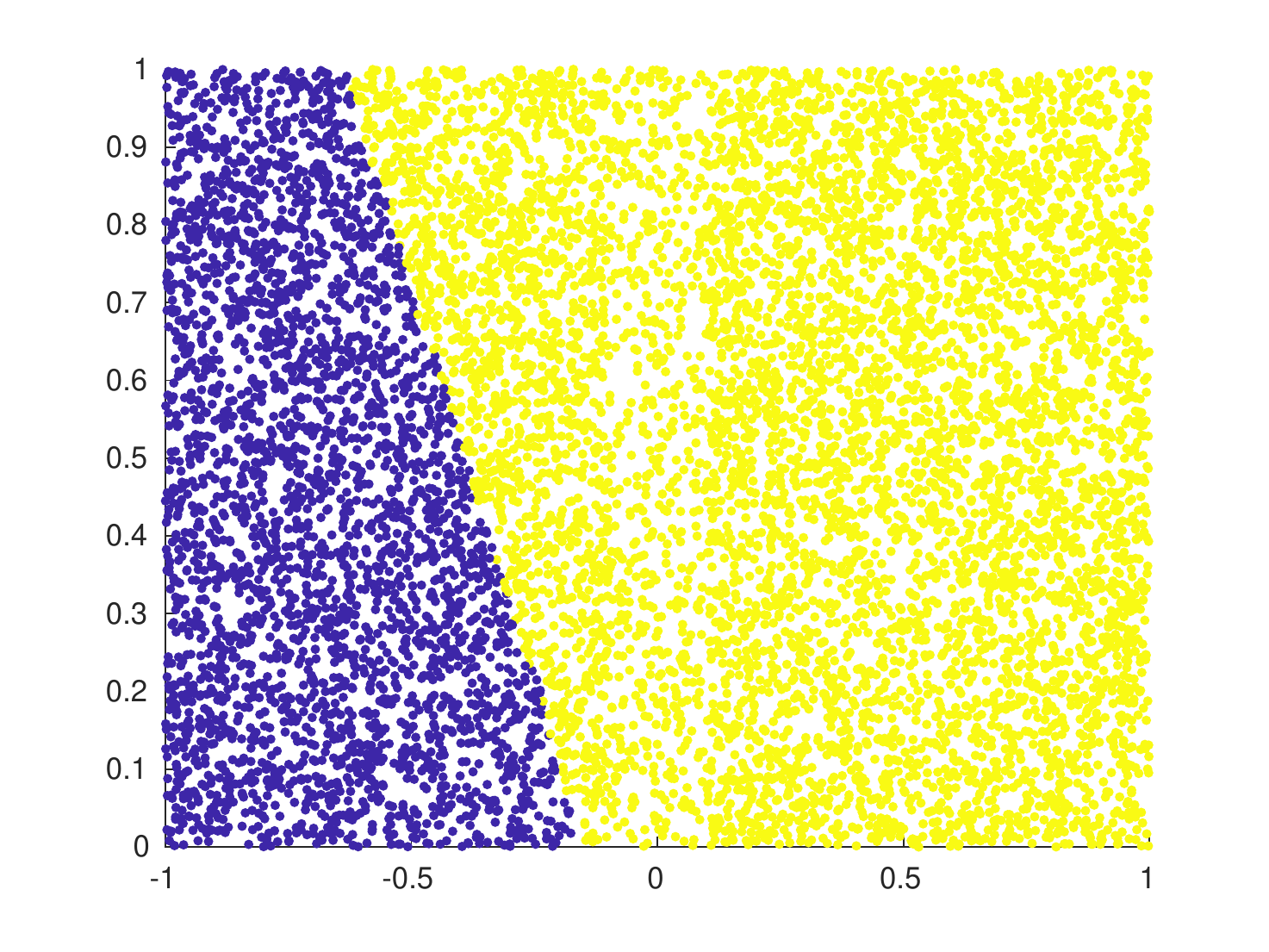}}
\subfigure[$\alpha=1$ $ (93.44\%)$]{\includegraphics[trim = 35 20 40 20, clip = true, width=0.48\textwidth]{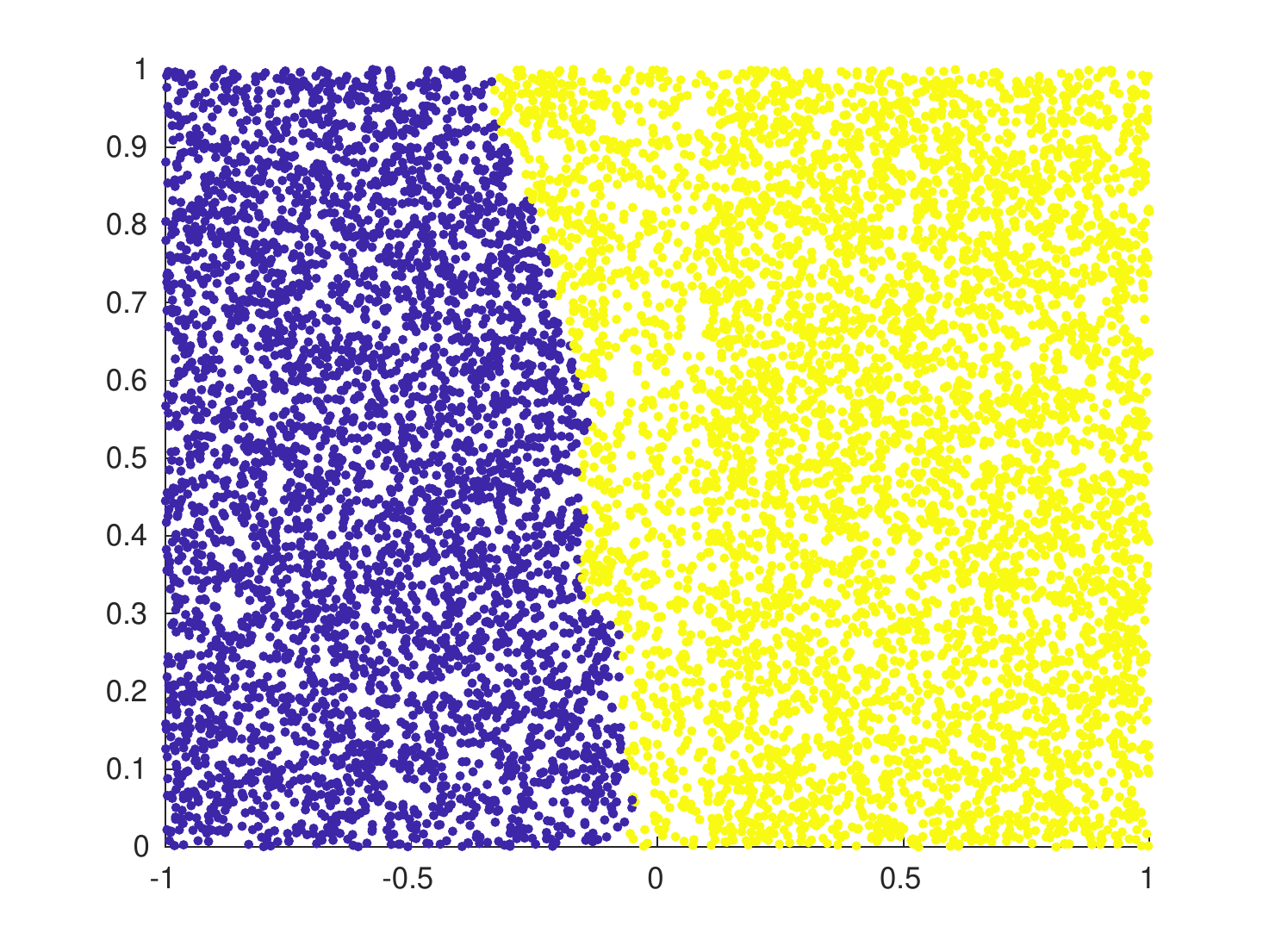}}
\subfigure[$\alpha=2$ $ (98.88\%)$]{\includegraphics[trim = 35 20 40 20, clip = true, width=0.48\textwidth]{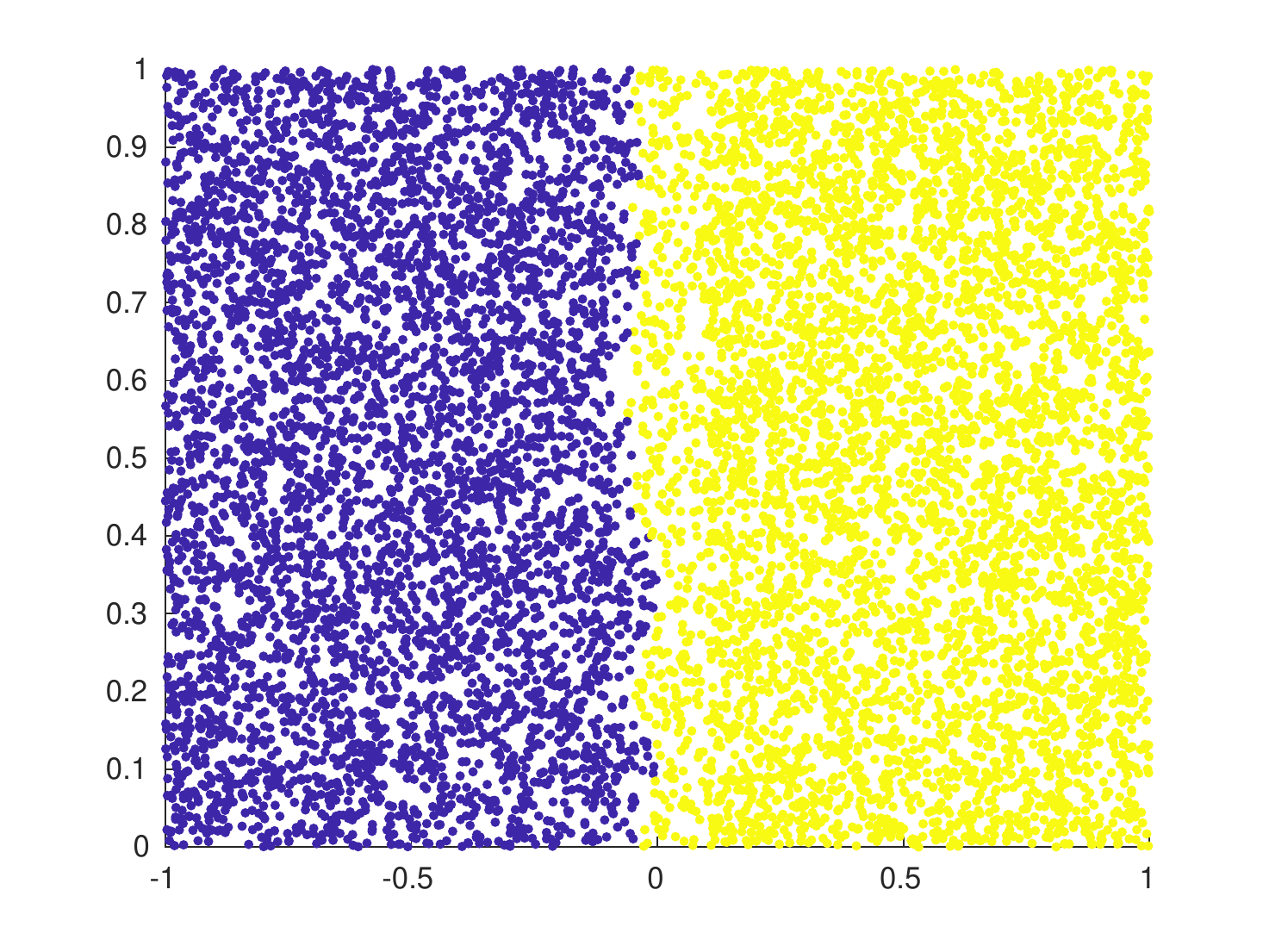}}
\caption{Examples of classification for different $\alpha$. The labels are given at the red points in (a) in this realization. }
\label{fig:syn_examples}
\end{figure}

\begin{figure}
\centering
\subfigure[$\alpha\in (0,2),\mu=0.5$]{\includegraphics[trim = 0 0 30 20, clip = true, width=0.48\textwidth]{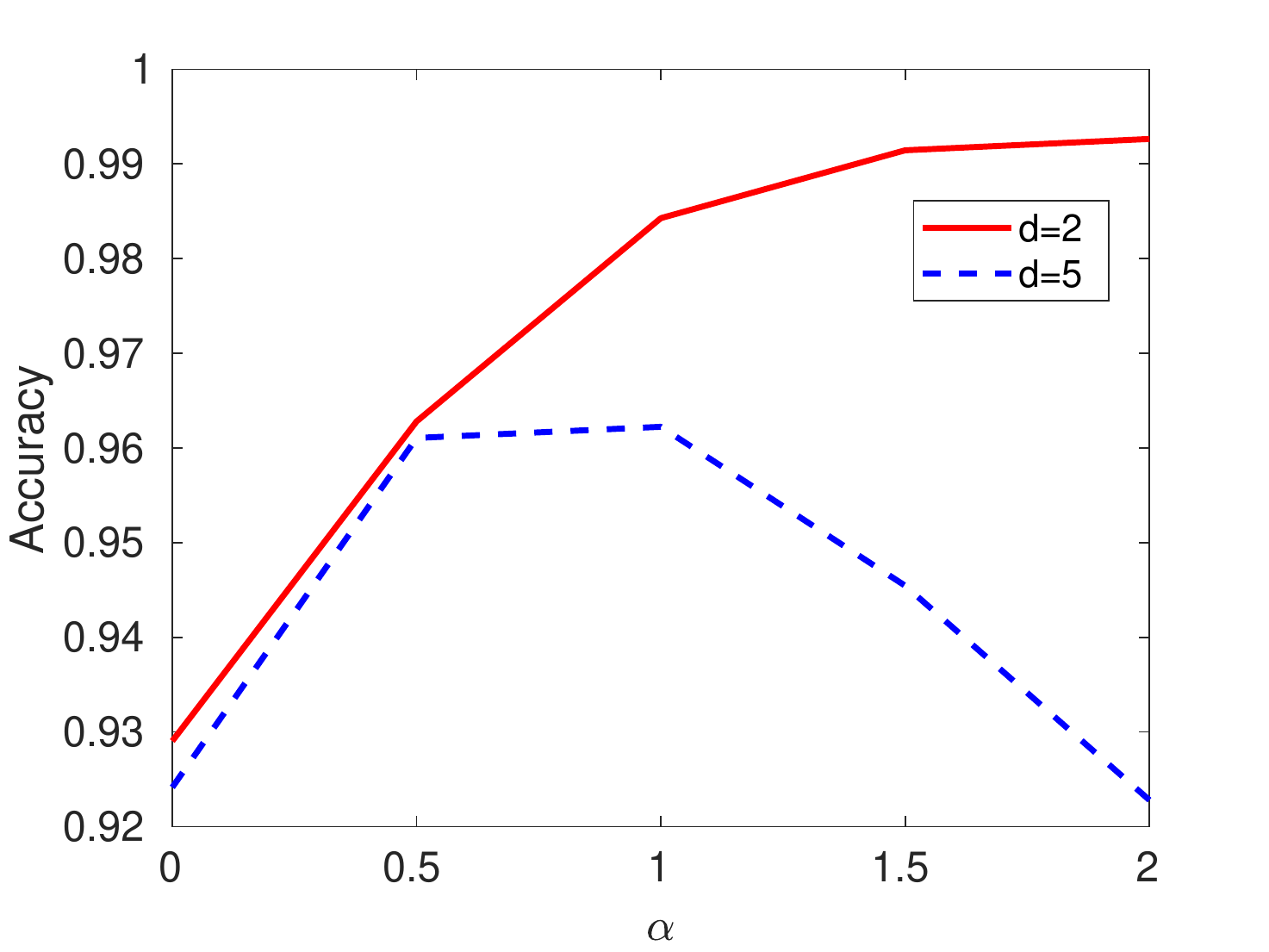}}
\subfigure[$\alpha=1,\mu\in (0.5,0.9)$]{\includegraphics[trim = 0 0 30 20, clip = true, width=0.48\textwidth]{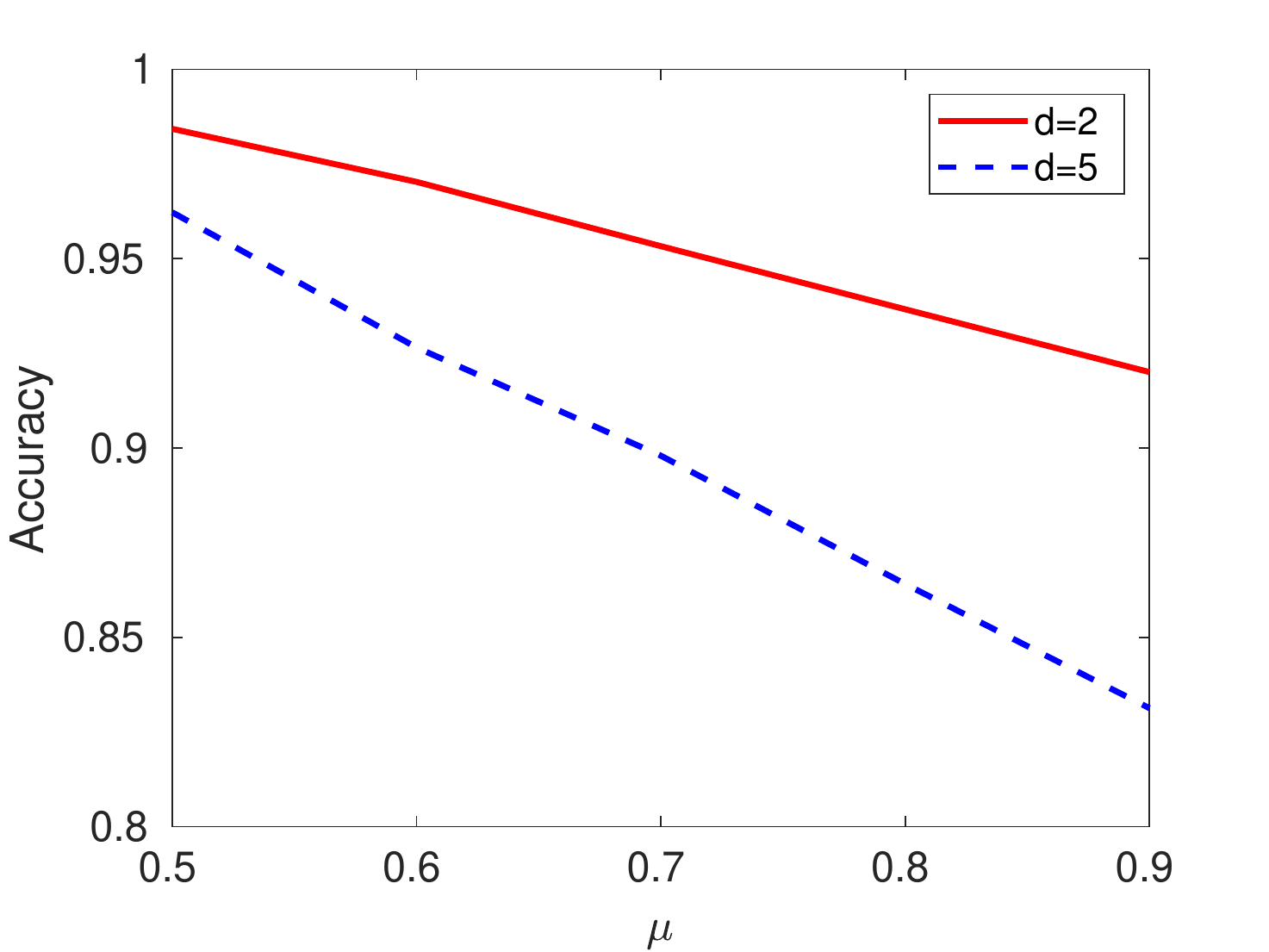}}
\caption{Results of simulations demonstrating how $\alpha$ and $\mu$ affect the accuracy in dimensions $d=2$ and $d=5$. Each experiment is averaged over 100 trials.}
\label{fig:alphamu}
\end{figure}

Here we examine the analytic example from Section \ref{sec:analytic} in higher dimensions. In this case, we cannot solve the PDE \eqref{eq:infLap2} in closed form, so instead we present the results of numerical simulations. 

Our domain is $\Omega=[-1,1]\times [0,1]^{d-1}$. The unlabeled data follows the distribution 
\begin{equation}\label{eq:ulab}
f(x) = 
\begin{cases}
A,&\text{if }\delta \leq |x_1| \leq 1\\
\mu A,&\text{if }|x_1|\leq \delta
\end{cases}
\end{equation}
where $\delta,\mu\in (0,1)$ and $A>0$ is chosen so that $f$ is a probability density. As in Section \ref{sec:analytic}, the distribution $f$ has a dip in density near $x_1=0.5$, which indicates the transition between labels. The true label function is
\begin{equation}\label{eq:gdef2}
g(x)=
\begin{cases}
1,&\text{if } 0 \leq x_1\leq 1\\
-1,&\text{if } -1 \leq x_1 < 0.
\end{cases}
\end{equation}
Our unlabeled data is a sequence $Y_1,\dots,Y_n$ of $n$ \emph{i.i.d.}~random variables with density $f$. For our labeled data we provide exactly two labels $g(-X_1)=-1$ and $g(X_2) = 1$, where $X_1,X_2$ are independent random variables uniformly distributed on $[\delta,1]\times [0,1]^d$. Given the labeled and unlabeled data described above, we generate the graph weights according to \eqref{eq:weights}, for $h,\alpha$ to be specified, and we solve the $\infty$-Laplace learning problem \eqref{eq:optinf}. The learned function is thresholded at $u=0$ to obtain the final classification.

Figure \ref{fig:syn_examples} shows the learned functions for $\alpha=0,1,2$ for a single realization of this experiment. We see that as $\alpha$ is increased, the learned function pays more attention to the distribution and places the decision boundary closer to the region where the distribution dips at $x_1=0.5$. When $\alpha=2$ we get nearly perfect classification. In this example we chose $h=0.05$, $\delta=0.1$ and $\mu=0.5$.

We ran this experiment for different values of $\alpha$ and $\mu$, each time averaging over 100 trials of the experiment. Figure \ref{fig:alphamu} shows the average classification accuracy for the $d=2$ and $d=5$ dimensional cases. For $d=2$ we used $\delta=0.1$ and $h=0.5$, and for $d=5$ we used $h=0.25$ and $\delta=0.2$ in the $d=5$ experiment. In both cases $\alpha$ and $\mu$ are varied between $0$ and $2$, and $0.5$ to $0.9$, respectively. We see in Figure \ref{fig:alphamu} that for $d=2$, accuracy is always increasing with $\alpha$. However, for $d=5$, there is an optimal $\alpha$ (near $\alpha=1$), and performance degrades for larger $\alpha$. This is the situation we expect in practice; when $\alpha$ is too large, the algorithm begins to feel the fluctuations (variance) in the kernel density estimator too much, and is trying to fit noise. We also see in Figure \ref{fig:alphamu} that accuracy decreases with increasing $\mu$, which is to be expected since the dip in the distribution is smaller when $\mu$ is larger, and thus harder to detect.

\subsection{MNIST}
\label{sec:MNIST}

\begin{figure} \begin{center} 
\includegraphics[width = 0.6\textwidth]{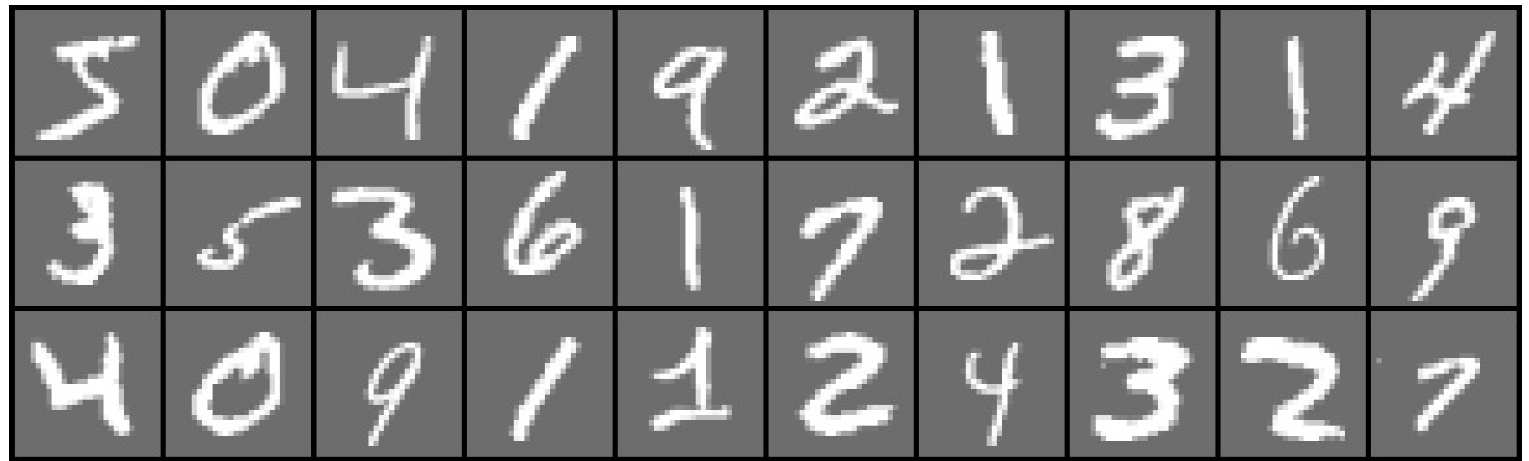}
\caption{Sample handwritten digits from the MNIST dataset. \label{fig:sample_mnist_digits}}
\end{center} \end{figure}

\begin{figure}
\centering
\subfigure[MNIST]{\includegraphics[width=0.48\textwidth,clip=true,trim=10 0 20 15 ]{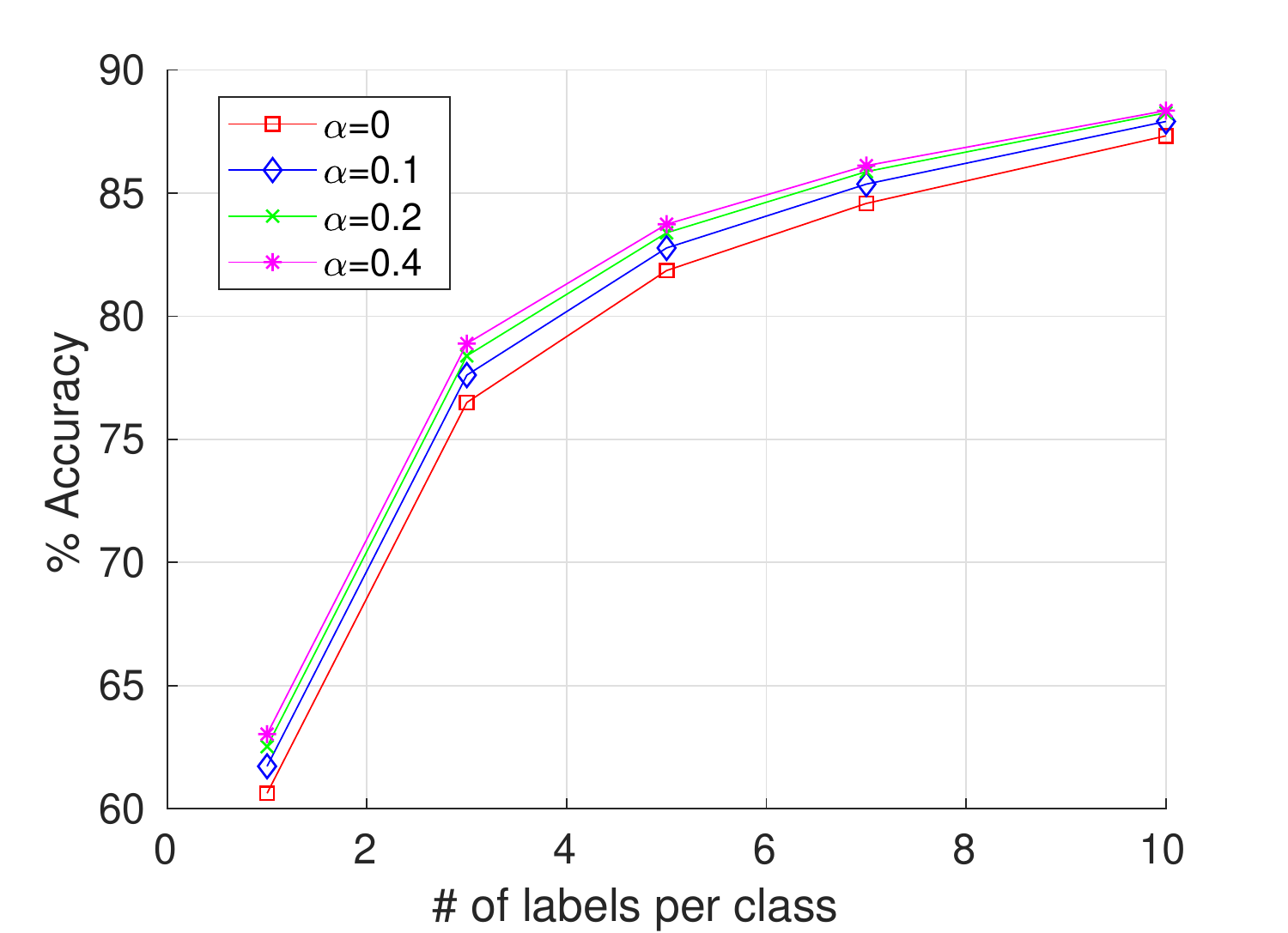}}
\subfigure[MNIST]{\includegraphics[width=0.48\textwidth,clip=true,trim=0 0 20 15 ]{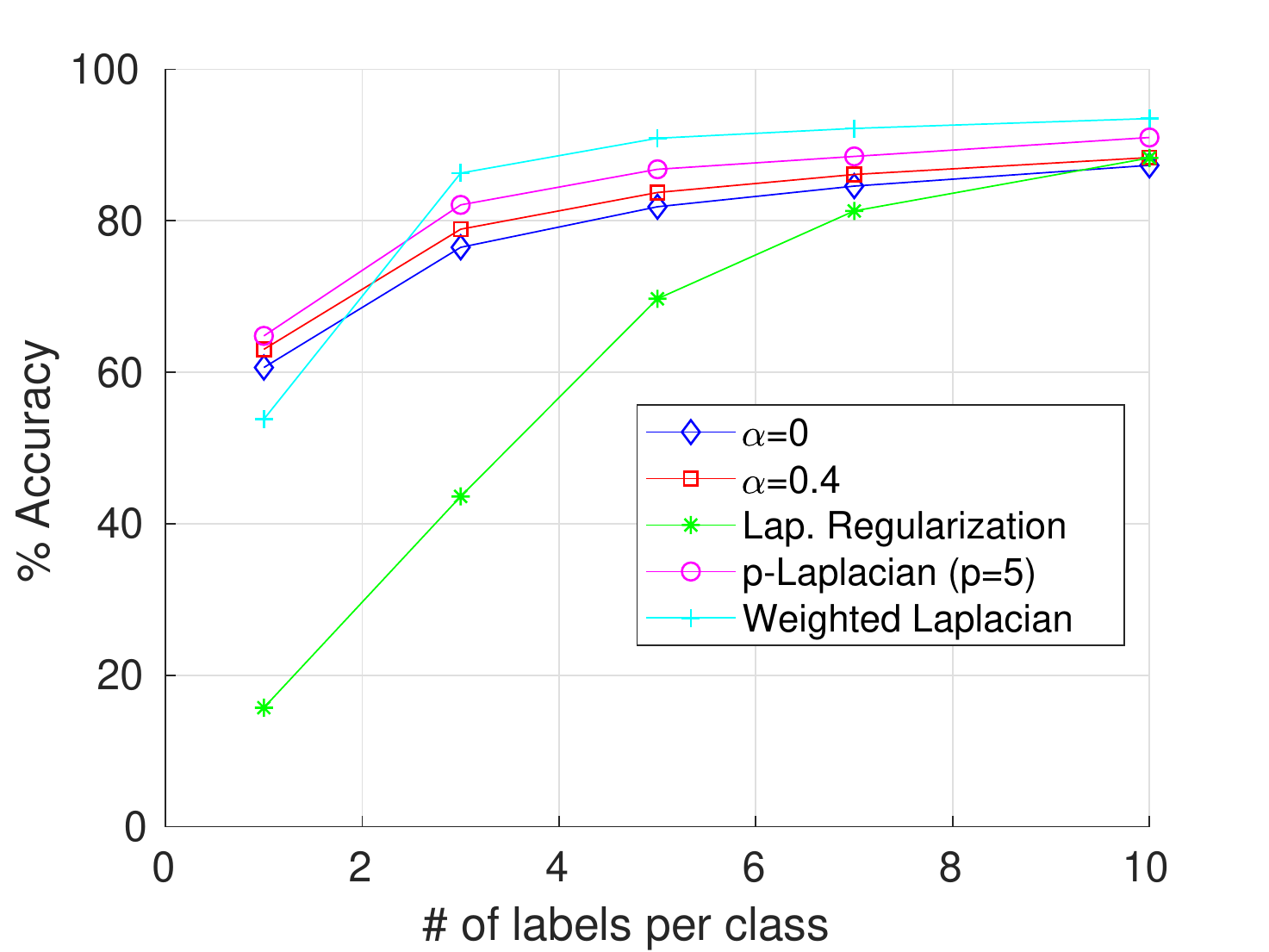}}
\caption{MNIST}
\label{fig:MNIST}
\end{figure}

We now present experiments with the MNIST dataset of handwritten digits \cite{lecun1998gradient}. The dataset consists of $70,000$ $28\times 28$ pixel grayscale images of handwritten digits $0-9$. Figure \ref{fig:sample_mnist_digits} shows an example of the MNIST digits. Our construction of the graph over MNIST is the same as in \cite{calder2018properly}. We connect each image to its nearest $10$ Euclidean neighbors, and assign Gaussian weights with $\sigma$ the distance to the $5^{\rm th}$ nearest neighbor. We then symmetrize the graph by replacing the weight matrix $W$ with $\tfrac{1}{2}(W^T + W)$. The self tuning weights are defined as in Remark \ref{rem:knngraphs} (see \eqref{eq:selfknn}).

In our experiments, we take between $1$ and $10$ labels per digit (so $10$ up to $100$ labels total) chosen at random, and average the accuracy over $100$ trials. To perform multi-class classification, we solve the $\infty$-Laplace equation \eqref{eq:optinf} $10$ times, for each digit versus the rest, giving 10 probabilities for each unlabeled image, and the label is assigned by the digit with maximal probability. The algorithm is standard in semi-supervised learning, and identical to the one used in \cite{calder2018properly,shi2017weighted}. Figure \ref{fig:MNIST}(a) shows the accuracy as a function of number of labels for $\alpha=0,0.1,0,2,0.4$. We see accuracy improves with self-tuning weights. We found no further improvement beyond $\alpha=0.4$. In Figure \ref{fig:MNIST}(b), we compare against recent graph-based algorithms for semi-supervised learning with few labels, including the weighted Laplacian \cite{shi2017weighted}, the game-theoretic $p$-Laplacian with $p=5$ \cite{calder2018game,flores2019algorithms} and classical Laplacian regularization \cite{zhu2003semi}. We see that Lipschitz learning with self-tuning weights is competitive in the regime with very few labels.

\section{Conclusions}
\label{sec:conc}

In this paper, we proved that Lipschitz learning is well-posed in the limit of infinite unlabeled data, and finite labeled data. Furthermore, contrary to current understandings of Lipschitz learning, we showed that the algorithm can be made highly sensitive to the distribution of unlabeled data by choosing self-tuning weights in the construction of the graph. Our results followed by proving the sequence of learned functions converges to the viscosity solution of an $\infty$-Laplace type equation, and then studying properties of that equation. Our results are unique in the context of consistency of graph Laplacians in that they use very minimal probability, which is a feature of the graph $\infty$-Laplacian. In particular, our results hold in both \emph{i.i.d.}~and non-\emph{i.i.d.}~settings. We also presented the results of numerical experiments showing that self-tuning weights improve Lipschitz learning by making it more sensitive to the distribution of unlabeled data.

\appendix

\section{Kernel density estimation review}
\label{app:kd}

We give a brief review of kernel density estimation, to justify the claims in Remark \ref{rem:iid}. The results are standard in the density estimation literature (see \cite{silverman2018density}), though not perhaps in the exact form we need, so we include them for completeness. 

We first state a preliminary proposition that is used in both of the following sections
\begin{proposition}\label{prop:taylor}
Suppose $\Phi$ satisfies \eqref{eq:phi} and $q:\R^d\to \R$ is $C^2$. Then for any $h>0$
\begin{equation}\label{eq:taylorq}
\frac{1}{h^d}\int_{\R^d}\Phi\left( \frac{|y-x|}{h} \right)q(y) \, dy =C_\Phi q(x) + O(h^2),
\end{equation}
where $C_\Phi := \int_{B(0,2)}\Phi(|z|) \, dz$.
\end{proposition}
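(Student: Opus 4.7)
My plan is a direct Taylor expansion after a change of variables, exploiting the radial symmetry of $\Phi(|\cdot|)$ to kill the first-order term.

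First I would substitute $z = (y-x)/h$, so $dy = h^d\,dz$ and the integral becomes
\[
\frac{1}{h^d}\int_{\R^d}\Phi\!\left(\tfrac{|y-x|}{h}\right)q(y)\,dy = \int_{\R^d}\Phi(|z|)\,q(x+hz)\,dz.
\]
Since $\Phi(s)=0$ for $s\geq 2$ by \eqref{eq:phi}, the effective region of integration is $z \in B(0,2)$, a bounded set independent of $h$ and $x$. This boundedness is what keeps the remainder estimates uniform.

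Next, I would apply a second-order Taylor expansion to $q$ at $x$:
\[
q(x+hz) = q(x) + h\,\nabla q(x)\cdot z + R(x,hz),
\]
where $|R(x,hz)|\leq \tfrac{1}{2}\|\nabla^2 q\|_{L^\infty(B(x,2h))}\,h^2|z|^2$. Substituting into the integral splits it into three pieces. The zeroth-order piece gives $q(x)\int_{B(0,2)}\Phi(|z|)\,dz = C_\Phi\,q(x)$, which is the main term. The remainder piece is bounded by
\[
\tfrac{1}{2}\|\nabla^2 q\|_{L^\infty(B(x,2h))}\,h^2\!\int_{B(0,2)}\Phi(|z|)|z|^2\,dz = O(h^2),
\]
since $\Phi$ is continuous (in fact $C^2$) and compactly supported, so the last integral is a finite constant.

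The one step that actually requires an observation, rather than being boilerplate, is the vanishing of the first-order term
\[
h\,\nabla q(x)\cdot\!\int_{B(0,2)}\Phi(|z|)\,z\,dz.
\]
The kernel $z\mapsto \Phi(|z|)$ is radial, hence even in each coordinate $z_i$, so the integrand $\Phi(|z|)\,z_i$ is odd in $z_i$ and integrates to zero over the symmetric ball $B(0,2)$. Thus $\int \Phi(|z|)\,z\,dz=0$ and the first-order contribution drops out entirely (this is also the reason we get $O(h^2)$ rather than $O(h)$). Combining the three pieces yields the claimed expansion.

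I do not expect a genuine obstacle here; the only mild subtlety is ensuring the $O(h^2)$ bound is in a useful form. Since only $y$ with $|y-x|\leq 2h$ contribute, we only need $\nabla^2 q$ on a $2h$-neighborhood of $x$, so the implicit constant in $O(h^2)$ can be taken as $\tfrac{C_\Phi'}{2}\|\nabla^2 q\|_{L^\infty(B(x,2h))}$ with $C_\Phi':=\int_{B(0,2)}\Phi(|z|)|z|^2\,dz$. This is enough for the uses in Remark \ref{rem:iid} and Appendix \ref{app:iid}, where $q$ will be a fixed $C^2$ density and $h\to 0$.
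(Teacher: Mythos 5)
Your proof is correct and follows essentially the same route as the paper's: the change of variables $z=(y-x)/h$, a second-order Taylor expansion of $q$ about $x$, the vanishing of the first-order term by radial symmetry of $\Phi(|\cdot|)$, and a uniform bound on the remainder using the compact support of $\Phi$. You simply spell out the remainder estimate and the implicit constant more explicitly than the paper does.
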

\begin{proof}
Make the change of variables $z = (y-x)/h$ so that $dy = h^d dz$. Then
\begin{align*}
\frac{1}{h^d}\int_{\R^d}\Phi\left( \frac{|y-x|}{h} \right)q(y) \, dy &=\int_{\R^d}\Phi(|z|) q(x + zh)\, dz\\
&=\int_{B(0,2)}\Phi(|z|)\left( q(x) + h\nabla q(x)\cdot z + O(h^2) \right)\, dz\\
&= C_{\Phi}q(x) + O(h^2),
\end{align*}
since the $O(h)$ term is odd. This completes the proof.
\end{proof}

\subsection{The i.i.d.~case}
\label{app:iid}

For the \emph{i.i.d.}~case, our main tool is Bernstein's inequality. For $Y_1,\dots,Y_n$ \emph{i.i.d.}~with mean $\mu=\E[Y_1]$ and variance $\sigma^2 = \E((Y_1-\E[Y_1])^2)$, if $|Y_i|\leq M$ almost surely for all $i$ then Bernstein's inequality \cite{boucheron2013concentration} states that for any $t>0$
\begin{equation}\label{eq:bernstein}
\P\left( \left| \frac{1}{n}\sum_{i=1}^n Y_i - \mu \right|> t\right)\leq 2\exp\left( -\frac{nt^2}{2\sigma^2 + 4Mt/3} \right).
\end{equation}

Let $X_1,\dots,X_n$ be \emph{i.i.d.}~random variables on $\R^d$ with $C^2$ density $\rho$. For simplicity of presentation, we assume $\rho$ is compactly supported, and so $\rho$ is, in particular, bounded. For fixed $y\in \R^d$, the  normalized degree \eqref{eq:deg} (or kernel density estimator) is
\begin{equation}\label{eq:knn}
d_n(y) = \frac{1}{nh^d}\sum_{i=1}^n \Phi\left( \frac{|X_i-y|}{h} \right).
\end{equation}
Here, we apply Bernstein's inequality with $Y_i = \Phi\left( \frac{|X_i-y|}{h} \right)$, and so
\[\mu = \int_{\R^d}\Phi\left( \frac{|x-y|}{h} \right)\rho(x)\, dx\]
and 
\[\sigma^2 \leq \int_{\R^d}\Phi\left( \frac{|x-y|}{h} \right)^2\rho(x)\, dx \leq C\int_{B(x,2h)} \leq C h^d,\]
due to the assumption \eqref{eq:phi}. Applying Bernstein's inequality with $t=\lambda h^{d+1}$ yields
\[\P\left( \left| d_n(y) - h^{-d}\mu \right|> \lambda h\right)\leq 2\exp\left( -cnh^{d+2}\lambda^2 \right).\]
for all $0 \leq \lambda \leq h^{-1}$. By Proposition \ref{prop:taylor} we have
\[h^{-d}\mu = \frac{1}{h^d}\int_{\R^d}\Phi\left( \frac{|x-y|}{h} \right)\rho(x)\, dx=C_\Phi \rho(y) + O(h^2).\]
 Thus, for $f(y)=C_\Phi \rho(y)$ we have
\[\P\left( \left| d_n(y) - f(y) \right|> \lambda h + Ch^2\right)\leq 2\exp\left( -cnh^{d+2}\lambda^2 \right).\]
Hence \eqref{eq:Rn} holds almost surely, that is
\[\lim_{n\to \infty} \frac{|d_n(y) - f(y)|}{h_n}= 0,\]
provided that $h_n\to 0$ as $n\to \infty$ so that $\lim_{n\to \infty}nh_n^{d+2} = \infty$.  

We note that in the application of the result \eqref{eq:Rn}, $y$ is also random ($y\sim X_j$ for some $j$). To handle this, we first condition on $X_j$ and apply the argument above with $n$ replaced by $n-1$. Finally, to control $\max_{x\in \X_n}|d_n(x)-f(x)|$, as in \eqref{eq:Rn}, we union bound over all $n$ random variables in $\X_n$, which produces the extra $\log(n)$ factor in \eqref{eq:hs_inf2}.

\section{The non-\emph{i.i.d.}~case}
\label{app:noniid}

For the non-\emph{i.i.d.}~case, we use Bernstein inequality for $U$-statistics, which we recall now. Let $Y_1,\dots,Y_m$ be a sequence of $m$ \emph{i.i.d}~random variables on $\R^d$ and let $\tau:\R^d\times \R^d\to \R^d$ be a measurable function. The second order $U$-statistic is
\begin{equation}\label{eq:Ustatistic}
U_n =\frac{1}{n} \sum_{i\neq j} \tau(Y_i,Y_j), 
\end{equation}
where $n=m(m-1) = 2\binom{m}{2}$. Let $\mu = \E[\tau(Y_1,Y_2)]$ and $\sigma^2 = \text{Var}(\tau(Y_1,Y_2))$. The Bernstein inequality for $U$-statistics\cite{boucheron2013concentration} states that for all $t>0$
\begin{equation}\label{eq:bernsteinU}
\P(|U_n - \mu | \geq t ) \leq 2\exp\left( \frac{-\lfloor \frac{m}{2}\rfloor t^2}{2\sigma^2 + \frac{2}{3}\|h\|_\infty t} \right).
\end{equation}

We now describe our non-\emph{i.i.d.}~model. We assume the \emph{i.i.d.}~random variables $Y_1,\dots,Y_m$ have a $C^2$ density $\rho$ with compact support in $\R^d$. Let $\tau:\R^d \times \R^d \to \R^d$ be a smooth function with bounded first and second derivatives for which
\[x\mapsto \tau(x,y) \text{ and } y \mapsto \tau(x,y)\]
are invertible for all $(x,y)\in \R^d\times \R^d$. We also assume the Jacobians are bounded; that is, assume there exists $\theta>0$ such that
\begin{equation}\label{eq:jacobians}
|D_x\tau(x,y)|\geq \theta \text{ and } |D_y \tau(x,y)|\geq \theta
\end{equation}
for all $(x,y)\in \R^d\times \R^d$, where $|X|=|\det(X)|$. An example of such a $\tau$ is $\tau(x,y)=x+y$. Finally, our dataset is defined as 
\begin{equation}\label{eq:datanoniid}
X_n = \{\tau(Y_i,Y_j)\}_{i\neq j}. \ \ \ (1\leq i,j\leq m)
\end{equation}
 The dataset $X_n$ is a collection of $n$ identically distributed, but not independent, random variables.

Fix $z\in R^d$, and consider the degree  (kernel density estimator) \eqref{eq:deg} given by
\begin{equation}
d_n(z) = \frac{1}{nh^d}\sum_{x\in X_n}\Phi\left( \frac{|x-z|}{h} \right).
\end{equation}
We note this can be expressed as
\begin{equation}\label{eq:kdest}
d_n(z) = \frac{1}{nh^d}\sum_{i\neq j}\Phi\left( \frac{|\tau(Y_i,Y_j)-z|}{h} \right).
\end{equation}
We apply Bernstein's inequality for $U$-statistics with 
\[h(x,y) = \Phi\left( \frac{|\tau(x,y)-z|}{h} \right).\]
Here, we have
\[\mu = \int_{\R^d}\int_{\R^d} \Phi\left( \frac{|\tau(x,y)-z|}{h} \right)\rho(x) \rho(y)\, dx  dy,\]
and
\begin{align*}
\sigma^2&\leq\int_{\R^d}\int_{\R^d} \Phi\left( \frac{|\tau(x,y)-z|}{h} \right)^2\rho(x) \rho(y)\, dx  dy\\
&\leq C\int_{\R^d}\int_{\R^d} \Phi\left( \frac{|\tau(x,y)-z|}{h} \right)^2\, dx \rho(y) dy\\
&= C\int_{\R^d}\int_{\R^d} \Phi\left( \frac{|w-z|}{h} \right)^2 |D_x\tau(x,y)|^{-1}\, dw \rho(y) dy\\
&\leq C\theta^{-1}\int_{\R^d}\int_{B(z,h)} \Phi\left( \frac{|w-z|}{h} \right)^2\, dw \rho(y) dy\\
&\leq C\theta^{-1}\int_{\R^d}\int_{\R^d} \Phi\left( \frac{|w-z|}{h} \right)^2\, dw\rho(y)  dy\\
&\leq C\theta^{-1}h^d\int_{\R^d}\rho(y)  dy = C\theta^{-1}h^d.
\end{align*}
We will absorb $\theta^{-1}$ into $C$ from now on. Applying Bernstein \eqref{eq:bernsteinU} with $t = \lambda h^{d+1}$ we have
\[\P(|d_n(z) - h^{-d}\mu | \geq \lambda h ) \leq 2\exp\left( -c\sqrt{n} h^{d+2} \lambda^2\right).\]
for $0 < \lambda \leq h^{-1}$.
We now compute
\begin{align*}
h^{-d}\mu&=\frac{1}{h^d}\int_{\R^d}\int_{\R^d}\Phi\left( \frac{|\tau(x,y)-z|}{h} \right)\rho(x) \rho(y)\, dx  dy,\\
&=\int_{\R^d}\left[\frac{1}{h^d}\int_{B(0,2)}\Phi\left( \frac{|w-z|}{h} \right)\rho(\psi_y(w)) |D_x \tau(\psi_y(w),y)|^{-1} \, dw  \right]\rho(y) dy,
\end{align*}
where $\psi_y$ is the inverse of $x\mapsto \tau(x,y)$, that is $w = \tau(\psi_y(w),y)$. Applying Proposition \ref{prop:taylor} with 
\[q(w) = \rho(\psi_y(w)) |D_x \tau(\psi_y(w),y)|^{-1}\]
we have
\begin{align*}
h^{-d}\mu&=\int_{\R^d}\left(C_\Phi\rho(\psi_y(z)) |D_x \tau(\psi_y(z),y)|^{-1} + O(h^2)  \right)\rho(y) dy\\
&=C_\Phi\int_{\R^d}\rho(\psi_y(z)) |D_x \tau(\psi_y(z),y)|^{-1} \rho(y) dy + O(h^2).
\end{align*}
Setting
\[f(z) = C_\Phi\int_{\R^d}\rho(\psi_y(z)) |D_x \tau(\psi_y(z),y)|^{-1} \rho(y) dy \]
we have
\begin{equation}\label{eq:cmea}
\P(|d_n(z) - f(z)| \geq \lambda h + Ch^2 ) \leq 2\exp\left( -c\sqrt{n} h^{d+2} \lambda^2\right).
\end{equation}
Hence \eqref{eq:Rn} holds almost surely, that is
\[\lim_{n\to \infty} \frac{|d_n(z) - f(z)|}{h_n}= 0,\]
provided that $h_n\to 0$ as $n\to \infty$ so that $\lim_{n\to \infty}\sqrt{n}h_n^{d+2} = \infty$. 

As before, in the application \eqref{eq:Rn} $z$ is actually random, and $z\sim \tau(Y_\ell,Y_k)$. To handle this, we condition on both $Y_\ell$ and $Y_k$, and omit all dependent terms from the sum defining $d_n(z)$ in \eqref{eq:kdest}. There are $O(\sqrt{n})$ such terms, so we introduce an error of size $O(\frac{\sqrt{n}}{nh^d})$. Since we are assuming $\lim_{n\to \infty}\sqrt{n}h_n^{d+2} = \infty$, we have $\sqrt{n} \gg h_n^{-(d+2)}$ and so
\[\frac{\sqrt{n}}{nh_n^d} = \frac{1}{\sqrt{n}h_n^{d}} \ll h_n^2.\]
Hence, the omitted terms can be absorbed into the $O(h_n^2)$ error term in \eqref{eq:cmea}. 

Finally, to control $\max_{z\in X_n}|d_n(z)-f(z)|$, as in \eqref{eq:Rn}, we union bound over all $n$ random variables in $X_n$, which produces the extra $\log(n)$ factor present in \eqref{eq:hs_inf4}.

\end{document}